\numberwithin{equation}{section} %
\numberwithin{table}{section}
\newcommand{\ZG}[1]{{\mathfrak {Z}}_{#1}}
\newcommand{\HomCalg}[2]{\operatorname{Hom}_{{\mathbb{C}}\text{-alg}}
({#1},{#2})}
\newcommand{\invHom}[3]{\operatorname{Hom}_{#1}({#2},{#3})}
\newcommand{\ruby}[2]{%
\leavevmode
\setbox0=\hbox{#1}%
\setbox1=\hbox{\tiny #2}%
\ifdim\wd0>\wd1 \dimen0=\wd0 \else \dimen0 =\wd1 \fi
\hbox{%
\kanjiskip=0pt plus 2fil
\xkanjiskip=0pt plus 2fil
\vbox {%
\hbox to \dimen0{%
\tiny \hfil#2\hfil}%
\nointerlineskip
\hbox to \dimen0{\mathstrut\hfil#1\hfil}}}}
  \newtheorem{theorem}{Theorem}[section] %
  \newtheorem{proposition}[theorem]{Proposition} %
  \newtheorem{lemma}[theorem]{Lemma} %
  \newtheorem{corollary}[theorem]{Corollary}%
  \newtheorem{definition}[theorem]{Definition} %
  \newtheorem{example}[theorem]{Example}
  \newtheorem{problem}[theorem]{Problem}
  \newtheorem{remark}[theorem]{Remark}%
\begin{document}
\title{Shintani functions, 
 real spherical manifolds,
 and symmetry breaking operators
}
\author{Toshiyuki KOBAYASHI
\footnote
{Kavli IPMU (WPI) and Graduate School of Mathematical Sciences,
 the University of Tokyo, 
Meguro-ku, Tokyo, 153-8914, Japan, 
E-mail address: toshi@ms.u-tokyo.ac.jp}
}
\date{} %
\maketitle %
\noindent
\begin{abstract}
For a pair of reductive groups $G \supset G'$, 
we prove a geometric criterion
 for the space $\operatorname{Sh}(\lambda, \nu)$
 of Shintani functions 
 to be finite-dimensional
 in the Archimedean case. 
This criterion leads us to a complete classification
 of the symmetric pairs $(G,G')$
 having finite-dimensional Shintani spaces.  
A geometric criterion
 for uniform boundedness
 of $\dim_{\mathbb{C}}{\operatorname{Sh}}(\lambda, \nu)$
 is also obtained.  
Furthermore, 
 we prove that symmetry breaking operators of the restriction
 of smooth admissible representations yield Shintani functions
 of moderate growth,
 of which the dimension is determined
 for $(G, G') = (O(n+1,1), O(n,1))$.
\end{abstract}
{\bf{Keywords:}}\enspace
branching law, reductive group, symmetry breaking, real spherical variety, Shintani function 
\par\noindent
{\bf{MSC 2010;}}\enspace
primary 22E46
;
secondary 
11F70, 
14M27, 
32N15, 
53C35. 

\tableofcontents

\section{Introduction}
\label{sec:intro}
\setcounter{subsection}{0}
The object of this article 
 is to investigate Shintani functions
 for a pair of reductive groups $G \supset G'$
 in the Archimedean case.  
Among others,
 we classify 
 the reductive symmetric pairs $(G,G')$
 such that the Shintani spaces $\operatorname{Sh}(\lambda,\nu)$
  are finite-dimensional
 for all $(\ZG{G}, \ZG{G'})$-infinitesimal character $(\lambda,\nu)$.  
Explicit dimension formulae
 for the Shintani spaces
 of moderate growth are determined
 for the pair $(G,G') = (O(n+1,1), O(n,1))$.

\vskip 1pc
Let $G$ be a real reductive linear Lie group.  
We write ${\mathfrak {g}}$
 for the Lie algebra of $G$, 
 and $U({\mathfrak {g}}_{\mathbb{C}})$
 for the universal enveloping algebra
 of the complexified Lie algebra 
${\mathfrak {g}}_{\mathbb{C}}:={\mathfrak {g}} \otimes_{\mathbb{R}}
{\mathbb{C}}$.

For $X \in {\mathfrak {g}}$
 and $f \in C^{\infty}(G)$, 
we set
\begin{equation}
\label{eqn:LR}
  (L_X f)(g):=\frac{d}{dt}|_{t=0}f(\exp(-tX)g), 
\quad 
   (R_X f)(g):=\frac{d}{dt}|_{t=0}f(g\exp(tX)), 
\end{equation}
 and extend these actions
 to those of $U({\mathfrak {g}}_{\mathbb{C}})$.

We denote by $\ZG {G}$
 the ${\mathbb{C}}$-algebra of $G$-invariant elements
 in $U({\mathfrak {g}}_{\mathbb{C}})$.  
Let ${\mathfrak{j}}$ be a Cartan subalgebra
 of ${\mathfrak {g}}$.  
Then any $\lambda \in {\mathfrak {j}}_{\mathbb{C}}^{\vee}$
 gives rise to a ${\mathbb{C}}$-algebra homomorphism
 $\chi_{\lambda}:\ZG G \to {\mathbb{C}}$
 via the Harish-Chandra isomorphism
 $\ZG G \overset \sim \to S({\mathfrak {j}}_{\mathbb{C}})
  ^{W({\mathfrak {j}}_{\mathbb{C}})}$, 
 where $W({\mathfrak {j}}_{\mathbb{C}})$ is some finite group
 (see Section \ref{subsec:HC}).

Suppose that $G'$ is an algebraic reductive subgroup.  
Analogous notation will be applied to $G'$.  
For instance, 
 $\HomCalg{\ZG{G'}}{{\mathbb{C}}}
 \simeq ({\mathfrak {j}}_{\mathbb{C}}')^{\vee}/
 W({\mathfrak {j}}_{\mathbb{C}}')$, 
 $\chi_{\nu} \leftrightarrow \nu$, 
 where ${\mathfrak {j}}'$ is a Cartan subalgebra
 of the Lie algebra
 ${\mathfrak {g}}'$ of $G'$.

We take a maximal compact subgroup $K$ of $G$
 such that $K':=K \cap G'$
 is a maximal compact subgroup.  
Following Murase--Sugano \cite{murasesugano}, 
we call:
\begin{definition}
[Shintani function]
\label{def:Sh}
{\rm{
We say $f \in C^{\infty}(G)$ is a 
 {\it{Shintani function}}
 of $(\ZG G, \ZG{G'})$-infinitesimal characters
 $(\lambda,\nu)$
 if $f$ satisfies the following three properties:
\begin{enumerate}
\item[{\rm{(1)}}]
$f(k' g k) =f(g)$
 for any $k' \in K'$, $k \in K$.  
\item[{\rm{(2)}}]
$R_u f = \chi_{\lambda}(u)f$
 for any $u \in \ZG G$.  
\item[{\rm{(3)}}]
$L_v f=\chi_{\nu}(v)f$
for any $v \in \ZG {G'}$.  
\end{enumerate}
}}
\end{definition}

We denote by $\operatorname{Sh}(\lambda,\nu)$
 the space of Shintani functions 
 of type $(\lambda,\nu)$.  

For $G=G'$
 and $\lambda=-\nu$, 
Shintani functions are nothing but Harish-Chandra's zonal spherical 
 functions.  

\vskip 1pc
In this article, 
 we provide
 the following three different realizations
 of the Shintani space $\operatorname{Sh} (\lambda,\nu)$:

$\bullet$ \enspace
Matrix coefficients
 of symmetry breaking operators.  
(See Proposition \ref{prop:break}.)  

$\bullet$ \enspace
$(K \times K')$-invariant functions
 on $(G \times G')/\operatorname{diag}G'$.  
(See Lemma \ref{lem:ShPP}.)

$\bullet$ \enspace
$G'$-invariant functions
 on the Riemannian symmetric space
 $(G \times G')/(K \times K')$.  
(See Lemma \ref{lem:ShGK}.)

\vskip 1pc
The first realization constructs
 Shintani functions having moderate growth
 (Definition \ref{def:mod}) from the restriction 
of admissible smooth representations of $G$
 with respect to the subgroup $G'$, 
whereas the second realization relates 
 $\operatorname{Sh} (\lambda,\nu)$
 with the theory of real spherical homogeneous spaces
 which was studied 
 in \cite{xtoshi95, xkeastwood60, xtoshimatsuki, xtoshitoshima}.  
Via the third realization,
 we can apply powerful methods
({\it{e.g.}},  
 \cite{KKMOOT})
 of harmonic analysis 
 on Riemannian symmetric spaces
 for the study of Shintani functions.

By using these ideas,
 we give a characterization
 of the pair $(G,G')$ 
 for which the Shintani space 
$\operatorname{Sh} (\lambda,\nu)$
 is finite-dimensional
 for all $(\lambda,\nu)$:
\begin{theorem}
[see Theorem \ref{thm:dozen}]
\label{thm:A}
The following four conditions
on a pair of real reductive
 algebraic groups $G\supset G'$
 are equivalent:
\begin{enumerate}
\item[{\rm{(i)}}]
{\rm{(Shintani function)}}\enspace
$\operatorname{Sh}(\lambda,\nu)$
 is finite-dimensional for any  
 pair $(\lambda,\nu)$ of $(\ZG G, \ZG{G'})$-infinitesimal characters.
\item[{\rm{(ii)}}]
{\rm{(Symmetry breaking)}}\enspace
$\invHom{G'}{\pi^{\infty}}{\tau^{\infty}}$
 is finite-dimensional 
 for any pair $({\pi^{\infty}},{\tau^{\infty}})$
 of admissible smooth representations
 of $G$ and $G'$
 {\rm{(}}see {\rm{Section \ref{subsec:adm}}}{\rm{)}}.  
\item[{\rm{(iii)}}]
{\rm{(Invariant bilinear form)}}\enspace
There exist at most finitely many 
 linearly independent
 $G'$-invariant bilinear forms
 on $\pi^{\infty} \otimes \tau^{\infty}$
 for any pair $({\pi^{\infty}},{\tau^{\infty}})$
 of admissible smooth representations
 of $G$ and $G'$.  
\item[{\rm{(iv)}}]
{\rm{(Geometric property (PP))}}\enspace
There exist minimal parabolic subgroups
 $P$ and $P'$
 of $G$ and $G'$, 
respectively,
 such that $PP'$ is open in $G$.  
\end{enumerate}
\end{theorem}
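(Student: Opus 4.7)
The plan is to prove the four conditions equivalent by pivoting through condition (iv), establishing (ii)$\Leftrightarrow$(iii), (iii)$\Leftrightarrow$(iv), and (i)$\Leftrightarrow$(iv) separately. The key geometric observation is that a minimal parabolic subgroup of $G\times G'$ has the form $P\times P'$, so (iv) is precisely the statement that the homogeneous space $X:=(G\times G')/\operatorname{diag}G'$ is a real spherical $(G\times G')$-variety.

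The equivalence (ii)$\Leftrightarrow$(iii) is an algebraic duality: a continuous $G'$-invariant bilinear form on $\pi^{\infty}\otimes\tau^{\infty}$ is the same datum as a continuous $G'$-intertwiner $\pi^{\infty}\to(\tau^{\infty})^{\vee}$, and the smooth dual of an admissible smooth representation is admissible smooth, so running over all admissible smooth $\tau^{\infty}$ makes (ii) and (iii) identical statements. The equivalence (iii)$\Leftrightarrow$(iv) is the finite-multiplicity criterion for branching. For (iv)$\Rightarrow$(iii) I would embed $\pi^{\infty},\tau^{\infty}$ into principal series via Casselman, reduce to estimating $G'$-invariant distributions on $G/P\times G'/P'$, and use that (PP) makes $P'\backslash G/P$ finite with a finite-dimensional space of invariant distributions on each orbit. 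For (iii)$\Rightarrow$(iv) I would argue by contraposition, producing degenerate principal series whose tensor product carries unboundedly many $G'$-invariant forms parametrized by the non-open $(P\times P')$-orbits.

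For (i)$\Leftrightarrow$(iv), I would invoke Lemma \ref{lem:ShPP} to identify $\operatorname{Sh}(\lambda,\nu)$ with the $(K\times K')$-invariants in the joint $(\ZG{G}\otimes\ZG{G'})$-eigenspace on $X$. Assuming (iv), $X$ is real spherical, and the general finiteness theorem for joint $Z$-eigenspaces on real spherical manifolds (the methods of \cite{KKMOOT}) bounds $\dim_{\mathbb{C}}\operatorname{Sh}(\lambda,\nu)$. Conversely, if (iv) fails, joint eigendistributions supported on non-open $(P\times P')$-orbits furnish, after $(K\times K')$-averaging, infinite-dimensional $\operatorname{Sh}(\lambda,\nu)$ for suitable $(\lambda,\nu)$. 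As a supplement, Proposition \ref{prop:break} supplies a direct representation-theoretic link by turning $G'$-intertwiners into Shintani functions of moderate growth via matrix coefficients.

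The main obstacle I expect is (iv)$\Rightarrow$(i): one must upgrade the open-orbit multiplicity bound underlying the PP equivalence to \emph{uniform} finite-dimensionality of all joint $\ZG{G}\otimes\ZG{G'}$-eigenspaces restricted to $K\times K'$-invariant smooth functions on $X$, and this rests on the full real-spherical finiteness machinery rather than on a soft orbit count. The other steps are more formal: (ii)$\Leftrightarrow$(iii) is pure duality, and both halves of (iii)$\Leftrightarrow$(iv) follow well-established patterns in distributional branching on generalized flag varieties.
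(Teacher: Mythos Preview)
Your overall architecture is close to the paper's, but two points in your sketch diverge in ways that matter.

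First, the claim that (ii)$\Leftrightarrow$(iii) is ``pure duality'' is only half correct. A $G'$-invariant continuous bilinear form on $\pi^\infty\times\tau^\infty$ yields a continuous $G'$-map $\pi^\infty\to(\tau^\infty)'$ into the \emph{topological} dual, but $(\tau^\infty)'$ is not the Casselman--Wallach globalization $(\tau^\vee)^\infty$; it is a DF-space, not an admissible smooth Fr\'echet representation in the sense of Section~\ref{subsec:adm}. Accordingly the paper (Lemma~\ref{lem:rest}(3)) records only the injection $\invHom{G'}{\pi^\infty}{\tau^\infty}\hookrightarrow\invHom{G'}{\pi^\infty\otimes(\tau^\vee)^\infty}{\mathbb C}$, which gives (iii)$\Rightarrow$(ii) directly; the reverse implication (ii)$\Rightarrow$(iii) is obtained only by closing the cycle through (iv). Since your plan treats (ii)$\Leftrightarrow$(iii) as an independent input, one direction of your cycle is unjustified as stated.

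Second, your route for $\neg$(iv)$\Rightarrow\neg$(i) --- ``eigendistributions supported on non-open $(P\times P')$-orbits, then $(K\times K')$-averaging'' --- is not the paper's mechanism and is not obviously workable: averaging a distribution supported on a flag-variety orbit need not produce a nonzero smooth function on $G$ with the required $\ZG{G}\otimes\ZG{G'}$-eigenproperty. The paper instead argues (Corollary~\ref{cor:Shtriv}) that if (PP) fails there are infinitely many disjoint $P'$-invariant \emph{open} subsets of $G/P$; their characteristic functions lie in $L^2$ and yield, via Proposition~\ref{prop:L}, infinitely many Hilbert symmetry breaking operators between specific spherical principal series, which are then pushed into $\operatorname{Sh}_{\operatorname{mod}}(\rho_{\mathfrak g},\rho_{{\mathfrak g}'})$ by the matrix-coefficient map of Proposition~\ref{prop:break}. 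You mention Proposition~\ref{prop:break} only ``as a supplement,'' but in the paper it is the essential bridge for this implication.

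For (iv)$\Rightarrow$(i) and (iv)$\Rightarrow$(iii) your outline agrees with the paper: both follow from the real-spherical admissibility theorem (Proposition~\ref{prop:HP}, i.e.\ \cite{xtoshitoshima}) applied to $(G\times G')/\operatorname{diag}G'$, combined respectively with Lemma~\ref{lem:ShPP} and Frobenius reciprocity (Propositions~\ref{prop:PPSh} and~\ref{prop:PPtensor}). Your alternative for (iv)$\Rightarrow$(iii) via Casselman embedding and orbit-by-orbit distribution analysis is a legitimate route, but it is essentially the content of the cited Kobayashi--Oshima theorem rather than a shortcut around it.
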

The dimension of the Shintani space $\operatorname{Sh} (\lambda,\nu)$
 depends on $\lambda$ and $\nu$ in general.  
We give a characterization
 of the uniform boundedness property:
\begin{theorem}
\label{thm:B}
The following four conditions
 on a pair of real reductive algebraic groups
 $G \supset G'$
 are equivalent:
\begin{enumerate}
\item[{\rm{(i)}}]
{\rm{(Shintani function)}}\enspace
There exists a constant $C$
 such that 
\[
 \dim_{\mathbb{C}}
 \operatorname{Sh}(\lambda,\nu)
 \le 
 C
\]
 for any pair $(\lambda,\nu)$ of $(\ZG G, \ZG {G'})$-infinitesimal 
 characters. 

\item[{\rm{(ii)}}]
{\rm{(Symmetry breaking)}}\enspace
There exists a constant $C$
 such that
\[
\dim_{\mathbb{C}}
\invHom{G'}{\pi^{\infty}}{\tau^{\infty}}
\le C
\]
 for any pair $({\pi^{\infty}},{\tau^{\infty}})$
 of admissible smooth representations
 of $G$ and $G'$.  
\item[{\rm{(iii)}}]
{\rm{(Invariant bilinear form)}}\enspace
There exists a constant $C$
 such that 
\[
\dim_{\mathbb{C}}
\invHom{G'}{\pi^{\infty} \otimes \tau^{\infty}}
{{\mathbb{C}}}\le C
\]
 for any pair $({\pi^{\infty}},{\tau^{\infty}})$
 of admissible smooth representations
 of $G$ and $G'$.  
\item[{\rm{(iv)}}]
{\rm{(Geometric property (BB))}}\enspace
There exist Borel subgroups
 $B$ and $B'$
 of the complex Lie groups $G_{\mathbb{C}} \supset G_{\mathbb{C}}'$
 with Lie algebras 
 ${\mathfrak {g}}_{\mathbb{C}} \supset {\mathfrak {g}}_{\mathbb{C}}'$, 
 respectively,
 such that $BB'$ is open in $G_{\mathbb{C}}$.  
\end{enumerate}
\end{theorem}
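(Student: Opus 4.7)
The plan is to prove Theorem \ref{thm:B} by running a cycle of implications
$$(\mathrm{iv}) \Rightarrow (\mathrm{iii}) \Leftrightarrow (\mathrm{ii}) \Rightarrow (\mathrm{i}) \Rightarrow (\mathrm{iv}),$$
parallel to the proof of Theorem \ref{thm:A} but with uniform bounds replacing mere finiteness, and with the complex-geometric condition (BB) playing the role of the real condition (PP). Throughout, I would exploit the three realizations of $\operatorname{Sh}(\lambda,\nu)$ listed immediately after Definition \ref{def:Sh}.

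The equivalence (ii) $\Leftrightarrow$ (iii) is formal: the contragredient of an admissible smooth representation is admissible smooth, and
$$\invHom{G'}{\pi^{\infty}}{\tau^{\infty}} \simeq \invHom{G'}{\pi^{\infty} \otimes (\tau^{\vee})^{\infty}}{\mathbb{C}},$$
so a uniform bound on one side transfers to the other with the same constant. For (ii) $\Rightarrow$ (i), I would invoke Proposition \ref{prop:break}: every Shintani function of type $(\lambda,\nu)$ arises from a symmetry breaking operator between admissible smooth representations carrying the infinitesimal characters $(\lambda,\nu)$. By taking $\pi^{\infty}$ and $\tau^{\infty}$ to be minimal principal series with the prescribed infinitesimal characters, a uniform bound $C$ on $\dim\invHom{G'}{\pi^{\infty}}{\tau^{\infty}}$ translates into a uniform bound on $\dim\operatorname{Sh}(\lambda,\nu)$.

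For (iv) $\Rightarrow$ (iii), I would appeal to the Kobayashi--Oshima uniform multiplicity-bound principle developed in \cite{xtoshitoshima}. The condition ``$BB'$ open in $G_{\mathbb{C}}$'' is equivalent to the complex flag variety $G_{\mathbb{C}}/B$ admitting an open $B'$-orbit, which by a standard algebraic argument is in turn equivalent to the finiteness of the number of $B'$-orbits on $G_{\mathbb{C}}/B$. This orbit-finiteness feeds into a holonomic $D$-module argument that produces a uniform constant $C$ bounding $\dim\invHom{G'}{\pi^{\infty}\otimes\tau^{\infty}}{\mathbb{C}}$ over all admissible smooth $\pi^{\infty}$, $\tau^{\infty}$.

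The main obstacle is $(\mathrm{i}) \Rightarrow (\mathrm{iv})$, which I would attack by contraposition. Assuming (BB) fails, I would use the third realization (Lemma \ref{lem:ShGK}) to identify $\operatorname{Sh}(\lambda,\nu)$ with the space of $\operatorname{diag}G'$-invariant joint eigenfunctions, with eigenvalues $(\lambda,\nu)$, of the invariant differential operators on the Riemannian symmetric space $(G \times G')/(K \times K')$. Failure of the open-orbit condition on the complexification should translate, at the level of characteristic varieties, into failure of holonomicity of the associated system of PDEs; a non-holonomic joint eigensystem carries, for suitable eigenvalues, solution spaces of unbounded dimension as $(\lambda,\nu)$ varies. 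The delicate point is to convert this algebraic obstruction into a genuine unboundedness statement for $\dim_{\mathbb{C}}\operatorname{Sh}(\lambda,\nu)$, which I expect to handle via an analytic-continuation argument along holomorphic families of principal series, producing for each $N$ a pair $(\lambda,\nu)$ with $\dim_{\mathbb{C}} \operatorname{Sh}(\lambda,\nu) > N$ and thereby violating (i).
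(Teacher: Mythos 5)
The overall skeleton is reasonable (a cycle of implications, the contragredient trick for (ii) $\Leftrightarrow$ (iii), and leaning on the Kobayashi--Oshima paper \cite{xtoshitoshima} for the direction (iv) $\Rightarrow$ (iii)), but two of the four arrows as you argue them have genuine gaps.

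\textbf{(ii) $\Rightarrow$ (i).} You assert that Proposition \ref{prop:break} shows ``every Shintani function of type $(\lambda,\nu)$ arises from a symmetry breaking operator,'' and then deduce an upper bound on $\dim\operatorname{Sh}(\lambda,\nu)$. This reverses the content of the proposition: Proposition \ref{prop:break} produces a map $\invHom{G'}{\pi^{\infty}}{\tau^{\infty}} \to \operatorname{Sh}_{\operatorname{mod}}(\lambda,\nu)$, injective under cyclicity hypotheses. That is an injection \emph{into} $\operatorname{Sh}_{\operatorname{mod}}$, hence a \emph{lower} bound on $\dim \operatorname{Sh}_{\operatorname{mod}}(\lambda,\nu)$ --- exactly the wrong direction for (ii) $\Rightarrow$ (i). Even the bijectivity statement, Theorem \ref{thm:ps}~(2), only identifies $\operatorname{Sh}_{\operatorname{mod}}(\lambda,\nu)$ (not all of $\operatorname{Sh}(\lambda,\nu)$) with a space of symmetry breaking operators, and only for classical groups. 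The paper in fact flags the gap between $\operatorname{Sh}$ and $\operatorname{Sh}_{\operatorname{mod}}$ as an open question (Problem \ref{prob:AB}). The correct route for an \emph{upper} bound on all of $\operatorname{Sh}(\lambda,\nu)$ is the boundary-value machinery of Section \ref{sec:PS}: via Lemma \ref{lem:ShGK} one realizes Shintani functions as $\operatorname{diag}G'$-invariant joint eigenfunctions on $(G\times G')/(K\times K')$, passes to hyperfunction boundary values on $(G\times G')/(P\times P')$ as in the proof of Theorem \ref{thm:nonzero}, and then applies the uniform multiplicity bound for the complex-spherical homogeneous space $(G_{\mathbb{C}}\times G'_{\mathbb{C}})/\operatorname{diag}G'_{\mathbb{C}}$ from \cite{xtoshitoshima}. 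Equivalently, this is a direct (iv) $\Rightarrow$ (i), not an (ii) $\Rightarrow$ (i).

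\textbf{(i) $\Rightarrow$ (iv).} Your contrapositive argument appeals to ``failure of holonomicity'' of the joint eigensystem $\mathcal{M}_{(\lambda,\nu)}$ producing solution spaces of unbounded dimension. This does not describe the relevant regime. The interesting case here is $\text{(PP)}$ true but $\text{(BB)}$ false: then, by Proposition \ref{prop:HP}, the $\operatorname{diag}G'$-invariant solution space is \emph{finite}-dimensional for each fixed $(\lambda,\nu)$, and the system at the boundary has regular singularities. There is no ``non-holonomicity'' to exploit; what fails is uniformity of the finite dimensions as $(\lambda,\nu)$ varies. The mechanism is therefore constructive: when $\text{(BB)}$ fails, one produces families of spherical principal series for which $\dim\invHom{G'}{\pi^\infty}{\tau^\infty}$ grows without bound (the unbounded-multiplicity side of \cite{xtoshitoshima}, in the spirit of Proposition \ref{prop:L} and Corollary \ref{cor:6.9}, but relative to Borel subgroups of $G_{\mathbb{C}}$, $G'_{\mathbb{C}}$ rather than real minimal parabolics), and then converts these into Shintani functions via Proposition \ref{prop:break} --- which, used in this lower-bound direction, is exactly what it is designed for. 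Your PDE/characteristic-variety heuristic would be more appropriate to the $\text{(PP)}$-fails case, i.e., to Theorem \ref{thm:A}, not Theorem \ref{thm:B}.
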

By using the geometric criterion (PP), 
we give a complete classification
 of the reductive symmetric pairs
 $(G,G')$
 for which one of 
 (therefore any of)
the equivalent conditions
 in Theorem \ref{thm:A}
 is fulfilled.  
See Theorem \ref{thm:PP}
 for the classification.  
Among them,
 those satisfying the uniform boundedness property
 in Theorem \ref{thm:B}
 are listed in Theorem \ref{thm:BB}.  
\begin{example}
[see Theorems \ref{thm:PP} and \ref{thm:BB}]
\label{ex:PPBB}
{\rm{
\begin{enumerate}
\item[1)]
If $(G,G')$ is 
\begin{alignat*}{2}
&(GL(n+1,{\mathbb{C}}),GL(n,{\mathbb{C}})\times GL(1,{\mathbb{C}}))
\qquad
&&(n \ge 1),
\\
&(O(n+1,{\mathbb{C}}),O(n,{\mathbb{C}}))
\qquad
&&(n \ge 1),
\end{alignat*}
or any real form
 of them,
 then we have
\begin{equation}
\label{eqn:Shbdd}
  \sup_{\lambda} \sup_{\nu}
  \dim_{{\mathbb{C}}}
  \operatorname{Sh}(\lambda,\nu)
   <\infty.  
\end{equation}
\item[2)]
If $(G,G')$ is 
\begin{equation*}
(Sp(n+1,{\mathbb{C}}),Sp(n,{\mathbb{C}}) \times Sp(1,{\mathbb{C}}))
\qquad
(n \ge 2),
\end{equation*}
or its split real form, 
 then $\operatorname{Sh}(\rho_{\mathfrak {g}},\rho_{\mathfrak {g}'})$
 is infinite-dimensional
 (see \eqref{eqn:rhog} for the notation).  
On the other hand,
 if $(G,G')$ is a non-split real form, 
 then $\operatorname{Sh}(\lambda,\nu)$
 is finite-dimensional 
 for all $(\lambda,\nu)$,
 but the dimension is not uniformly bounded, 
 namely, 
 \eqref{eqn:Shbdd} fails.  

\item[3)]
If $(G,G')$ is 
\[
 (GL(n+1,{\mathbb{H}}), 
 GL(n,{\mathbb{H}}) \times GL(1,{\mathbb{H}}))
\quad
 (n \ge 1), 
\]
 then $\operatorname{Sh} (\lambda,\nu)$
 is finite-dimensional for all $(\lambda,\nu)$, 
 but \eqref{eqn:Shbdd} fails.  
\end{enumerate}
}}
\end{example}

This article is organized as follows:

In Section \ref{sec:PP}, 
 we give a complete list
 of the reductive symmetric pairs $(G,G')$
 such that the dimension of the Shintani space
 is finite/uniformly bounded.

After a brief review 
 on basic results
 on continuous
 (infinite-dimensional) representations
 of real reductive Lie groups
 in Section \ref{sec:pre}, 
 we enrich Theorem \ref{thm:A}
 by adding some more conditions that are equivalent
 to the finiteness of 
 $\dim_{\mathbb{C}} \operatorname{Sh} (\lambda,\nu)$
 in Theorem \ref{thm:dozen}.

The upper estimate of 
 $\dim_{\mathbb{C}} \operatorname{Sh} (\lambda,\nu)$
 is proved in Section \ref{sec:sph}
 by using the theory
 of {\it{real spherical}} homogeneous spaces
 which was established in \cite{xtoshitoshima}.

In Section \ref{sec:break}
 we give a lower estimate 
 of $\dim_{\mathbb{C}} \operatorname{Sh} (\lambda,\nu)$
 by using the intertwining operators 
 constructed in Section \ref{sec:Poisson}.

In Section \ref{sec:PS}
 we apply the theory
 of harmonic analysis
 on Riemannian symmetric spaces,
 and investigate the relationship
 between symmetry breaking operators
 of the restriction 
 of admissible smooth representations
 of $G$ to $G'$
 and Shintani functions.  
Section \ref{sec:sbon}
 provides an example
 for $(G,G')=(O(n+1,1),O(n,1))$
 by using a recent work
 \cite{xtsbon}
 with B. Speh
 on symmetry breaking operators.

\section{Classification of 
 $(G,G')$ 
 with 
$\dim_{\mathbb{C}}
 \operatorname{Sh}(\lambda,\nu)
 <\infty$
}
\label{sec:PP}
This section gives a complete
 classification
 of the reductive symmetric pairs
 $(G,G')$
 such that 
 the dimension of the Shintani space
 $\operatorname{Sh}(\lambda,\nu)$
 is finite/bounded
 for any $(\ZG G,\ZG {G'})$-infinitesimal 
 characters $(\lambda,\nu)$.  
Owing to the criteria in 
 Theorems \ref{thm:A} and \ref{thm:B}, 
 the classification is reduced to that 
 of (real) spherical homogeneous spaces
 of the form $(G \times G')/\operatorname{diag}G'$, 
 which was accomplished in \cite{xtoshimatsuki}.  
\begin{definition}
[Symmetric pair]
\label{def:symm}
{\rm{
Let $G$ be a real reductive Lie group.  
We say $(G,G')$ is a 
 {\it{reductive symmetric pair}}
 if $G'$ is an open subgroup 
 of the fixed point subgroup 
 $G^{\sigma}$
 of some involutive automorphism $\sigma$
 of $G$.  
}}
\end{definition}
\begin{example}
\label{ex:symm}
{\rm{
{\rm{1)}}\enspace
 (Group case)\enspace
Let $G_1$ be a Lie group.  
Then the pair 
\[
   (G,G')=(G_1 \times G_1, \operatorname{diag}G_1)
\]
forms a symmetric pair
 with the involution
 $\sigma \in \operatorname{Aut}(G)$
 defined by 
 $\sigma(x,y)=(y,x)$.  
Since the homogeneous space $G/G'$ is isomorphic
 to the group manifold $G_1$
 with $(G_1 \times G_1)$-action from 
the left and the right,
 the pair $(G_1 \times G_1, \operatorname{diag}G_1)$
 is sometimes referred to
 as the group case.  
\par\noindent
{\rm{2)}}\enspace
(Riemannian symmetric pair)\enspace
Let $K$ be a maximal compact subgroup 
of a real reductive linear Lie group $G$.  
Then the pair $(G,K)$ is a symmetric pair
 because $K$ is the fixed point subgroup
 of a Cartan involution $\theta$ of $G$.  
Since the homogeneous space $G/K$ becomes a symmetric space
 with respect to the Levi-Civita connection of a $G$-invariant Riemannian metric
 on $G/K$, 
the pair $(G,K)$ is sometimes referred
 to as a Riemannian symmetric pair.  
}}
\end{example}

The classification of reductive symmetric pairs
 was established by Berger \cite{Berger}
 on the level of Lie algebras.  
Among them we list the pairs $(G,G')$ 
 such that the space of Shintani functions
 is finite-dimensional as follows:

\begin{theorem}
\label{thm:PP}
Suppose $(G,G')$ is a reductive symmetric pair.  
Then the following two conditions
 are equivalent:
\begin{enumerate}
\item[{\rm{(i)}}]
$\operatorname{Sh}(\lambda,\nu)$
 is finite-dimensional 
 for any $(\ZG G, \ZG {G'})$-infinitesimal 
 characters $(\lambda,\nu)$.  
\item[{\rm{(ii)}}]
The pair $({\mathfrak{g}},{\mathfrak{g}}')$ of the Lie algebras 
 is isomorphic 
 {\rm{(}}up to outer automorphisms{\rm{)}}
 to the direct sum of the following pairs:
\begin{enumerate}
\item[{\rm{A)}}]
{\rm{Trivial case:}}
${\mathfrak{g}}={\mathfrak{g}}'$.  
\item[{\rm{B)}}]
{\rm{Abelian case:}}
${\mathfrak {g}}={\mathbb{R}}$, 
${\mathfrak {g}}'=\{0\}$.  
\item[{\rm{C)}}]
{\rm{Compact case:}}
${\mathfrak {g}}$ is the Lie algebra
 of a compact simple Lie group.  
\item[{\rm{D)}}]
{\rm{Riemannian symmetric pair:}}
${\mathfrak {g}}'$ is the Lie algebra 
 of a maximal compact subgroup $K$
 of a non-compact simple Lie group $G$.  
\item[{\rm{E)}}]
{\rm{Split rank one case ($\operatorname{rank}_{{\mathbb{R}}}G=1$):}}
\begin{enumerate}
\item[{\rm{E1)}}]
$({\mathfrak{o}}(p+q,1),
{\mathfrak{o}}(p)+{\mathfrak{o}}(q,1))$
\quad\quad\,\, $(p+q \ge 2)$.
\item[{\rm{E2)}}]
$({\mathfrak{su}}(p+q,1),
{\mathfrak{s}}({\mathfrak {u}}(p)+{\mathfrak{u}}(q,1)))$
\quad
$(p+q \ge 1)$.
\item[{\rm{E3)}}]
$({\mathfrak{sp}}(p+q,1),
{\mathfrak{sp}}(p)+{\mathfrak{sp}}(q,1))$
\quad\,
$(p+q \ge 1)$.
\item[{\rm{E4)}}]
$({\mathfrak{f}}_{4(-20)},
{\mathfrak{o}}(8,1))$.  
\end{enumerate}
\item[{\rm{F)}}]
{\rm{Strong Gelfand pairs
 and their real forms:}}
\begin{enumerate}
\item[{\rm{F1)}}]
$({\mathfrak{sl}}(n+1,{\mathbb{C}}),
{\mathfrak{gl}}(n,{\mathbb{C}}))$
\, $(n\ge 2)$.  
\item[{\rm{F2)}}]
$({\mathfrak{o}}(n+1,{\mathbb{C}}),
{\mathfrak{o}}(n,{\mathbb{C}}))$
\,\,\,\, $(n\ge 2)$.  
\item[{\rm{F3)}}]
$({\mathfrak{sl}}(n+1,{\mathbb{R}}),
{\mathfrak{gl}}(n,{\mathbb{R}}))$ 
\,\,\,$(n\ge 1)$.  
\item[{\rm{F4)}}]
$({\mathfrak{su}}(p+1,q),{\mathfrak{u}}(p,q))$
\,\,\,\,\, $(p+q\ge 1)$.  
\item[{\rm{F5)}}]
$({\mathfrak{o}}(p+1,q),{\mathfrak{o}}(p,q))$ 
\,\,\,\,\,\,\,\,\,\,$(p+q\ge 2)$.  
\end{enumerate}
\item[{\rm{G)}}]
$({\mathfrak{g}}, {\mathfrak{g}}')=
 ({\mathfrak{g}}_1+{\mathfrak{g}}_1, \operatorname{diag} {\mathfrak{g}}_1)$
{\rm{Group case:}}
\begin{enumerate}
\item[{\rm{G1)}}]
${\mathfrak{g}}_1$ is the Lie algebra
 of a compact simple Lie group.  
\item[{\rm{G2)}}]
$({\mathfrak{o}}(n,1)+{\mathfrak{o}}(n,1), \operatorname{diag}
{\mathfrak{o}}(n,1))$
\quad
$(n \ge 2)$.  
\end{enumerate}
\item[{\rm{H)}}]
{\rm{Other cases:}}
\begin{enumerate}
\item[{\rm{H1)}}]
$({\mathfrak{o}}(2n, 2),
{\mathfrak{u}}(n,1))$
\hphantom{MMMMMMMMM}
$(n \ge 1)$.
\item[{\rm{H2)}}]
$({\mathfrak{su}}^{\ast}(2n+2),
{\mathfrak{su}}(2)+{\mathfrak{su}}^{\ast}(2n)+\mathbb{R})$ 
\quad
$(n\ge 1)$.  
\item[{\rm{H3)}}]
$({\mathfrak{o}}^{\ast}(2n+2),
{\mathfrak{o}}(2)+{\mathfrak{o}}^{\ast}(2n))$
\hphantom{MMMMn} $(n\ge 1)$.
\item[{\rm{H4)}}]
$({\mathfrak{sp}}(p+1,q),
{\mathfrak{sp}}(p,q)+{\mathfrak{sp}}(1))$.  
\item[{\rm{H5)}}]
$({\mathfrak{e}}_{6(-26)},
{\mathfrak{so}}(9,1)+{\mathbb{R}})$.
\end{enumerate}
\end{enumerate}

\end{enumerate}
\end{theorem}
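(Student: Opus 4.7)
The plan is to split the equivalence (i) $\Leftrightarrow$ (ii) into a geometric reduction followed by a case-by-case classification. For the reduction, I invoke the equivalence (i) $\Leftrightarrow$ (iv) of Theorem \ref{thm:A}: finite-dimensionality of $\operatorname{Sh}(\lambda,\nu)$ for all $(\lambda,\nu)$ is equivalent to the geometric property (PP), namely the existence of minimal parabolic subgroups $P\subset G$ and $P'\subset G'$ with $PP'$ open in $G$. Since (PP) depends only on the pair of Lie algebras $({\mathfrak{g}},{\mathfrak{g}}')$ and is additive under direct sums of symmetric pairs, the task reduces to classifying indecomposable pairs within Berger's list.

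Next I would reformulate (PP) as real sphericality of the $G\times G'$-homogeneous space $X := (G\times G')/\operatorname{diag} G'$: a fiber dimension count shows the $(P\times P')$-orbit through the base point of $X$ has dimension $\dim P + \dim P' - \dim(P \cap P')$, and this equals $\dim X = \dim G$ precisely when $PP'$ is open in $G$. Thus Theorem \ref{thm:PP} amounts to the classification of reductive symmetric pairs for which $X$ is real spherical, which is exactly the list compiled in \cite{xtoshimatsuki}; my plan is to use that result as the principal input.

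To verify that each family A)--H) satisfies (PP): the trivial, abelian, compact, and Riemannian cases (A, B, C, D, G1) are handled directly, e.g.\ case A by taking $P'$ opposite to $P$ so that $P\bar P$ is the Bruhat big cell, and the compact cases by observing $P = G$. The split rank one families (E1--E4) follow from the Iwasawa decomposition together with $\operatorname{rank}_{\mathbb{R}} G = 1$, which allows a single Weyl chamber argument. For the strong Gelfand pairs and their real forms (F1--F5), the group case (G2), and the exceptional series (H1--H5), I would rely on the explicit orbit analyses of \cite{xtoshimatsuki}, where suitable $(P,P')$ with $PP'$ open are exhibited using restricted root data adapted to the defining involution $\sigma$.

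The main obstacle is the exhaustiveness direction: ruling out every reductive symmetric pair not on the list. The strategy is to march through Berger's classification and, for each excluded indecomposable pair, produce a dimensional obstruction to $(P\times P')$ having an open orbit on $X$. In higher real rank or non-split cases a root-multiplicity count typically forces $\dim P + \dim P' - \dim(P\cap P') < \dim G$ for every choice of $(P, P')$, a representative example being the group case for $SL(n,{\mathbb{R}})$ with $n\ge 3$. Organizing this elimination so that the surviving indecomposable pairs coincide precisely with A)--H) is the technical heart of the proof, and it is here that the combinatorial real-spherical and visibility-type criteria developed in \cite{xtoshimatsuki} become essential.
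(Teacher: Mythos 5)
Your proposal matches the paper's proof exactly: reduce (i) to the geometric condition (PP) via Theorem \ref{thm:A} (i) $\Leftrightarrow$ (iv), then invoke the classification of reductive symmetric pairs satisfying (PP) from \cite[Theorem 1.3]{xtoshimatsuki}. The extra detail you supply (verifying individual cases, sketching the exhaustiveness argument) simply unpacks what that citation already establishes, so the route is identical.
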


We single out 
 those pairs $(G,G')$
 having the uniform boundedness property as follows:
\begin{theorem}
\label{thm:BB}
Suppose $(G,G')$ is a reductive symmetric pair.  
Then the following conditions are equivalent:

\begin{enumerate}
\item[{\rm{(i)}}]
There exists a constant
 such that 
\[
  \dim_{\mathbb{C}}
 \operatorname{Sh}(\lambda,\nu)
 \le C
\]
 for any $(\ZG G, \ZG {G'})$-infinitesimal 
 characters $(\lambda,\nu)$.  
\item[{\rm{(ii)}}]
The pair of the Lie algebras 
 $({\mathfrak{g}},{\mathfrak{g}}')$
 is isomorphic 
 {\rm{(}}up to outer automorphisms{\rm{)}}
 to the direct sum of the pairs in {\rm{(A)}}, {\rm{(B)}}
 and {\rm{(F1)}} -- {\rm{(F5)}}.  
\end{enumerate}
\end{theorem}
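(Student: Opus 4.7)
The plan is to derive Theorem \ref{thm:BB} by combining Theorem \ref{thm:B} with the classification achieved in Theorem \ref{thm:PP}. The overall scheme is to translate (i) into a purely complex-geometric condition on $(\mathfrak{g}_{\mathbb{C}}, \mathfrak{g}'_{\mathbb{C}})$ and then scan the list of Theorem \ref{thm:PP} to see which entries survive.

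First I would invoke Theorem \ref{thm:B}: condition (i) of the present theorem is equivalent to the geometric property (BB), namely the existence of Borel subgroups $B \subset G_{\mathbb{C}}$ and $B' \subset G'_{\mathbb{C}}$ with $BB'$ open in $G_{\mathbb{C}}$, or equivalently the sphericity of the complex homogeneous space $(G_{\mathbb{C}} \times G'_{\mathbb{C}})/\operatorname{diag} G'_{\mathbb{C}}$. This depends only on $(\mathfrak{g}_{\mathbb{C}}, \mathfrak{g}'_{\mathbb{C}})$. Since the uniform bound in (i) implies the finiteness in Theorem \ref{thm:A}(i), every pair satisfying (i) of Theorem \ref{thm:BB} must appear on the list (A)--(H) of Theorem \ref{thm:PP}, so one is reduced to deciding (BB) entry by entry.

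The positive cases are short. The trivial (A) and abelian (B) are obvious. For (F1)--(F5), the complexifications all fall under one of the complex strong Gelfand pairs $SL(n+1,\mathbb{C}) \supset GL(n,\mathbb{C})$ or $O(n+1,\mathbb{C}) \supset O(n,\mathbb{C})$ (possibly doubled, when $\mathfrak{g}$ is itself complex as in (F1), (F2)), and Borel-openness in these pairs is the classical fact underlying the multiplicity-one branching laws.

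The main obstacle is ruling out (C), (D), (E1)--(E4), (G1)--(G2), and (H1)--(H5). The conceptual point is that (PP) is strictly weaker than (BB): for example, every Riemannian symmetric pair (D) satisfies (PP) via the Iwasawa decomposition $G = PK$, yet $K_{\mathbb{C}}$ is almost never Borel-spherical in $G_{\mathbb{C}}$; similarly the split-rank-one series (E) are real spherical because the real restricted Weyl chamber is one-dimensional, a feature lost upon complexification. I would verify the failure of (BB) in each remaining case either by invoking the existing classification of complex spherical subgroups of complex reductive groups, or by a direct dimension count: $BB'$ open in $G_{\mathbb{C}}$ forces
\[
 \dim_{\mathbb{C}} G_{\mathbb{C}} = \dim_{\mathbb{C}} B + \dim_{\mathbb{C}} B' - \dim_{\mathbb{C}}(B \cap B'),
\]
and in each excluded case the right-hand side can be seen to be too small. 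The most delicate verifications occur for the quaternionic families (E3) and (H4), the Cayley case (E4), the exceptional item (H5), and the group case (G2); these require explicit root-theoretic checks but are standard within the theory of spherical varieties.
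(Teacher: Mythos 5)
Your scheme is the paper's: Theorem \ref{thm:B} reduces condition (i) to the geometric property (BB), and the remaining task is to decide which reductive symmetric pairs satisfy (BB). The paper disposes of this in one line by citing \cite[Proposition 1.6]{xtoshimatsuki}, which is precisely the classification of (BB) symmetric pairs; you propose instead to rederive it by filtering the (PP) list of Theorem \ref{thm:PP} entry by entry. That is a legitimate alternative route — logically a (BB) pair must already appear on the (PP) list — but it shifts the whole burden onto the case analysis, which is exactly the content of the cited reference.

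The one step that would not survive scrutiny as written is your fallback dimension count. The identity $\dim(BB') = \dim B + \dim B' - \dim(B\cap B')$ rules out openness only if the right-hand side is strictly less than $\dim G_{\mathbb{C}}$ for \emph{every} relative position of the two Borel subgroups. When $\dim B + \dim B' < \dim G_{\mathbb{C}}$ this is automatic, and it does dispatch the large-rank members of several excluded families. But for a number of entries — low-rank members of (E3) and (H4), Riemannian pairs (D) of small rank, (G2) — one has $\dim B + \dim B' \ge \dim G_{\mathbb{C}}$, and one must then show that $\dim(B\cap B')$ cannot be pushed below $\dim B + \dim B' - \dim G_{\mathbb{C}}$ by any conjugation. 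That is a genuine orbit-theoretic assertion about $B$ acting on the flag variety $G_{\mathbb{C}}/B'$, not a numerical count, and it is the substantive part of the classification. So the proposal is sound in outline, but either the orbit analysis must be carried out in full or, as the paper does, one should invoke \cite[Proposition 1.6]{xtoshimatsuki} directly rather than hope the inequality is self-evident.
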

\begin{example}
\label{ex:AA}
{\rm{
In connection with branching problems,
 some of the pairs appeared earlier
 in the literatures.  
For instance,
\begin{enumerate}
\item[1)]
{\rm{(Strong Gelfand pairs
\cite{Kr})}}\enspace
(F1), (F2).  

\item[2)]
{\rm{(the Gross--Prasad conjecture \cite{GrossPrasad})}}\enspace
(F2), (F5).  
\item[3)]
{\rm{(Finite-multiplicity for tensor products
 \cite{xtoshi95})}}\enspace
(G2).  
\item[4)]
{\rm{(Multiplicity-free restriction
\cite{aizenbudgourevitch, SunZhu})}}\enspace
(F1)--(F5).  
\end{enumerate}
}}
\end{example}
\begin{remark}
\label{rem:BB}
{\rm{
The following pairs $(G,G')$ are non-symmetric pairs
such that $(G,G')$ satisfies 
 the condition (i) of Theorem \ref{thm:BB}.  
\begin{equation*}
(G,G')=(SO(8,{\mathbb{C}}),Spin(7,{\mathbb{C}})),
\quad
(SO(4,4),Spin(4,3)).  
\end{equation*}
In fact 
 the Lie algebras $({\mathfrak {g}}, {\mathfrak {g}}')$
 are symmetric pairs, 
 but the involution 
 of ${\mathfrak{g}}$
 does not lift to the group $G$.  
}}
\end{remark}

\begin{proof}
[Proof of Theorem \ref{thm:PP}]
Direct from Theorem \ref{thm:A}
 and \cite[Theorem 1.3]{xtoshimatsuki}.  
\end{proof}
\begin{proof}
[Proof of Theorem \ref{thm:BB}]
Direct from Theorem \ref{thm:B}
 and \cite[Proposition 1.6]{xtoshimatsuki}.  
\end{proof}

\section{Preliminary results}
\label{sec:pre}
We begin with a quick review of some basic results
 on (infinite-dimensional) continuous representations
 of real reductive Lie groups.  

\subsection{Continuous representations
 and the Frobenius reciprocity}
\label{subsec:rep}

By a continuous representation $\pi$
 of a Lie group $G$
 on a topological vector space $V$
 we shall mean 
 that $\pi:G \to GL_{\mathbb{C}}(V)$ is a group homomorphism
 such that the induced map 
 $G \times V \to V$, 
 $(g,v) \mapsto \pi(g)v$
 is continuous.  
We say $\pi$ is a (continuous) Hilbert 
 [Banach, Fr{\'e}chet, $\cdots$]
 representation
 if $V$ is a Hilbert [Banach, Fr{\'e}chet, $\cdots$]
space.  
We note
 that a continuous Hilbert representation 
 is not necessarily a unitary representation;
 a Hilbert representation $\pi$ of $G$
 is said to be a unitary representation
 provided
 that all the operators $\pi(g)$
 ($g \in G$)
 are unitary.

Suppose $\pi$ is a continuous representation 
 of $G$ on a Banach space $V$.  
A vector $v \in V$ is said
 to be {\it{smooth}}
 if the map
 $G \to V$, 
 $g \mapsto \pi(g)v$ is of $C^{\infty}$-class.  
Let $V^{\infty}$ denote
 the space
 of smooth vectors
 of the representation $(\pi,V)$.  
Then $V^{\infty}$ carries 
 a Fr{\'e}chet topology
 with a family of semi-norms
$\|v\|_{i_1\cdots i_k}:=\|d\pi(X_{i_1}) \cdots d\pi(X_{i_k})v\|$, 
 where $\{X_1, \cdots, X_n\}$ is a basis
 of ${\mathfrak {g}}$.  
Then $V^{\infty}$ is a $G$-invariant subspace
 of $V$, 
 and we obtain a continuous Fr{\'e}chet representation 
 $(\pi^{\infty}, V^{\infty})$
 of $G$.  

Suppose that $G'$ is another Lie group.  
If $\pi$ and $\tau$ are Hilbert representations
 of $G$ and $G'$
 on the Hilbert spaces ${\mathcal{H}}_{\pi}$
 and ${\mathcal{H}}_{\tau}$, 
 respectively,
 then we can define a continuous Hilbert representation 
 $\pi \boxtimes \tau$
 of the direct product group on the Hilbert completion 
 on ${\mathcal{H}}_{\pi} \widehat\otimes {\mathcal{H}}_{\tau}$
 of the pre-Hilbert space 
 ${\mathcal{H}}_{\pi} \otimes {\mathcal{H}}_{\tau}$.

Suppose further 
 that $G'$ is a subgroup of $G$.  
Then we may regard 
 $\pi$ as a representation of $G'$
 by the restriction.  
The resulting representation is denoted 
 by $\pi|_{G'}$.  
The restriction
 of the outer tensor product $\pi \boxtimes \tau$
 of $G \times G'$
 to the subgroup 
 $\operatorname{diag} G'
  =\{(g', g'): g' \in G'\}$
 is denoted by $\pi \otimes \tau$.  
By a {\it{symmetry breaking operator}}
 we mean a continuous $G'$-homomorphism from the 
 representation space of $\pi$
 to that of $\tau$.  
We write $\invHom{G'}{\pi|_{G'}}{\tau}$
 for the vector space
 of continuous $G'$-homomorphisms.  
Analogous notation
 is applied to smooth representations.

For the convenience
 of the reader, 
we review some basic properties
 of the restriction:
\begin{lemma}
\label{lem:rest}
Suppose that $\pi$ and $\tau$
 are Hilbert representations
 of $G$ and $G'$
 on Hilbert spaces
 ${\mathcal{H}}_{\pi}$ and ${\mathcal{H}}_{\tau}$, 
respectively.  
\begin{enumerate}
\item[{\rm{1)}}]
 There is a canonical injective homomorphism:
\begin{equation}
\invHom {G'}{\pi|_{G'}}{\tau}
 \hookrightarrow 
\invHom{G'}{\pi^{\infty}|_{G'}}{\tau^{\infty}}, 
\qquad
T \mapsto T|_{{\mathcal{H}}_{\pi}^{\infty}}.  
\label{eqn:Hsmooth}
\end{equation}
\item[{\rm{2)}}]
Let $\tau^{\vee}$ be the contragredient representation 
 of $\tau$.  
Then we have a canonical isomorphism:
\begin{equation}
\invHom {G'}{\pi|_{G'}}{\tau} 
\simeq 
\invHom{G'}{\pi \otimes \tau^{\vee}}{{\mathbb{C}}}.  
\label{eqn:Hrest}
\end{equation}
\item[{\rm{3)}}]
There is a canonical injective homomorphism
 if $G$ and $G'$ are real reductive:
\[
\invHom{G'}{\pi^{\infty}|_{G'}}{\tau^{\infty}}
\hookrightarrow
\invHom{G'}{\pi^{\infty}\otimes (\tau^{\vee})^{\infty}}{\mathbb{C}}.  
\]
\end{enumerate}
\end{lemma}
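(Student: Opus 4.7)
The plan is to verify the three parts in sequence, each an instance of an adjunction between operators and invariant bilinear forms.

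For part (1), the assignment is $T \mapsto T|_{{\mathcal{H}}_{\pi}^{\infty}}$. I would first check that $T$ sends $G$-smooth vectors to $G'$-smooth vectors: if $v \in {\mathcal{H}}_{\pi}^{\infty}$, then $v$ is a fortiori smooth for the restricted $G'$-action, so the orbit map $g' \mapsto T\pi(g')v = \tau(g')Tv$ is of class $C^{\infty}$ by composing with the bounded operator $T$. Continuity in the Fr{\'e}chet topologies then follows because $T$ intertwines $d\pi(X)$ and $d\tau(X)$ for $X \in {\mathfrak{g}}'$, so the seminorms defining ${\mathcal{H}}_{\tau}^{\infty}$ are dominated by those on ${\mathcal{H}}_{\pi}^{\infty}$. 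Injectivity is immediate from G{\aa}rding's theorem on the density of ${\mathcal{H}}_{\pi}^{\infty}$ in ${\mathcal{H}}_{\pi}$.

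For part (2), given $T$ I would set $B(v,w) := \langle Tv, w\rangle$, using the canonical duality pairing between ${\mathcal{H}}_{\tau}$ and ${\mathcal{H}}_{\tau^{\vee}}$. Continuity and $G'$-invariance of $B$ are immediate, giving an element of $\invHom{G'}{\pi \otimes \tau^{\vee}}{{\mathbb{C}}}$. In the opposite direction, reflexivity of the Hilbert space ${\mathcal{H}}_{\tau}$ lets me recover $T$ uniquely from $B$ via $\langle Tv, \cdot \rangle = B(v, \cdot)$, and the two constructions are seen at once to be mutually inverse and $G'$-equivariant throughout.

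For part (3), I would imitate (2) on smooth vectors: given $T \in \invHom{G'}{\pi^{\infty}}{\tau^{\infty}}$, put $B(v, w) := \langle Tv, w \rangle$ for $v \in {\mathcal{H}}_{\pi}^{\infty}$ and $w \in ({\mathcal{H}}_{\tau^{\vee}})^{\infty}$, interpreting the pairing via the continuous inclusions ${\mathcal{H}}_{\tau}^{\infty} \hookrightarrow {\mathcal{H}}_{\tau}$ and $({\mathcal{H}}_{\tau^{\vee}})^{\infty} \hookrightarrow {\mathcal{H}}_{\tau^{\vee}}$. The hard part will be verifying that $B$ is continuous for the topology on the completed tensor product $\pi^{\infty} \otimes (\tau^{\vee})^{\infty}$: my plan is to derive separate continuity from continuity of $T$ as a map into the Fr{\'e}chet space $\tau^{\infty}$, then invoke Banach--Steinhaus (both factors being Fr{\'e}chet) to upgrade to joint continuity, and finally extend to the projective tensor product by its universal property. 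Reductivity is invoked to ensure $(\tau^{\vee})^{\infty}$ is a well-behaved Fr{\'e}chet representation of $G'$ to which G{\aa}rding's density theorem applies. Injectivity then follows from that density together with Hahn--Banach: if $B = 0$, then $\langle Tv, w \rangle = 0$ for all such $w$, forcing $Tv = 0$ for every smooth $v$.
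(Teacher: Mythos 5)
Your proposal is correct and follows essentially the same route as the paper. The paper dispatches parts (1) and (3) by citation (to \cite[Lemma~5.1]{xtoshitoshima} and \cite[Lemma~A.0.8]{aizenbudgourevitch}, respectively); you instead supply the natural self-contained arguments that those references encode. For (1), your observation that intertwining gives $\tau(g')Tv = T\pi(g')v$, so $T$ maps $G$-smooth to $G'$-smooth vectors, the seminorm estimate $\|d\tau(X_{i_1})\cdots d\tau(X_{i_k})Tv\| \le \|T\|\,\|d\pi(X_{i_1})\cdots d\pi(X_{i_k})v\|$ gives Fr\'echet continuity, and G{\aa}rding density gives injectivity — this is exactly the expected proof. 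For (2), identifying $T$ with the bilinear form $B(v,w)=\langle Tv, w\rangle$ and recovering $T$ by Riesz representation matches the paper's appeal to the canonical isomorphism $\operatorname{Hom}_{\mathbb{C}}(\mathcal{H}_\pi,\mathcal{H}_\tau) \simeq \operatorname{Hom}_{\mathbb{C}}(\mathcal{H}_\pi\widehat\otimes\mathcal{H}_\tau^\vee,\mathbb{C})$ followed by taking $G'$-invariants.

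Two small comments on part (3). First, the detour through Banach--Steinhaus is unnecessary: $B$ is jointly continuous directly, because $|B(v,w)| = |\langle Tv,w\rangle| \le \|Tv\|_{\mathcal{H}_\tau}\|w\|_{\mathcal{H}_{\tau^\vee}}$, and $v\mapsto \|Tv\|_{\mathcal{H}_\tau}$ is a continuous seminorm on $\mathcal{H}_\pi^\infty$ (composite of $T\colon \mathcal{H}_\pi^\infty\to\mathcal{H}_\tau^\infty$ with the continuous inclusion $\mathcal{H}_\tau^\infty\hookrightarrow\mathcal{H}_\tau$), while $\|\cdot\|_{\mathcal{H}_{\tau^\vee}}$ restricts to a continuous seminorm on $(\mathcal{H}_{\tau^\vee})^\infty$. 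Second, your remark that reductivity guarantees G{\aa}rding density is not quite the point: density of smooth vectors holds for Banach representations of any Lie group. The reductivity hypothesis in part (3) is rather what makes the completed projective tensor product $\pi^\infty\widehat\otimes(\tau^\vee)^\infty$ the \emph{correct} model for the smooth representation $\pi^\infty\otimes(\tau^\vee)^\infty$ (this is the Casselman--Wallach/Aizenbud--Gourevitch framework that the paper's citation invokes); with that in hand, your extension-by-density and injectivity arguments go through.
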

\begin{proof}
1)\enspace 
See \cite[Lemma 5.1]{xtoshitoshima},
 for instance.  
2) \enspace
We have a canonical isomorphism between 
$\operatorname{Hom}_{\mathbb{C}}
 ({\mathcal{H}}_{\pi}, {\mathcal{H}}_{\tau})$
 and 
$\operatorname{Hom}_{\mathbb{C}}
 ({\mathcal{H}}_{\pi} \widehat \otimes {\mathcal{H}}_{\tau}^{\vee}, {\mathbb{C}})$, 
 where $\operatorname{Hom}_{\mathbb{C}}(\,\, ,\,\,)$
 denotes the space of continuous linear maps.  
Taking $G'$-invariant elements,
 we get \eqref{eqn:Hrest}.  
3) \enspace
See \cite[Lemma A.0.8]{aizenbudgourevitch}, 
 for instance.  
\end{proof}

\begin{proposition}
[Frobenius reciprocity]
\label{prop:Frob}
Let $H$ be a closed subgroup
of a Lie group $G$.  
Suppose that $\pi$ is a continuous representation
 of $G$
 on a topological vector space $V$.  
Then there is a canonical bijection
\begin{equation}
\label{eqn:Frob}
 \invHom{H}{\pi|_H}{{\mathbb{C}}}
 \simeq 
 \invHom{G}{\pi}{C(G/H)}, 
\qquad
 \lambda \mapsto T
\end{equation}
defined by 
\[
   T(v)(g)=\lambda(\pi(g^{-1})v)
\qquad 
 v \in V.  
\]
Furthermore,
 if $\pi^{\infty}$ is a smooth representation,
 then we have
\[
 \invHom{H}{\pi^{\infty}|_H}{{\mathbb{C}}}
 \simeq 
 \invHom{G}{\pi^{\infty}}{C^{\infty}(G/H)}.  
\]
\end{proposition}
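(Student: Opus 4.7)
The plan is to exhibit the explicit inverse map and then verify, in turn, well-definedness, $G$- (resp.\ $H$-) equivariance, continuity, and mutual inversion. Throughout, the $G$-action on $C(G/H)$ is the left regular action $(g\cdot f)(xH):=f(g^{-1}xH)$.

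First I would check that the assignment $\lambda\mapsto T_\lambda$, with $T_\lambda(v)(g):=\lambda(\pi(g^{-1})v)$, lands in the correct space. Since $\lambda$ is $H$-invariant, $T_\lambda(v)(gh)=\lambda(\pi(h^{-1})\pi(g^{-1})v)=\lambda(\pi(g^{-1})v)$, so $T_\lambda(v)$ factors through $G/H$. Continuity of $T_\lambda(v)$ as a function on $G/H$ follows from continuity of the orbit map $g\mapsto\pi(g^{-1})v$ combined with continuity of the functional $\lambda$, and from the fact that $G\to G/H$ is an open continuous surjection. A direct computation gives $G$-equivariance:
\[
T_\lambda(\pi(g_0)v)(g)=\lambda(\pi((g_0^{-1}g)^{-1})v)=T_\lambda(v)(g_0^{-1}g)=(g_0\cdot T_\lambda(v))(g).
\]
Finally, $T_\lambda\colon V\to C(G/H)$ is continuous in the compact-open topology because the map $(g,v)\mapsto\pi(g^{-1})v$ is jointly continuous, so $\lambda\circ\pi(\cdot^{-1})v$ converges uniformly on compacta of $G/H$ whenever $v$ varies in a compact (or just convergent) set of $V$.

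Next I would construct the inverse. Given $T\in\invHom{G}{\pi}{C(G/H)}$, define $\lambda_T(v):=T(v)(eH)$. The functional $\lambda_T$ is continuous as a composition of $T$ with the continuous evaluation at $eH\in G/H$. For $H$-invariance,
\[
\lambda_T(\pi(h)v)=T(\pi(h)v)(eH)=(h\cdot T(v))(eH)=T(v)(h^{-1}H)=T(v)(eH)=\lambda_T(v).
\]
To check the two assignments are mutual inverses, on one side $T_{\lambda_T}(v)(g)=\lambda_T(\pi(g^{-1})v)=T(\pi(g^{-1})v)(eH)=T(v)(gH)$, and on the other side $\lambda_{T_\lambda}(v)=T_\lambda(v)(eH)=\lambda(\pi(e)v)=\lambda(v)$. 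This establishes \eqref{eqn:Frob}.

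For the smooth statement, the only new points are to show that $T_\lambda(v)\in C^\infty(G/H)$ when $v\in V^\infty$, and that $T_\lambda\colon V^\infty\to C^\infty(G/H)$ is continuous. The first point follows because for $v\in V^\infty$ the orbit map $g\mapsto\pi(g^{-1})v$ is smooth into $V^\infty$, and the (continuous linear) functional $\lambda$ pulls back smoothness to $C^\infty(G)$, which then descends to $C^\infty(G/H)$ by the $H$-invariance already verified. The second point follows by the same argument applied to the left-invariant derivatives $L_X T_\lambda(v)=T_\lambda(d\pi(X)v)$ that define the Fr\'echet topology on $V^\infty$ and on $C^\infty(G/H)$. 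Conversely, given $T\in\invHom{G}{\pi^\infty}{C^\infty(G/H)}$, the functional $\lambda_T(v)=T(v)(eH)$ is automatically continuous on $V^\infty$ as the composition of $T$ with the continuous point evaluation on $C^\infty(G/H)$.

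I expect the main subtlety to be the topological bookkeeping rather than any conceptual difficulty: verifying that $T_\lambda$ is continuous with values in the compact-open (resp.\ $C^\infty$) topology on $G/H$, and that this continuity is compatible with the Fr\'echet structure of $V^\infty$. Both are standard but require invoking the joint continuity of the group action and the smoothness of the orbit map on $V^\infty$, so I would set these up carefully at the beginning and then let the rest of the argument proceed by the direct calculations outlined above.
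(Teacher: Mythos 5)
Your proof is correct and follows essentially the same route as the paper: the construction $\lambda\mapsto T_\lambda$ and its inverse $T\mapsto T(\cdot)(eH)$, with the only substantive points being joint continuity of the action (for the continuous case) and smoothness of the orbit map (for the $C^\infty$ case). The paper's proof consists of exactly these two continuity remarks and leaves the algebraic verifications implicit; you have simply spelled them out.
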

\begin{proof}
The linear map 
$T:V \to C(G/H)$ is continuous 
 because $G \times V \to V$, 
 $(g,v) \mapsto \pi(g^{-1})v$
 is continuous.  
The last statement follows 
 because $G \to V$, 
 $g \mapsto \pi(g)^{-1}v$
 is a $C^{\infty}$-map.  
\end{proof}

\subsection{Admissible representations}
\label{subsec:adm}

In this subsection we review some basic terminologies
 for Harish-Chandra modules.

Let $G$ be a real reductive linear Lie group, 
 and $K$ a maximal compact subgroup of $G$.  
Let ${\mathcal{HC}}$ denote the category
 of Harish-Chandra modules
 where the objects are $({\mathfrak {g}}, K)$-modules
 of finite length,
 and the morphisms are $({\mathfrak {g}}, K)$-homomorphisms.

Let $\pi$ be a continuous representation 
 of $G$ on a Fr{\'e}chet space $V$.  
Suppose 
 that $\pi$ is of finite length,
 namely,
 there are at most finitely many closed $G$-invariant subspaces
 in $V$.  
We say $\pi$ is {\it{admissible}}
 if 
\[
   \dim \operatorname{Hom}_K(\tau, \pi|_K)< \infty
\]
 for any irreducible finite-dimensional representation
 $\tau$ of $K$.  
We denote by $V_K$
 the space of $K$-finite vectors.  
Then $V_K \subset V^{\infty}$
 and the Lie algebra ${\mathfrak {g}}$
 leaves $V_K$ invariant.  
The resulting $({\mathfrak {g}}, K)$-module
 on $V_K$
 is called the underlying $({\mathfrak {g}}, K)$-module
 of $\pi$, 
 and will be denoted by $\pi_K$.

An admissible representation $(\pi,V)$
is said to be {\it{spherical}}
 if $V$ contains a nonzero $K$-fixed vector,
 or equivalently,
 the underlying $({\mathfrak {g}}, K)$-module
 $V_K$ contains a nonzero $K$-fixed vector.

A vector $v \in V$ is said to 
 be {\it{cyclic}}
 if the vector space ${\mathbb{C}}\text{-span} \{\pi(g)v:g \in G\}$
 is dense in $V$.  
If $W$ is a proper $G$-invariant closed subspace
 of $V$, 
 then $v \mod W$ is a cyclic vector
 in the quotient representation 
 on $V/W$. 
For a $K$-finite vector $v$, 
 $v$ is cyclic in $\pi$
 if and only if $v$ is cyclic 
 in the underlying $({\mathfrak {g}}, K)$-module
 $\pi_K$
 in the sense 
 that $U({\mathfrak {g}}_{\mathbb{C}})v =V_K$.

\subsection{Harish-Chandra isomorphism}
\label{subsec:HC}

We review
 the standard normalization
 of the Harish-Chandra isomorphism
 of the ${\mathbb{C}}$-algebra
 $\ZG G$, 
 where we recall from Introduction
 that 
\[
\ZG G
=
U({\mathfrak{g}}_{\mathbb{C}})^G
\equiv
\{
u \in U({\mathfrak{g}}_{\mathbb{C}}):
\operatorname{Ad}(g)u = u
\,\,
\text{ for all }\,\,
g \in G
\}.
\]  
For a connected $G$, 
 $\ZG G$ is equal to the center 
 ${\mathfrak {Z}}({\mathfrak {g}}_{\mathbb{C}})$
 of $U({\mathfrak {g}}_{\mathbb{C}})$.

Let ${\mathfrak{j}}$ be 
 a Cartan subalgebra of 
 ${\mathfrak{g}}$, 
 ${\mathfrak{j}}_{\mathbb{C}}
 =
 {\mathfrak{j}} \otimes_{\mathbb{R}}  
 {\mathbb{C}}
$,
 and
${\mathfrak{j}}_{\mathbb{C}}^{\vee}
 =
 \invHom
 {\mathbb{C}}
 {{\mathfrak{j}}_{\mathbb{C}}}
 {\mathbb{C}}
$. 
We set 
\[
   W({\mathfrak{j}}_{\mathbb{C}})
   :=N_{\widetilde {G}}({\mathfrak{j}}_{\mathbb{C}})
   /Z_{\widetilde {G}}({\mathfrak{j}}_{\mathbb{C}}),
\] 
 where $\widetilde G$ is the group 
 generated by 
 $\operatorname{Ad}(G)$
 and the group $\operatorname{Int}({\mathfrak {g}}_{\mathbb{C}})$
 of inner automorphisms.   
For a connected $G$, 
$W({\mathfrak {j}}_{\mathbb{C}})$
 is the Weyl group
 for the root system
 $\Delta({\mathfrak {g}}_{\mathbb{C}},{\mathfrak {j}}_{\mathbb{C}})$.

Fix a positive system
 $\Delta^+({\mathfrak{g}}_{\mathbb{C}},{\mathfrak{j}}_{\mathbb{C}})$,
 and write ${\mathfrak{n}}_{\mathbb{C}}^+$
 for the sum of the root spaces
 belonging 
 to $\Delta^+(\mathfrak{g}_{\mathbb{C}},{\mathfrak{j}}_{\mathbb{C}})$,
 and ${\mathfrak{n}}_{\mathbb{C}}^-$
 for $\Delta^-(\mathfrak{g}_{\mathbb{C}},{\mathfrak{j}}_{\mathbb{C}})$.  
We set
\begin{equation}
\label{eqn:rhog}
\rho_{{\mathfrak{g}}}
 :=
  \frac 1 2
\sum
_{\alpha
 \in \Delta^+(\mathfrak{g}_{\mathbb{C}},
             {\mathfrak{j}}_{\mathbb{C}})}
\alpha
\in {\mathfrak {j}}_{\mathbb{C}}^{\vee}.  
\end{equation}

Let $\gamma':
U({\mathfrak{g}}_{\mathbb{C}})
\to
U({\mathfrak{j}}_{\mathbb{C}})
\simeq
S({\mathfrak{j}}_{\mathbb{C}})
$
 be the projection
 to the second factor
 of the decomposition
$
U({\mathfrak{g}}_{\mathbb{C}})
=
(
{\mathfrak{n}}_{\mathbb{C}}^-
U({\mathfrak{g}}_{\mathbb{C}})
+
U({\mathfrak{g}}_{\mathbb{C}})
{\mathfrak{n}}_{\mathbb{C}}^+)
\oplus U({\mathfrak{j}}_{\mathbb{C}})
$. 
Then we have the Harish-Chandra isomorphism
\begin{equation}
\label{eqn:HCisom}
\ZG G
=
U({\mathfrak{g}}_{\mathbb{C}})^G
\underset \gamma{\overset\sim\to}
S({\mathfrak{j}}_{\mathbb{C}})
^{W({\mathfrak{j}}_{\mathbb{C}})},
\end{equation}
where 
$
  \gamma :
U({\mathfrak{g}}_{\mathbb{C}})
\to
S({\mathfrak{j}}_{\mathbb{C}})
$
is defined by
$
\langle \gamma(u), \lambda\rangle
=
\langle \gamma'(u), \lambda - \rho_{{\mathfrak{g}}}\rangle
$
 for all
 $
\lambda \in {\mathfrak{j}}_{\mathbb{C}}^{\vee}.  
$

Then any element 
 $\lambda \in {\mathfrak {j}}_{\mathbb{C}}^{\vee}$
 gives a ${\mathbb{C}}$-algebra
 homomorphism
 $\chi_{\lambda}: \ZG G \to {\mathbb{C}}$
 via the isomorphism
 \eqref{eqn:HCisom}, 
 and $\chi_{\lambda}=\chi_{\lambda'}$
 if and only if
 ${\lambda}'= w{\lambda}$
 for some $w \in W({\mathfrak {j}}_{\mathbb{C}})$.  
This correspondence 
 yields a bijection:
\begin{equation}
\label{eqn:ZG}
  \HomCalg{\ZG G}{{\mathbb{C}}}
  \simeq
  {\mathfrak {j}}_{{\mathbb{C}}}^{\vee}/W({\mathfrak{j}}_{\mathbb{C}}), 
\quad
  \chi_{\lambda} \leftrightarrow \lambda.  
\end{equation}

In our normalization,
 the $\ZG G$-infinitesimal character 
 of the trivial representation ${\bf{1}}$
 of $G$ is given by $\rho_{{\mathfrak {g}}}$.

For $\lambda \in {\mathfrak {j}}_{\mathbb{C}}^{\vee}
/W({\mathfrak {j}}_{\mathbb{C}})$, 
 we set
\begin{align*}
C^{\infty}(G;\chi_{\lambda}^R)
:=&
\{f \in C^{\infty}(G)
:
R_u f = \chi_{\lambda}(u) f
\,\,\text{ for any }\,\, u \in \ZG G\}, 
\\
C^{\infty}(G;\chi_{\lambda}^L)
:=&
\{f \in C^{\infty}(G)
:
L_u f = \chi_{\lambda}(u) f
\,\,\text{ for any }\,\, u \in \ZG G\}.  
\end{align*}
Then we have 
 $C^{\infty}(G;\chi_{\lambda}^R)
 =
 C^{\infty}(G;\chi_{-\lambda}^L)$.

Let $H$ be a closed subgroup
 of $G$.  
Since the action of $\ZG G$
 on $C^{\infty}(G)$
 via $R$ (and via $L$)
 commutes with the right $H$-action,
$R_u$ and $L_u$
$(u \in \ZG G)$ 
induce differential operators 
 on $G/H$.  
Thus, 
 for $\lambda \in {\mathfrak {j}}_{\mathbb{C}}^{\vee}
/W({\mathfrak {j}}_{\mathbb{C}})$, 
 we can define 
\begin{align*}
C^{\infty}(G/H;\chi_{\lambda}^R)
:=&
\{f \in C^{\infty}(G/H)
  :
  R_u f = \chi_{\lambda}(u) f
\,\,\text{ for any } \,\, u \in \ZG G\}, 
\\
C^{\infty}(G/H;\chi_{\lambda}^L)
:=&
\{f \in C^{\infty}(G/H)
  :
  L_u f = \chi_{\lambda}(u) f
\,\,\text{ for any } \,\, u \in \ZG G\}.  
\end{align*}

\subsection{Shintani functions of moderate growth}
\label{subsec:mod}

Without loss of generality,
 we may and do assume 
 that a real reductive linear Lie group $G$ is realized as a closed subgroup 
 of $GL(n,{\mathbb{R}})$
 such that $G$ is stable under the transpose
 of matrix $g \mapsto {}^{t\!} g$
 and $K=O(n) \cap G$.  
For $g \in G$ we define a map $\| \cdot \|
: G \to {\mathbb{R}}$
 by 
\[
\|g\|
:=\| g \oplus {}^t g^{-1}\|_{\operatorname{op}}
\]
where $\| \cdot \|_{\operatorname{op}}$
 is the operator norm of $M(2n,{\mathbb{R}})$.  
A continuous representation $\pi$ of $G$ 
on a Fr{\'e}chet space $V$
 is said to be of {\it{moderate growth}}
 if for each continuous semi-norm
 $|\cdot |$ on $V$ 
 there exist a continuous semi-norm
 $|\cdot |'$ on $V$
 and a constant $d \in {\mathbb{R}}$
 such that
\[
   |\pi(g)u| \le \| g \|^d |u|'
\quad
\text{for }
g \in G, u \in V.  
\]
For any admissible representation $(\pi, {\mathcal{H}})$
 such that ${\mathcal{H}}$ is a Banach space, 
 the smooth representation $(\pi^{\infty}, {\mathcal{H}}^{\infty})$
 has moderate growth.  
We say $(\pi^{\infty}, {\mathcal{H}}^{\infty})$
 is an {\it{admissible smooth representation}}.  
By the Casselman--Wallach globalization theory, 
there is a canonical equivalence of categories
 between the category ${\mathcal{HC}}$
 of $({\mathfrak {g}}, K)$-modules
 of finite length 
 and the category of admissible smooth representations 
 of $G$ (\cite[Chapter 11]{WaI}).  
In particular,
 the Fr{\'e}chet representation
 $\pi^{\infty}$ is uniquely 
 determined by its underlying
 $({\mathfrak {g}}, K)$-module.  
We say $\pi^{\infty}$
 is the {\it{smooth globalization}}
 of $\pi_K \in {\mathcal{HC}}$.

For simplicity,
 by an {\it{irreducible smooth representation}}
we shall mean an irreducible admissible smooth representation
 of $G$.

\begin{definition}
\label{def:mod}
{\rm{
A smooth function $f$ on $G$
 is said to have {\it{moderate growth}}
 if $f$ satisfies the following three properties:
\begin{enumerate}
\item[(1)]
$f$ is right $K$-finite.  
\item[(2)]
$f$ is $\ZG G$-finite.  
\item[(3)]
There exists a constant $d \in {\mathbb{R}}$
 (depending on $f$)
 such that if $u \in U({\mathfrak{g}}_{\mathbb{C}})$
 then there exists $C \equiv C(u)$
 satisfying 
\[
  |(R_u f)(x)| \le C\| x \|^d
  \qquad
  (x \in G).  
\]
\end{enumerate}
We denote by $C_{\operatorname{mod}}^{\infty}(G)$
 the space of all $f \in C^{\infty}(G)$
 having moderate growth.  
}}
\end{definition}
If $(\pi, V)$ is an admissible representation
 of moderate growth,
 then the matrix coefficient 
 $G \to {\mathbb{C}}$, 
 $g \mapsto \langle \pi(g) v, u\rangle$
 belongs to $C_{\operatorname{mod}}^{\infty}(G)$
 for any $v \in V_K$
 and any linear functional $u$
 of the Fr{\'e}chet space $V$.

We define the space
 of Shintani functions
 of moderate growth by 
\begin{equation}
\label{eqn:Shmod}
\operatorname{Sh}_{\operatorname{mod}}(\lambda,\nu)
:=
\operatorname{Sh}(\lambda,\nu)
\cap 
C_{\operatorname{mod}}^{\infty}(G).  
\end{equation}

\section{Finite-multiplicity properties
 of branching laws}
\label{sec:Shfin}

We are ready to make a precise statement
 of Theorem \ref{thm:A}, 
 and enrich it 
 by adding some more equivalent conditions.  
The main results
 of this section is Theorem \ref{thm:dozen}.  

\subsection{Finite-multiplicity properties 
 of branching laws}
\label{subsec:dozen}
\begin{theorem}
\label{thm:dozen}
The following twelve conditions
 on a pair of real reductive algebraic groups 
 $G \supset G'$
 are equivalent:
\begin{enumerate}
\item[{\rm{(i)}}]
{\rm{(PP)}}\enspace
There exist minimal parabolic subgroups
 $P$ and $P'$
 of $G$ and $G'$, 
respectively,
 such that $P P'$ is open in $G$.  

\item[{\rm{(ii)}}]
$(\operatorname{Sh})$\enspace
$\dim_{\mathbb{C}} \operatorname{Sh} (\lambda,\nu) < \infty$
 for any pair $(\lambda,\nu)$
 of $(\ZG G, \ZG {G'})$-infinitesimal characters.  

\item[{\rm{(iii)}}]
$(\operatorname{Sh}_{\operatorname{mod}})$\enspace
$\dim_{\mathbb{C}} \operatorname{Sh}_{\operatorname{mod}} (\lambda,\nu) < \infty$
 for any pair $(\lambda,\nu)$
 of $(\ZG G, \ZG {G'})$-infinitesimal characters.  

\item[{\rm{(iv)}}]
$(\operatorname{Sh}_{\operatorname{mod}})_{\bf{1}}$
\enspace
$\dim_{\mathbb{C}} \operatorname{Sh}_{\operatorname{mod}} (\rho_{{\mathfrak {g}}}, \rho_{{\mathfrak {g}}'}) < \infty$.  

\item[{\rm{(v)}}]
$(\infty \downarrow)$
\enspace
$\dim_{\mathbb{C}} \invHom{G'}{\pi^{\infty}|_{G'}}{\tau^{\infty}}< \infty$ 
 for any pair $(\pi^{\infty}, {\tau^{\infty}})$
 of admissible smooth representations
 of $G$ and $G'$.  

\item[{\rm{(vi)}}]
$(\infty \downarrow)_K$
\enspace
$\dim_{\mathbb{C}} \invHom{G'}{\pi^{\infty}|_{G'}}{\tau^{\infty}}< \infty$
 for any pair $(\pi^{\infty}, {\tau^{\infty}})$
 of admissible smooth representations
 of $G$ and $G'$
 such that $\pi^{\infty}$ and $(\tau^{\infty})^{\vee}$
 have cyclic spherical vectors.  

\item[{\rm{(vii)}}]
$({\mathcal{H}}\downarrow)$
\enspace
$\dim_{\mathbb{C}} \invHom{G'}{\pi|_{G'}}{\tau}< \infty$
 for any pair $(\pi, \tau)$
 of admissible Hilbert representations
 of $G$ and $G'$.  

\item[{\rm{(viii)}}]
$({\mathcal{H}}\downarrow)_K$
\enspace
$\dim_{\mathbb{C}} \invHom{G'}{\pi|_{G'}}{\tau}< \infty$
 for any pair $(\pi, \tau)$
 of admissible Hilbert representations
 of $G$ and $G'$
 such that $\pi$ and $\tau^{\vee}$
 have cyclic spherical vectors.  

\item[{\rm{(ix)}}]
$(\infty \otimes)$
\enspace
$\dim_{\mathbb{C}} \invHom{G'}{\pi^{\infty} \otimes \tau^{\infty}} {\mathbb{C}}< \infty$
 for any pair $(\pi^{\infty}, {\tau^{\infty}})$
 of admissible smooth representations
 of $G$ and $G'$.    

\item[{\rm{(x)}}]
$(\infty \otimes)_K$
\enspace
$\dim_{\mathbb{C}} \invHom{G'}{\pi^{\infty} \otimes \tau^{\infty}} {\mathbb{C}}< \infty$
 for any pair $(\pi^{\infty}, {\tau^{\infty}})$
 of admissible smooth representations
 of $G$ and $G'$
 such that $\pi^{\infty}$ and $\tau^{\infty}$
 have cyclic spherical vectors.  

\item[{\rm{(xi)}}]
$({\mathcal{H}} \otimes)$
\enspace
$\dim_{\mathbb{C}} \invHom{G'}{\pi \otimes \tau} {\mathbb{C}}< \infty$
 for any pair $(\pi, \tau)$
 of admissible Hilbert representations
 of $G$ and $G'$.  

\item[{\rm{(xii)}}]
$({\mathcal{H}} \otimes)_K$
\enspace
$\dim_{\mathbb{C}} \invHom{G'}{\pi \otimes \tau} {\mathbb{C}}< \infty$
 for any pair $(\pi, \tau)$
 of admissible Hilbert representations
 of $G$ and $G'$
 such that $\pi$ and $\tau$
 have cyclic spherical vectors.  
\end{enumerate}
\end{theorem}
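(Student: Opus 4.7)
The plan is to arrange the twelve conditions as a grid with three ``flavors'' (Shintani functions, Hom-type symmetry breaking, invariant bilinear forms) and pass between them with (i) as the keystone. The proof decomposes into three tasks: (a) purely formal equivalences within (v)--(xii); (b) the passage between the Shintani conditions (ii)--(iv) and the Hom-type conditions via the three realizations listed in the introduction; and (c) the geometric content (i)$\Leftrightarrow$(v), furnished by the theory of real spherical homogeneous spaces.

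For task (a), I apply Lemma \ref{lem:rest}(2) to obtain the pairwise identifications (v)$\Leftrightarrow$(ix), (vi)$\Leftrightarrow$(x), (vii)$\Leftrightarrow$(xi), (viii)$\Leftrightarrow$(xii) via $\invHom{G'}{\pi|_{G'}}{\tau} \simeq \invHom{G'}{\pi \otimes \tau^{\vee}}{{\mathbb{C}}}$. The Hilbert-to-smooth injection in Lemma \ref{lem:rest}(1) gives one direction of (v)$\Leftrightarrow$(vii), and the Casselman--Wallach globalization theorem---which makes $\pi^{\infty}$ canonical among Banach globalizations of a fixed $({\mathfrak{g}},K)$-module of finite length---supplies the reverse control. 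The ``spherical vs.\ general'' step, e.g.\ (vi)$\Rightarrow$(v), uses Casselman's subrepresentation theorem: any admissible smooth $\pi^{\infty}$ is a subquotient of a principal series $\operatorname{Ind}_P^G(\chi)$, which is cyclic and $K$-spherical, so finite multiplicities for principal series bound those for subquotients after dualizing.

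For task (b), the lower estimate connecting Hom spaces to Shintani functions comes from the first realization of the introduction, to be recorded as Proposition \ref{prop:break}: given a nonzero $T \in \invHom{G'}{\operatorname{Ind}_P^G(\chi_\lambda)^{\infty}|_{G'}}{\operatorname{Ind}_{P'}^{G'}(\chi_\nu)^{\infty}}$, the matrix coefficient $g \mapsto \langle T(\pi(g)v_0), w_0^{\vee}\rangle$ against $K$- and $K'$-spherical vectors produces an element of $\operatorname{Sh}_{\operatorname{mod}}(\lambda,\nu)$, and careful choices of cyclic spherical data force injectivity. Conversely, the second realization identifies $\operatorname{Sh}(\lambda,\nu)$ with the joint $\ZG{G} \otimes \ZG{G'}$-eigenspace inside $C^{\infty}((G \times G')/\operatorname{diag}G')^{K \times K'}$, which reduces the upper estimate on Shintani spaces to a multiplicity bound on a real spherical homogeneous space. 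The trivial inclusions $\operatorname{Sh}_{\operatorname{mod}} \subset \operatorname{Sh}$ and specialization to $(\lambda,\nu) = (\rho_{{\mathfrak{g}}}, \rho_{{\mathfrak{g}}'})$ give the formal implications (ii)$\Rightarrow$(iii)$\Rightarrow$(iv).

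The crux is (i)$\Leftrightarrow$(v). Condition (PP) is precisely the statement that $X := (G \times G')/\operatorname{diag}G'$ is real spherical as a $(G \times G')$-space, since a minimal parabolic of $G \times G'$ has the form $P \times P'$ and the open-orbit criterion on $X \simeq G$ (with action $(g_1, g_2) \cdot g = g_1 g g_2^{-1}$) reduces to openness of $PP'$ in $G$. By the main theorem of \cite{xtoshitoshima}, real sphericity of $X$ is equivalent to finite multiplicity of $\invHom{G \times G'}{(\pi \boxtimes \tau^{\vee})^{\infty}}{C^{\infty}(X)}$ for all admissible smooth $\pi, \tau$, and combining with Proposition \ref{prop:Frob} and Lemma \ref{lem:rest}(2) this translates directly to (v). This is the heavy analytic input and I expect it to be the main obstacle; the remaining converse moves, in particular closing the cycle from the weakest hypothesis (iv) back to (i), rely on showing that when (PP) fails the second realization already produces an infinite-dimensional family of Shintani functions at the infinitesimal character $(\rho_{{\mathfrak{g}}}, \rho_{{\mathfrak{g}}'})$---constructed via a Poisson-type transform applied to distributions supported on a positive-dimensional family of $(P \times P')$-orbits in $G$ witnessing the non-openness of $PP'$.
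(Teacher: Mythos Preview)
Your architecture is right, but task (a) contains three genuine errors that leave the cycle open.

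First, Lemma~\ref{lem:rest}(2) is stated and proved only for Hilbert representations; for smooth representations the paper has only the injection of Lemma~\ref{lem:rest}(3). So you get (vii)$\Leftrightarrow$(xi) and (viii)$\Leftrightarrow$(xii), but only the one-way implications (ix)$\Rightarrow$(v) and (x)$\Rightarrow$(vi), not equivalences. Second, your (vii)$\Rightarrow$(v) move via Casselman--Wallach is backwards: the injection of Lemma~\ref{lem:rest}(1) goes $\invHom{G'}{\pi}{\tau}\hookrightarrow\invHom{G'}{\pi^\infty}{\tau^\infty}$, so finiteness of Hilbert intertwiners gives no bound on the a priori larger space of smooth intertwiners. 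Third, your (vi)$\Rightarrow$(v) argument via the subrepresentation theorem fails because an arbitrary irreducible admissible representation embeds into a principal series $\operatorname{Ind}_P^G(\sigma\otimes e^\mu)$ with $\sigma$ a general (typically nontrivial) $M$-type, hence not spherical; you cannot reduce general $\pi^\infty$ to spherical principal series this way.

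The paper's remedy is to abandon these direct moves and route everything through (i). One keeps only the easy one-way implications (ix)$\Rightarrow$(v)$\Rightarrow$(vii)$\Rightarrow$(viii), (ix)$\Rightarrow$(x)$\Rightarrow$(vi)$\Rightarrow$(viii), together with (vii)$\Leftrightarrow$(xi) and (viii)$\Leftrightarrow$(xii), so that every condition among (v)--(xii) implies (viii). Then one proves (viii)$\Rightarrow$(i) directly: if $P'$ has no open orbit in $G/P$ there are infinitely many disjoint $P'$-invariant open sets, and their characteristic functions produce infinitely many linearly independent elements of $\invHom{G'}{L^2(G/P,\Omega_{G/P})}{L^2(G'/P')}$ (Proposition~\ref{prop:L}); both of these Hilbert representations have cyclic spherical vectors, contradicting (viii). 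This is exactly the construction you sketch for (iv)$\Rightarrow$(i) in your last paragraph---feeding those same Hilbert intertwiners through Proposition~\ref{prop:break} manufactures the Shintani functions of moderate growth---but you need to recognize that it already delivers (viii)$\Rightarrow$(i) at the Hilbert level and thereby replaces all three of your faulty shortcuts in task (a).
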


\subsection{Outline of the proof of Theorem \ref{thm:dozen}}
\label{subsec:outline}

The following implications are obvious:
\[
{\rm{(ii)}}\enspace(\operatorname{Sh})
\,\,\Rightarrow\,\,
{\rm{(iii)}}\enspace
(\operatorname{Sh}_{\operatorname{mod}})
\,\,\Rightarrow\,\,
{\rm{(iv)}}\enspace
(\operatorname{Sh}_{\operatorname{mod}})_{\bf{1}}.  
\]

By Lemma \ref{lem:rest}, 
 we have the following inclusive relations
 and isomorphism.  
\[
\invHom{G'}{\pi^{\infty} \otimes(\tau^{\vee})^{\infty}}{{\mathbb{C}}}
\supset
\invHom{G'}{\pi^{\infty}}{\tau^{\infty}}
\supset 
\invHom{G'}{\pi} {\tau}
\simeq 
\invHom{G'}{\pi \otimes \tau^{\vee}}{{\mathbb{C}}}.  
\]
In turn, 
 we have the obvious implications
 and equivalences as below.  
\begin{alignat*}{7}
&{\rm{(ix)}}\enspace (\infty \otimes) 
&&\,\,\Longrightarrow\,\,
&&{\rm{(v)}}\enspace (\infty \downarrow)
&&\,\,\Longrightarrow\,\, 
&& {\rm{(vii)}}\enspace ({\mathcal{H}}\downarrow)
&&\,\,\Longleftrightarrow\,\, 
&&{\rm{(xi)}}\enspace({\mathcal{H}}\otimes)
\\
& \Downarrow && &&\Downarrow && && \Downarrow && && \Downarrow
\\
&{\rm{(x)}}\enspace (\infty \otimes)_K
&&\,\,\Longrightarrow\,\, 
&&{\rm{(vi)}}\enspace (\infty \downarrow)_K
&&\,\,\Longrightarrow\,\, 
&& {\rm{(viii)}}\enspace ({\mathcal{H}}\downarrow)_K
&&\,\,\Longleftrightarrow\,\, 
&&{\rm{(xii)}}\enspace({\mathcal{H}}\otimes)_K.  
\end{alignat*}

The remaining non-trivial implications
 are 
\begin{eqnarray*}
&  {\rm{(viii)}}\enspace({\mathcal{H}}\downarrow)_K
\,\,\text{ or }\,\,
  {\rm{(iv)}}\enspace(\operatorname{Sh}_{\operatorname{mod}})_{\bf{1}}
\\
&
\Downarrow
\\
&  {\rm{(i)}}\enspace {\rm{(PP)}}
\\
&\Downarrow 
\\
&  {\rm{(ii)}}\enspace(\operatorname{Sh})
\,\,\text{ and }\,\,
  {\rm{(ix)}}\enspace(\infty \otimes).  
\end{eqnarray*}
We discuss the geometric property (PP)
 in Section \ref{subsec:PP}.  
Then the implications
\[
  {\rm{(i)}}\enspace\text{(PP)}
  \Rightarrow {\rm{(ii)}}\enspace\text{(Sh)} 
  \text{ and }{\rm{(ix)}}\enspace (\infty \otimes)
\]
 are given in Propositions \ref{prop:PPSh} and 
 \ref{prop:PPtensor}, 
 respectively.

The implication
\[
  {\rm{(viii)}}\enspace({\mathcal{H}}\downarrow)_K 
  \Rightarrow 
  {\rm{(i)}}\enspace \text{(PP)}
\]
 is proved in Proposition \ref{prop:L}, 
 and the implication 
\[
  {\rm{(iv)}}\enspace(\operatorname{Sh}_{\operatorname{mod}})_{\bf{1}}
   \Rightarrow
  {\rm{(i)}}\enspace \text{(PP)}
\]
 is proved in Corollary \ref{cor:Shtriv}.

The relationship of $\invHom{G'}{\pi^{\infty}}{\tau^{\infty}}$
 (symmetry breaking operators)
 and $\operatorname{Sh}(\lambda,\nu)$
 (Shintani functions)
 will be discussed in Sections \ref{sec:break} and \ref{sec:PS}.  

\subsection{Invariant trilinear forms}
\label{subsec:tri}
Suppose that $\pi_i^{\infty}$
 are admissible smooth representations
 of a Lie group $G$ 
 on Fr{\'e}chet spaces ${\mathcal{H}}_i^{\infty}$
 $(i=1,2,3)$.  
A continuous trilinear form
\[
 T: {\mathcal{H}}_1^{\infty}
    \times {\mathcal{H}}_2^{\infty}
    \times {\mathcal{H}}_3^{\infty}
    \to 
    {\mathbb{C}}
\]
 is {\it{invariant}}
 if 
\[
  T(\pi_1^{\infty}(g)u_1, 
    \pi_2^{\infty}(g)u_2,
    \pi_3^{\infty}(g)u_3)
 =T(u_1,u_2,u_3)
\quad
\text{ for all }\,\,
 g \in G \,\,
 \text{ and }\,\,
 u_i \in {\mathcal{H}}_i^{\infty}
\,\,
 (i=1,2,3).  
\]
\begin{corollary}
\label{cor:tri}
\begin{enumerate}
\item[{\rm{1)}}]
Suppose $G$ is a real reductive Lie group.  
Then the following four conditions on $G$ are equivalent:
\begin{enumerate}
\item[{\rm{(i)}}]
$(G \times G \times G)/\operatorname{diag} G$
 is real spherical
 as a $(G \times G \times G)$-space.  
\item[{\rm{(ii)}}]
{\rm{(Shintani functions in the group case)}}\enspace
The space $\operatorname{Sh}((\lambda_1, \lambda_2), \lambda_3)$
 of Shintani functions
 for $(G \times G, \operatorname{diag} G)$
 is finite-dimensional 
 for any triple
 of $\ZG G$-infinitesimal characters
 $\lambda_1$, $\lambda_2$ and $\lambda_3$.  
\item[{\rm{(iii)}}]
{\rm{(Symmetry breaking for the tensor product)}}\enspace
For any triple of admissible smooth representations
 $\pi_1^{\infty}$, $\pi_2^{\infty}$, 
 and $\pi_3^{\infty}$ of $G$, 
\[
\dim_{\mathbb{C}} \operatorname{Hom}_G(\pi_1^{\infty} \otimes \pi_2^{\infty}, 
\pi_3^{\infty})< \infty.  
\]
\item[{\rm{(iv)}}]
{\rm{(Invariant trilinear form)}}\enspace
For any triple of admissible smooth representations
 $\pi_1^{\infty}$, $\pi_2^{\infty}$ and $\pi_3^{\infty}$ of $G$, 
 the space of invariant trilinear forms
 is finite-dimensional.  
\end{enumerate}
\item[{\rm{2)}}]
Suppose that $G$ is a simple Lie group.  
Then one of {\rm{(}}therefore any of{\rm{)}} the above four 
 equivalent conditions
 is fulfilled
 if and only if either $G$ is compact
 or ${\mathfrak {g}}$ is isomorphic to ${\mathfrak{o}}(n,1)$
 $(n \ge 2)$.  
\end{enumerate}
\end{corollary}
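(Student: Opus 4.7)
The plan is to derive Part 1 by specializing Theorem \ref{thm:dozen} to the pair $(\tilde G,\tilde G')=(G\times G,\operatorname{diag} G)$ and matching up the four conditions of the corollary with four of the twelve conditions of that theorem. Under $\ZG{G\times G}\cong \ZG{G}\otimes \ZG{G}$, a $\ZG{G\times G}$-infinitesimal character is exactly a pair $(\chi_{\lambda_1},\chi_{\lambda_2})$, so condition (ii) of the corollary is condition (ii) of Theorem \ref{thm:dozen} for $(\tilde G,\tilde G')$. Any irreducible admissible smooth representation of $G\times G$ is an outer tensor product $\pi_1^{\infty}\boxtimes\pi_2^{\infty}$, and its restriction to $\operatorname{diag} G$ is $\pi_1^{\infty}\otimes\pi_2^{\infty}$; since finite multiplicity for a finite-length representation is controlled by its irreducible subquotients, condition (iii) of the corollary corresponds to condition (v) of Theorem \ref{thm:dozen}. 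Finally, the canonical identification of invariant trilinear forms on $\pi_1^{\infty}\times\pi_2^{\infty}\times\pi_3^{\infty}$ with $\invHom{G}{(\pi_1^{\infty}\boxtimes\pi_2^{\infty})\otimes\pi_3^{\infty}}{\mathbb{C}}$ makes condition (iv) correspond to condition (ix).

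Next, I would show that condition (i) of the corollary is equivalent to the geometric property (PP) of Theorem \ref{thm:dozen}(i) for $(G\times G,\operatorname{diag} G)$. A minimal parabolic of $G\times G$ has the form $P_1\times P_2$ and a minimal parabolic of $\operatorname{diag} G$ is $\operatorname{diag} P_3$ for minimal parabolics $P_1,P_2,P_3\subset G$. Under the $(G\times G\times G)$-equivariant isomorphism
\[
(G\times G\times G)/\operatorname{diag} G \xrightarrow{\sim} G\times G,\qquad [g_1,g_2,g_3]\mapsto (g_1g_3^{-1},g_2g_3^{-1}),
\]
the $(P_1\times P_2\times P_3)$-orbit through the base point is visibly $(P_1\times P_2)\operatorname{diag}(P_3)\subset G\times G$. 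Openness of this orbit on $(G^3)/\operatorname{diag} G$ is therefore equivalent to openness of $(P_1\times P_2)\operatorname{diag}(P_3)$ in $G\times G$, i.e.\ to (PP) for the pair $(G\times G,\operatorname{diag} G)$. Combined with the previous paragraph, Part 1 follows directly from Theorem \ref{thm:dozen}.

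For Part 2, I would apply the classification Theorem \ref{thm:PP} to the group-case pair $(\mathfrak{g}\oplus\mathfrak{g},\operatorname{diag}\mathfrak{g})$: items (G1) and (G2) list exactly the simple Lie algebras occurring there, namely the compact simple Lie algebras and $\mathfrak{o}(n,1)$ for $n\geq 2$; the remaining items in Theorem \ref{thm:PP} are not of the group form and hence are irrelevant when $\mathfrak{g}$ is simple. No serious obstacle is anticipated; the main care is in the orbit computation identifying (i) with (PP), and in the mild observation that Jordan--H\"older reductions allow one to pass freely between arbitrary admissible smooth representations of $G\times G$ and outer tensor products of irreducibles when testing finite-multiplicity.
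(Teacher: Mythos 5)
Your proposal is correct and follows exactly the route the paper takes: the paper's own proof is the one-line remark that Part~1 is a special case of Theorem~\ref{thm:dozen} and Part~2 a special case of Theorem~\ref{thm:PP}, both applied to the pair $(G\times G,\operatorname{diag}G)$. Your computation identifying condition~(i) with (PP) via the orbit of $P_1\times P_2\times P_3$ through the base point, and your matching of conditions~(ii), (iii), (iv) with conditions (ii), (v), (ix) of Theorem~\ref{thm:dozen}, are precisely the unwinding the paper leaves implicit.
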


\begin{proof}
The first and second statements
 are special cases
 of Theorems \ref{thm:dozen}
 and \ref{thm:PP}, 
 respectively.  
\end{proof}
\begin{remark}
\label{rem:triple}
{\rm{
As in (vi) and (viii) of Theorem \ref{thm:dozen}, 
 the conditions (iii) and (iv)
 of Corollary \ref{cor:tri}
 are equivalent to the analogous statements
 by replacing $\pi_j^{\infty}$
 ($j=1,2,3$)
 with spherical ones.  
}}
\end{remark}
\begin{remark}
{\rm{
The equivalence (i) $\Leftrightarrow$ (ii)
 was first formulated
 in \cite{xtoshi95}
 with a sketch of proof.  
}}
\end{remark}
\begin{example}
\label{ex:gBR}
{\rm{
For $G=O(n,1)$, 
 a meromorphic family
 of invariant trilinear forms
 for spherical principal series
 representations
 was constructed in \cite{CKOP}.  
}}
\end{example}

\section{Real spherical manifolds and Shintani functions}
\label{sec:sph}
In this section,
 we regard Shintani functions
 as smooth functions
 on the homogeneous space
 $(G \times G')/\operatorname{diag}G'$, 
 and apply the theory of real spherical 
 homogeneous spaces
 \cite{xtoshitoshima}.  
In particular,
 we give a proof of the implication
 (i) (PP) $\Rightarrow$
 (ii) (Sh) 
 and 
 (ix) ($\infty \otimes$)
 in Theorem \ref{thm:dozen}
 (see Proposition \ref{prop:PPSh}).  
\subsection{Real spherical 
 homogeneous spaces 
 and (PP)}
\label{subsec:PP}
A complex manifold $X_{\mathbb{C}}$ 
 with action of a complex reductive group $G_{\mathbb{C}}$
 is called {\it{spherical}} 
 if a Borel subgroup of $G_{\mathbb{C}}$
 has an open orbit in $X_{\mathbb{C}}$.  
In the real setting, 
 in search of a good framework for global analysis 
 on homogeneous spaces
 which are broader than the usual 
 ({\it{e.g.}} symmetric spaces),
 we proposed to call:
\begin{definition}
[{\cite{xtoshi95}}]
\label{def:realsp}
\rm{
Let $G$ be a real reductive Lie group.  
We say a smooth manifold $X$ with $G$-action
 is {\it{real spherical}}
 if a minimal parabolic subgroup $P$
 of $G$ has an open orbit in $X$.  
}
\end{definition}
The significance of this geometric property
is its application
 to the finite-multiplicity property
 in the regular representation
 of $G$ on $C^{\infty}(X)$, 
 which was proved by using the theory 
 of hyperfunctions
 and regular singularities
 of a system of partial differential equations:
\begin{proposition}
[{\cite[Theorem A and Theorem 2.2]{xtoshitoshima}}]
\label{prop:HP}
Suppose $G$ is a real reductive linear Lie group,
 and $H$ is a closed subgroup.  
If the homogeneous space $G/H$ 
 is real spherical,
 then the regular representation of $G$
 on the Fr{\'e}chet space
 $C^{\infty}(G/H;\chi_{\lambda}^L)$
 is admissible
 for any $\ZG G$-infinitesimal character 
 $\lambda \in {\mathfrak {j}}_{\mathbb{C}}^{\vee}
 /W({\mathfrak {j}}_{\mathbb{C}})$.  
In particular,
\[
\text{
$\invHom{G}{\pi^{\infty}}{C^{\infty}(G/H)}$
 is finite-dimensional
}
\]
for any smooth admissible representation
 $\pi^{\infty}$ of $G$.  
\end{proposition}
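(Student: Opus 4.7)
The plan is to prove admissibility of the $G$-representation on $C^\infty(G/H;\chi_\lambda^L)$ by analyzing the holonomic system of differential equations cut out by the infinitesimal character condition, using hyperfunction boundary-value theory on an equivariant compactification of $G/H$; the ``in particular'' clause will then drop out by a short formal argument. As a first reduction, admissibility amounts to bounding each $K$-isotypic multiplicity, and for an irreducible $K$-representation $(\tau, V_\tau)$ Frobenius reciprocity (Proposition \ref{prop:Frob}) identifies
\[
\operatorname{Hom}_K\bigl(V_\tau,\, C^\infty(G/H;\chi_\lambda^L)\bigr)
\]
with the space of $K\cap H$-invariant $V_\tau^\vee$-valued smooth sections on $G/H$ annihilated by the system $\{R_u - \chi_\lambda(u)\,\mathrm{id}\}_{u\in \ZG G}$. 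So the question becomes the finite-dimensionality of the solution space of this system of PDEs on the real analytic manifold $G/H$.

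The key analytic step is to embed $G/H$ into a $G$-equivariant real analytic compactification (for instance one built from the flag variety $G/P$ by successive real blow-ups along $P$-orbit closures) so that the $\ZG G$-system extends to a holonomic system with regular singularities along the boundary in the sense of Kashiwara--Oshima. The regular-singularity theorem then shows that each hyperfunction joint eigenfunction is determined by finite-dimensional boundary data on each stratum, with the indicial roots controlled by $\lambda$ via the Harish-Chandra isomorphism. Here the hypothesis that $G/H$ is real spherical enters crucially: combined with the Bruhat-type decomposition it forces the number of $P$-orbits on $G/H$ to be finite, so the total boundary data lives in a finite-dimensional space and the $K$-type multiplicity is finite.

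For the ``in particular'' clause, any $G$-homomorphism $T : \pi^\infty \to C^\infty(G/H)$ from an admissible smooth representation factors through $C^\infty(G/H;\chi_\lambda^L)$ for $\chi_\lambda$ the $\ZG G$-infinitesimal character of $\pi^\infty$. By the Casselman--Wallach theory recalled in Section \ref{subsec:mod}, $T$ is determined by the underlying $({\mathfrak g}, K)$-homomorphism on $\pi_K$, which is generated by finitely many $K$-types; each target $K$-isotype is finite-dimensional by the admissibility just proved, and the finiteness of the total $\operatorname{Hom}$ space follows. The main obstacle is the middle step: constructing an equivariant compactification on which the $\ZG G$-system has regular singularities and extracting genuinely finite-dimensional boundary data at each $P$-orbit. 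This is the technical heart of \cite{xtoshitoshima}, drawing on Kashiwara--Oshima's theory of systems of differential equations with regular singularities together with a careful analysis of the indicial equations determined by the Harish-Chandra parameter; the other steps are essentially formal.
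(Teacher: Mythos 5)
Your broad outline---reduce admissibility to finite $K$-type multiplicities, control the resulting holonomic system by Kashiwara--Oshima regular-singularity theory and hyperfunction boundary values, and extract finiteness from real sphericity---is the right circle of ideas, and the ``in particular'' clause is handled correctly. But the analytic heart is set up in the wrong space. The parenthetical ``one built from the flag variety $G/P$ by successive real blow-ups along $P$-orbit closures'' does not describe a compactification of $G/H$ ($G/P$ is already compact and of unrelated dimension), and there is no available regular-singularity theory for the $\ZG{G}$-system on a compactification of a general real spherical $G/H$; you give no reason such an extended system would have regular singularities or finite indicial data. In addition, the Frobenius step is garbled: $\operatorname{Hom}_K(V_\tau, C^\infty(G/H;\chi_\lambda^L))$ is not the space of $K\cap H$-invariant $V_\tau^\vee$-valued sections on $G/H$, but rather the space of $V_\tau^\vee$-valued functions on $G$ that are $\tau^\vee$-covariant under left $K$-translation and invariant under right $H$-translation---equivalently, after $g\mapsto g^{-1}$, $H$-invariant sections over $G/K$ of $G\times_K V_\tau^\vee$ (cf.\ Lemma \ref{lem:ShGK}); what you wrote is only the value of the covariance condition at the base point.

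The argument in \cite{xtoshitoshima} runs the boundary-value analysis on the Riemannian symmetric side: the $\tau$-isotype of $C^\infty(G/H;\chi_\lambda^L)$ is realized as $H$-invariant joint eigensections on $G/K$; the classical Oshima--Sekiguchi/Kashiwara--Oshima theory (cf.\ Proposition \ref{prop:Poisson} and the use made of it in Section \ref{subsec:nz}) gives an injective, $G$-equivariant boundary-value map from such eigensections on the Oshima compactification of $G/K$ into hyperfunction-valued principal series $\mathcal{B}(G/P;\mu)$, so $H$-invariance becomes a condition on boundary data living on the flag manifold. Real sphericity---an open $P$-orbit in $G/H$, equivalently an open $H$-orbit in $G/P$---forces (for algebraic $H$) finitely many $H$-orbits on the compact space $G/P$, and each orbit contributes a finite-dimensional space of $H$-invariant hyperfunctions with prescribed indicial roots. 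That, not a direct compactification of $G/H$, is where the finiteness comes from; as written, your ``regular-singularity theorem'' step is a genuine gap.
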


Suppose that $G'$ is an algebraic reductive subgroup of 
 $G$.  
Let $P'$ be a minimal parabolic subgroup
 of $G'$.  

\begin{definition}
[{\cite{xtoshitoshima}}]
\label{def:pp}
\rm{
We say the pair $(G,G')$ satisfies (PP)
 if one of the following five equivalent
 conditions is satisfied.  
\begin{enumerate}
\item[{\rm{(PP1)}}]
$(G \times G')/\operatorname{diag}G'$ is real spherical
 as a $(G \times G')$-space.  

\item[{\rm{(PP2)}}]
$G/P'$ is real spherical as a $G$-space.  

\item[{\rm{(PP3)}}]
$G/P$ is real spherical as
 a $G'$-space. 

\item[{\rm{(PP4)}}]
$G$ has an open orbit in $G/P \times G/P'$
 via the diagonal action.  

\item[{\rm{(PP5)}}]
There are finitely many $G$-orbits in $G/P \times G/P'$
 via the diagonal action.  
\end{enumerate}
}
\end{definition}
The above five equivalent conditions are determined 
 only by the Lie algebras ${\mathfrak {g}}$ and ${\mathfrak {g}}'$.  
Therefore we also say
 that the pair $({\mathfrak {g}}, {\mathfrak {g}}')$
 of Lie algebras
 satisfies (PP).  

\vskip 1pc
Next we consider another property, 
 to be denoted by (BB), 
 which is stronger than (PP).  
Let $G_{\mathbb{C}}$ be a complex Lie group
 with Lie algebra 
 ${\mathfrak{g}}_{\mathbb{C}}={\mathfrak{g}}\otimes_{\mathbb{R}}{\mathbb{C}}$, 
 and $G_{\mathbb{C}}'$ a subgroup of $G_{\mathbb{C}}$
 with complexified Lie algebra 
 ${\mathfrak{g}}_{\mathbb{C}}'
 ={\mathfrak{g}}'\otimes_{\mathbb{R}}{\mathbb{C}}$.  
Let $B$ and $B'$ be Borel subgroups of $G_{\mathbb{C}}$ 
 and $G_{\mathbb{C}}'$, 
 respectively.  

\begin{definition}
\label{def:BB}
{\rm{
We say the pair
 $(G,G')$
 (or the pair $({\mathfrak{g}}, {\mathfrak{g}}')$)
 satisfies {\rm{(BB)}}
 if one of the following five equivalent conditions is satisfied:
\begin{enumerate}
\item[{\rm{(BB1)}}]
$(G_{\mathbb{C}} \times G_{\mathbb{C}}')/
\operatorname{diag}G_{\mathbb{C}}'$
 is spherical 
 as a $(G_{\mathbb{C}} \times G_{\mathbb{C}}')$-space.   
\item[{\rm{(BB2)}}]
$G_{\mathbb{C}}/B'$ is spherical 
 as a $G_{\mathbb{C}}$-space.  

\item[{\rm{(BB3)}}]
 $G_{\mathbb{C}}/B$ is real spherical as
 a $G_{\mathbb{C}}'$-space.

\item[{\rm{(BB4)}}]
$G_{\mathbb{C}}$ has an open orbit in $G_{\mathbb{C}}/B \times G_{\mathbb{C}}/B'$
 via the diagonal action.  
\item[{\rm{(BB5)}}]
There are finitely many $G_{\mathbb{C}}$-orbits
 in $G_{\mathbb{C}}/B \times G_{\mathbb{C}}/B'$
 via the diagonal action.  
\end{enumerate}
}}
\end{definition}
The above five equivalent conditions 
 are determined 
 only by the complexified Lie algebras
 ${\mathfrak {g}}_{\mathbb{C}}$
 and 
 ${\mathfrak {g}}_{\mathbb{C}}'$.  
It follows from \cite[Lemmas 4.2 and 5.3]{xtoshitoshima}
 that we have an implication
\[
  \text{(BB)} \Rightarrow \text{(PP)}.  
\]

\subsection{Shintani functions
 and real spherical homogeneous spaces}
\label{subsec:ShPP}

We return to Shintani functions
 for the pair $G \supset G'$.  
Let $(\lambda, \nu)
 \in {\mathfrak {j}}_{\mathbb{C}}^{\vee}
 /W({\mathfrak {j}}_{\mathbb{C}})
\times  ({\mathfrak {j}}_{\mathbb{C}}')^{\vee}
 /W({\mathfrak {j}}_{\mathbb{C}}')$.  
We begin 
 with an elementary and useful point
 of view:

\begin{lemma}
\label{lem:ShPP}
The multiplication map
\[
\varphi:G \times G' \to G,
\qquad
        (g,h) \mapsto g h^{-1}
\]
induces the following linear isomorphism
\[
   \varphi^{\ast}:
   \operatorname{Sh} (\lambda,\nu)
   \overset \sim \to 
   \invHom{K \times K'}
   {{\bf{1}}\boxtimes{\bf{1}}}
   {C^{\infty}
    ((G \times G')/\operatorname{diag}G';\chi_{\lambda,\nu}^L)}, 
\]
where ${\bf{1}}$ denotes
 the trivial one-dimensional representation
 of the group $K$
 (or that of $K'$).  
\end{lemma}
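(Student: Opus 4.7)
The strategy is to exhibit $\varphi$ as realizing a principal $\operatorname{diag}G'$-bundle over $G$, obtaining a diffeomorphism $(G\times G')/\operatorname{diag}G'\simeq G$, and then to translate each of the three conditions of Definition \ref{def:Sh} into the corresponding invariance/eigenvalue condition on the quotient by a direct equivariance computation.

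First, I would verify that $\varphi$ is a surjective submersion whose fibers are exactly the orbits of the right action of $\operatorname{diag}G'$ on $G\times G'$: constancy on such orbits follows from $\varphi(gg_0',hg_0') = gg_0'(hg_0')^{-1} = gh^{-1}$ for every $g_0'\in G'$, and conversely $\varphi(g_1,h_1)=\varphi(g_2,h_2)$ forces $h_1^{-1}h_2\in G'$ together with $(g_2,h_2)=(g_1,h_1)\cdot(h_1^{-1}h_2,h_1^{-1}h_2)$, so any two points in a common fiber lie in the same $\operatorname{diag}G'$-orbit. Hence $\varphi$ descends to a diffeomorphism
\[
\bar\varphi:(G\times G')/\operatorname{diag}G' \;\overset{\sim}{\longrightarrow}\; G,
\]
and pull-back $\varphi^{\,*}$ is a linear isomorphism between $C^{\infty}(G)$ and $C^{\infty}((G\times G')/\operatorname{diag}G')$.

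It remains to match the conditions. For $X\in\mathfrak{g}$ and $Y\in\mathfrak{g}'$, differentiating $\varphi^{\,*}f(g,h)=f(gh^{-1})$, and in the second slot using $(e^{-tY})^{-1}=e^{tY}$ to absorb the inversion, one finds that the left actions of $X$ on the first factor and of $Y$ on the second factor of $G\times G'$ pull back via $\varphi^{\,*}$ to the natural $L_{X}$ and $R_{Y}$ actions on $G$. Extending these intertwining relations to $U(\mathfrak{g}_{\mathbb{C}})$ and $U(\mathfrak{g}_{\mathbb{C}}')$ and restricting to the centers $\ZG G$ and $\ZG{G'}$---invoking where necessary the identity $C^{\infty}(G;\chi_{\lambda}^{R})=C^{\infty}(G;\chi_{-\lambda}^{L})$ recalled in Section \ref{subsec:HC}---translates Shintani conditions (2) and (3) precisely into the $L$-infinitesimal character $(\lambda,\nu)$ condition on $\varphi^{\,*}f$. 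A parallel group-level computation, applied to $K$ and $K'$, identifies Shintani condition (1) with the $(K\times K')$-invariance of $\varphi^{\,*}f$ for the left regular action on the quotient.

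Combining the bijectivity of $\varphi^{\,*}$ with the matching of conditions yields the claimed isomorphism. The argument is essentially a bookkeeping exercise; the main subtlety to watch is the swap of $L$ and $R$ induced by inversion in the second factor, which is exactly what reconciles the asymmetric $L/R$ form of Definition \ref{def:Sh} with the uniform $L$-form of the condition on the quotient.
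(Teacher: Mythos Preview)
Your proposal is correct and follows essentially the same route as the paper. The paper's proof consists of one displayed identity, $L_X L_Y(\varphi^{\ast}f)=\varphi^{\ast}(L_X R_Y f)$ for $X\in\mathfrak{g}$, $Y\in\mathfrak{g}'$, together with the observation that $\varphi^{\ast}$ is a bijection onto $C^{\infty}((G\times G')/\operatorname{diag}G')$; you supply the same intertwining relation and the same bijection, only with the fiber computation and the $L/R$ bookkeeping written out more explicitly.
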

\begin{proof}
The pull-back of functions
\[
   \varphi^{\ast}:
   C^{\infty}(G)
   \overset \sim \to 
   C^{\infty}((G \times G')/\operatorname{diag}G') 
\]
satisfies
\[
  L_X L_Y(\varphi^{\ast} f)
  =\varphi^{\ast}(L_X R_Y f)
\qquad
\text{for all
 $X \in {\mathfrak {g}}$, 
 $Y \in {\mathfrak {g}}'$
 and $f \in C^{\infty}(G)$.  }
\]
Hence $\varphi^{\ast}$ maps 
 $\operatorname{Sh} (\lambda,\nu)$
 onto the space 
 of $(K \times K')$-invariant functions
 of $C^{\infty}
    ((G \times G')/\operatorname{diag}G';\chi_{\lambda,\nu}^L)$.  
\end{proof}

\begin{proposition}
\label{prop:PPSh}
If $(G,G')$ satisfies {\rm{(PP)}}, 
 then $\dim_{\mathbb{C}}\operatorname{Sh} (\lambda,\nu)<\infty$
 for any pair $(\lambda,\nu)$
 of $(\ZG G, \ZG {G'})$-infinitesimal characters.  
\end{proposition}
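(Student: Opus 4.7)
\textbf{Proof plan for Proposition \ref{prop:PPSh}.}
The plan is to reduce the finite-dimensionality of $\operatorname{Sh}(\lambda,\nu)$ to the admissibility theorem for real spherical homogeneous spaces (Proposition \ref{prop:HP}), applied to the double coset space $(G\times G')/\operatorname{diag}G'$. The bridge is Lemma \ref{lem:ShPP}, which already identifies $\operatorname{Sh}(\lambda,\nu)$ with a space of $(K\times K')$-invariant smooth functions on $(G\times G')/\operatorname{diag}G'$ with prescribed infinitesimal character.

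First I would invoke condition (PP1) in Definition \ref{def:pp}: the pair $(G,G')$ satisfies (PP) if and only if the homogeneous space $X:=(G\times G')/\operatorname{diag}G'$ is real spherical as a $(G\times G')$-space. Thus we may apply Proposition \ref{prop:HP} to the reductive Lie group $\widetilde G:=G\times G'$ and the closed subgroup $\widetilde H:=\operatorname{diag}G'$. The center $\mathfrak{Z}_{\widetilde G}$ is canonically identified with $\mathfrak{Z}_G\otimes \mathfrak{Z}_{G'}$, so the pair $(\lambda,\nu)$ of infinitesimal characters defines a $\mathfrak{Z}_{\widetilde G}$-character, which I will denote $\chi_{(\lambda,\nu)}$. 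Proposition \ref{prop:HP} then yields that the regular representation of $\widetilde G$ on the Fr\'echet space $C^\infty(X;\chi_{(\lambda,\nu)}^L)$ is admissible.

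Since $\widetilde K:=K\times K'$ is a maximal compact subgroup of $\widetilde G$, admissibility means that every irreducible $\widetilde K$-type occurs with finite multiplicity. Specializing to the trivial $\widetilde K$-type gives
\[
\dim_{\mathbb{C}}\invHom{K\times K'}{\mathbf 1\boxtimes \mathbf 1}{C^\infty(X;\chi_{(\lambda,\nu)}^L)}<\infty.
\]
By Lemma \ref{lem:ShPP}, this space is isomorphic (via the pull-back $\varphi^*$) to $\operatorname{Sh}(\lambda,\nu)$, once one checks that the left $\mathfrak{Z}_{\widetilde G}$-action on $\varphi^* f$ matches the Shintani conditions on $f$. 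The computation in the proof of Lemma \ref{lem:ShPP} already supplies the relation $L_X L_Y(\varphi^* f)=\varphi^*(L_X R_Y f)$ for $X\in\mathfrak g$, $Y\in\mathfrak g'$, so after bookkeeping the $L$-vs-$R$ correspondence (using $C^\infty(G;\chi_\lambda^R)=C^\infty(G;\chi_{-\lambda}^L)$ stated in Section \ref{subsec:HC}) the eigencharacter condition transfers correctly. This completes the reduction.

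The only substantive input is the admissibility theorem from \cite{xtoshitoshima} (Proposition \ref{prop:HP}); everything else is formal manipulation of the identification in Lemma \ref{lem:ShPP}. Consequently, the main ``obstacle'' is not really an obstacle in this paper but rather a citation: one must trust the deep result that real sphericality of $G/H$ forces admissibility of $C^\infty(G/H;\chi_\lambda^L)$ as a $G$-representation. Given that, the argument is a three-line deduction: (PP) $\Rightarrow$ real spherical $\Rightarrow$ admissibility of the regular representation with infinitesimal character $(\lambda,\nu)$ $\Rightarrow$ finiteness of the trivial $\widetilde K$-type $\Rightarrow$ finiteness of $\operatorname{Sh}(\lambda,\nu)$.
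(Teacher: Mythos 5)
Your proof is correct and coincides with the paper's own argument: invoke (PP1) to see that $(G\times G')/\operatorname{diag}G'$ is real spherical, apply Proposition \ref{prop:HP} to obtain admissibility of the regular representation of $G\times G'$ on $C^\infty((G\times G')/\operatorname{diag}G';\chi_{\lambda,\nu}^L)$, and conclude via Lemma \ref{lem:ShPP} by extracting the trivial $(K\times K')$-type. The extra "bookkeeping" you mention about the $L$/$R$ eigencharacter transfer is already packaged into the statement of Lemma \ref{lem:ShPP}, so you need not re-derive it.
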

\begin{proof}
Since $(G,G')$ satisfies (PP1), 
 the regular representation
 on the Fr{\'e}chet space
$
   C^{\infty}
    ((G \times G')/\operatorname{diag}G';\chi_{\lambda,\nu}^L)
$ 
 is admissible 
 as a representation 
of the direct product group $G \times G'$
 by Proposition \ref{prop:HP}.  
Therefore,
 Proposition \ref{prop:PPSh}
 follows from Lemma \ref{lem:ShPP}.  
\end{proof}

\begin{proposition}
\label{prop:PPtensor}
If $(G,G')$ satisfies {\rm{(PP)}}, 
 then $\invHom{G'}{\pi^{\infty} \otimes \tau^{\infty}}{{\mathbb{C}}}$
 is finite-dimensional 
 for any pair $(\pi^{\infty}, \tau^{\infty})$
 of admissible smooth representations 
 of $G$ and $G'$.  
\end{proposition}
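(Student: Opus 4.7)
The plan is to reduce the statement to Proposition \ref{prop:HP} via Frobenius reciprocity applied to the outer tensor product representation of $G \times G'$ on the real spherical homogeneous space $(G \times G')/\operatorname{diag} G'$.

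First, I would observe that $\pi^{\infty} \otimes \tau^{\infty}$, viewed as a representation of $G'$, is obtained by restricting the outer tensor product $\pi^{\infty} \boxtimes \tau^{\infty}$ of $G \times G'$ to the diagonally embedded subgroup $\operatorname{diag} G' \subset G \times G'$. Since $\pi^{\infty}$ and $\tau^{\infty}$ are admissible smooth representations of the reductive groups $G$ and $G'$, the outer tensor product $\pi^{\infty} \boxtimes \tau^{\infty}$ is an admissible smooth representation of the real reductive Lie group $G \times G'$; this is a standard consequence of the fact that the $(K \times K')$-multiplicities are products of $K$- and $K'$-multiplicities, and that the infinitesimal character is the pair $(\chi_\lambda, \chi_\nu)$.

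Next, apply Frobenius reciprocity (Proposition \ref{prop:Frob}) to the triple $(G \times G', \operatorname{diag} G', \pi^{\infty} \boxtimes \tau^{\infty})$ to obtain a canonical isomorphism
\[
\invHom{G'}{\pi^{\infty} \otimes \tau^{\infty}}{{\mathbb{C}}}
\simeq
\invHom{G \times G'}{\pi^{\infty} \boxtimes \tau^{\infty}}{C^{\infty}((G \times G')/\operatorname{diag} G')}.
\]
By the hypothesis (PP) and the equivalence (PP) $\Leftrightarrow$ (PP1), the homogeneous space $(G \times G')/\operatorname{diag} G'$ is real spherical as a $(G \times G')$-space. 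Therefore Proposition \ref{prop:HP} applies to $G \times G'$ acting on this space, and the right-hand side is finite-dimensional. This gives the desired conclusion.

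There is no substantive obstacle; the only point requiring mild care is the verification that $\pi^{\infty} \boxtimes \tau^{\infty}$ qualifies as an admissible smooth representation of $G \times G'$ so that Proposition \ref{prop:HP} may be invoked, and that the Frobenius reciprocity in Proposition \ref{prop:Frob} is applied to the correct inclusion $\operatorname{diag} G' \hookrightarrow G \times G'$ rather than to $G' \hookrightarrow G$. Both are routine. Note that, compared with the proof of Proposition \ref{prop:PPSh}, one does not need to pass through the $(K \times K')$-invariant subspace via Lemma \ref{lem:ShPP}; Proposition \ref{prop:HP} applied at the level of the full smooth representations $\pi^{\infty} \boxtimes \tau^{\infty}$ is already sufficient.
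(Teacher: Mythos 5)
Your proposal is correct and takes essentially the same route as the paper: Frobenius reciprocity (Proposition \ref{prop:Frob}) applied to $\pi^{\infty}\boxtimes\tau^{\infty}$ and $\operatorname{diag}G'\subset G\times G'$, followed by Proposition \ref{prop:HP} for the real spherical space $(G\times G')/\operatorname{diag}G'$. The extra remarks on admissibility of the outer tensor product and on which inclusion Frobenius reciprocity is applied to are sensible sanity checks but do not change the argument.
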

\begin{proof}
Since $(G \times G')/\operatorname{diag}G'$ is real spherical,
\[
   \dim_{\mathbb{C}}
   \invHom{G \times G'}
          {\pi^{\infty} \boxtimes \tau^{\infty}}
          {C^{\infty}(G \times G'/\operatorname{diag}G')}<\infty
\]
by Proposition \ref{prop:HP}.  
Therefore Proposition \ref{prop:PPtensor}
 follows from the Frobenius reciprocity
 (Proposition \ref{prop:Frob}).  
\end{proof}

\section{Construction of intertwining operators}
\label{sec:Poisson}

In this section
 we give lower bounds 
 of the dimension of the space of symmetry breaking operators
 for the restriction of admissible Hilbert representations.

\subsection{A generalization of the Poisson integral transform}
\label{subsec:Poisson}
We fix some general notation.  
Let $H$ be a closed subgroup of $G$.  
Given a finite-dimensional representation $\tau$
 of $H$
 on a vector space $W_{\tau}$, 
 we denote by ${\mathcal{W}}_{\tau}$
 the $G$-equivariant vector bundle
 $G \times_H W_{\tau}$
 over the homogeneous space $G/H$.  
Then we have a representation
 of $G$ naturally 
 on the space of sections
\[
{\mathcal{F}}(G/H;\tau)
\equiv
{\mathcal{F}}(G/H;{\mathcal{W}}_{\tau})
\simeq
\{f \in {\mathcal{F}}(G) \otimes W:
  f (\cdot h) = \tau(h)^{-1}f(\cdot)
\,\,\text{ for }\,\,h \in H\},
\]
where ${\mathcal{F}}={\mathcal{A}}$, 
 $C^{\infty}$, ${\mathcal{D}}'$, 
 or ${\mathcal{B}}$
 denote the sheaves
 of analytic functions,
 smooth functions,
 distributions,
 or hyperfunctions,
respectively.

\begin{remark}
{\rm{
We shall regard distributions
 as generalized functions
 {\`a} la Gelfand
 (or a special case of hyperfunctions {\`a} la Sato)
 rather than continuous linear forms
 on $C_c^{\infty}(G/H, {\mathcal{W}}_{\tau})$.  
}}
\end{remark}
We define a one-dimensional representation
 of $H$ by 
\[
  \chi_{G/H}:H \to {\mathbb{R}}^{\times}, 
  \quad
  h \mapsto |\det(\operatorname{Ad}_{\#}(h)
             : {\mathfrak {g}}/{\mathfrak {h}}
              \to {\mathfrak {g}}/{\mathfrak {h}})|^{-1}, 
\]
where $\operatorname{Ad}_{\#}(h)$ is the quotient representation
 of the adjoint representation 
 $\operatorname{Ad}(h) \in GL_{{\mathbb{R}}}({\mathfrak {g}})$.  
The bundle of volume densities
 of $X=G/H$
 is given as a $G$-homogeneous line bundle
 $\Omega_X \simeq G \times_H \chi_{G/H}$.  
Then the dualizing bundle
 of ${\mathcal{W}}_{\tau}$ is given, 
 as a homogeneous vector bundle,
 by 
\[
   {\mathcal{W}}_{\tau}^{\ast}
  :=
   (G \times_H W_{\tau}^{\vee})\otimes \Omega_X
  \simeq
  G \times_H \tau^{\ast}, 
\]
 where $(\tau^{\vee}, W_{\tau}^{\vee})$
 denotes the contragredient representation 
 of $(\tau, W_{\tau})$, 
 and $\tau^{\ast}$ is a complex representation 
 of $H$ given by 
\begin{equation}
\label{eqn:dual}
   \tau^{\ast}
   :=
   \tau^{\vee} \otimes \chi_{G/H}.  
\end{equation}

Suppose now
 that $Q$ is a parabolic subgroup
 of a real reductive Lie group $G$, 
and $Q=LN$ a Levi decomposition.  
By an abuse of notation
 we write ${\mathbb{C}}_{2 \rho}$
 for $\chi_{G/Q}$.  
Then ${\mathbb{C}}_{2 \rho}$ is trivial
 on the nilpotent subgroup $N$, 
 and the restriction of ${\mathbb{C}}_{2 \rho}$
 to the Levi part $L$
 coincides with the one-dimensional representation
 defined by
\[
  L \to {\mathbb{R}}^{\times}
  \quad
  l \mapsto |\det(\operatorname{Ad}(l)
             : {\mathfrak {n}} \to {\mathfrak {n}})|.  
\]

In view of the isomorphism
 $K/(Q \cap K) \overset \sim \to G/Q$, 
 ${\mathcal{W}}_{\tau}$ may be regarded
 as a $K$-equivariant vector bundle
 over $K/(Q \cap K)$.  
Then there exist a $K$-invariant Hermitian vector bundle
 structure on ${\mathcal{W}}_{\tau}$
 and a $K$-invariant Radon measure
 on $K/(Q \cap K)$, 
 and we can define 
 a Hilbert representation of $G$ 
 on the Hilbert space $L^2(G/Q;\tau)$
 of square integrable sections of ${\mathcal{W}}_{\tau}$.  
The underlying $({\mathfrak {g}}, K)$-module
 of ${\mathcal{F}}(G/Q;\tau)$
 does not depend on the choice
 of ${\mathcal{F}}={\mathcal{A}}, 
 C^{\infty}$, 
 ${\mathcal{D}}'$, 
 ${\mathcal{B}}$, 
 or $L^2$, 
 and will be denoted 
 by $E(G/Q;\tau)$.

We denote by $\widehat G_f$
 and $\widehat L_f$
 the sets of equivalence classes 
 of finite-dimensional irreducible representations
 over ${\mathbb{C}}$
 of the groups $G$ and $L$, 
respectively.  
Then there is an injective map
\[
\widehat G_f \hookrightarrow \widehat L_f, 
\qquad
 \sigma \mapsto \lambda(\sigma)
\]
 such that 
$\sigma$ is the unique quotient of 
 the $({\mathfrak {g}}, K)$-module 
 $E(G/Q;\lambda(\sigma))$.  
We note that $\lambda({\bf{1}}) = {\mathbb{C}}_{2\rho}$.

Here is a Hilbert space
 analog of \cite[Theorem 3.1]{xtoshitoshima}
 which was formulated in the category 
 of $({\mathfrak {g}}, K)$-modules 
 (and was proved in the case
 where $Q$ is a minimal parabolic subgroup of $G$).  
\begin{proposition}
\label{prop:N}
Let $Q$ be a parabolic subgroup of $G$, 
 and $H$ a closed subgroup of $G$.  
Suppose that
 there are $m$ disjoint $H$-invariant open subsets
 in the real generalized flag variety $G/Q$.  
Then 
\[
 \dim \invHom {G}{L^2(G/Q;\lambda(\sigma))}{C(G/H;\tau)}
  \ge
  m \dim \invHom {H} {\sigma|_H}{\tau}, 
\]
for any finite-dimensional representations 
 $\sigma$ and $\tau$ of $G$ and $H$, 
 respectively.  
In particular,
 we have
\[
  \dim \invHom G {L^2(G/Q, \Omega_{G/Q})}{C(G/H)} \ge m.  
\]
\end{proposition}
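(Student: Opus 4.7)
The plan is to produce, for each pair $(i, p)$ with $1 \le i \le m$ and $p$ ranging over a basis $p_1, \ldots, p_k$ of $\invHom{H}{\sigma|_H}{\tau}$, a continuous $G$-homomorphism $T_{i,p} : L^2(G/Q;\lambda(\sigma)) \to C(G/H;\tau)$, and to verify that these $mk$ maps are linearly independent. By the bundle-valued analogue of the Frobenius reciprocity of Proposition \ref{prop:Frob}, constructing $T_{i,p}$ is equivalent to producing a continuous $H$-equivariant linear map $\Phi_{i,p} : L^2(G/Q;\lambda(\sigma)) \to W_\tau$, with the correspondence given by $T_{i,p}(f)(g) = \Phi_{i,p}(g^{-1}f)$.

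The formula I propose is $\Phi_{i,p}(f) := p\bigl(q_\sigma(\mathbf{1}_{U_i}\, f)\bigr)$, where $q_\sigma : L^2(G/Q;\lambda(\sigma)) \to W_\sigma$ is a continuous $G$-equivariant extension of the quotient map $E(G/Q;\lambda(\sigma)) \twoheadrightarrow \sigma$ that defines $\lambda(\sigma)$. To obtain this extension I would dualize: $q_\sigma$ is represented by pairing against the image of a $G$-equivariant embedding $W_{\sigma^\vee} \hookrightarrow \mathcal{D}'(G/Q;\lambda(\sigma)^{\ast})$, whose image consists of $G$-finite vectors and hence, by elliptic regularity applied to the Casimir, of real analytic sections. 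Pairing such sections with $L^2$ sections on the compact manifold $G/Q$ is continuous, giving the desired extension. The $H$-equivariance of $\Phi_{i,p}$ follows because $H$-invariance of $U_i$ makes truncation by $\mathbf{1}_{U_i}$ commute with the $H$-action, and both $q_\sigma$ and $p$ are $H$-equivariant.

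For linear independence, suppose $\sum_{i,j} c_{i,j}\, \Phi_{i,p_j} = 0$. Testing on $f$ supported in a fixed $U_{i_0}$ annihilates all terms with $i \ne i_0$ by disjointness of the $U_i$'s, leaving $\sum_j c_{i_0,j}\, p_j(q_\sigma(f)) = 0$ for every such $f$. The real analytic sections realizing $q_\sigma$ remain linearly independent after restriction to any nonempty open subset (a nonzero real analytic function cannot vanish on an open set), so $q_\sigma$ restricted to $L^2$ sections supported in $U_{i_0}$ is surjective onto $W_\sigma$. Consequently $\sum_j c_{i_0,j}\, p_j = 0$ on all of $W_\sigma$, forcing $c_{i_0,j} = 0$ for every $j$. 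The technical heart of the argument is the $L^2$-continuity of $q_\sigma$; the same real analyticity that secures this continuity also delivers the surjectivity on localized sections needed for linear independence, so I expect the two issues to stand or fall together.
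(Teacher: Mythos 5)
Your proposal is correct and follows essentially the same route as the paper: both arguments embed $\sigma^{\vee}$ into $\mathcal{A}(G/Q;\lambda(\sigma)^{\ast})$ by dualizing the defining quotient map, truncate by the characteristic functions of the disjoint $H$-invariant open sets $U_i$, and convert the resulting $H$-invariant $L^2$-sections into symmetry breaking operators via the pairing together with Frobenius reciprocity (the paper packages the pairing step as Proposition~\ref{prop:Poisson-1}, while you carry it out directly). Your use of elliptic regularity to justify real analyticity, the $L^2$-continuity of $q_\sigma$, and the linear independence via the identity theorem makes explicit what the paper leaves implicit, but the mathematical content is the same.
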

A key of the proof is the construction
 of integral intertwining operators
 formulated as follows:
\begin{proposition}
\label{prop:Poisson-1}
Let $\tau$ and $\zeta$ be finite-dimensional representations
 of $H$ and $Q$, 
 respectively.  
We set $\zeta^{\ast} = \zeta^{\vee} \otimes {\mathbb{C}}_{2\rho}$.  
Let $({\mathcal{F}}, {\mathcal{F}}')$
 be one of the pairs
\[
 ({\mathcal{A}},{\mathcal{B}}), \,\,
 (C^{\infty},{\mathcal{D}}'),\,\,
 (L^2,L^2), \,\,
 ({\mathcal{D}}',C^{\infty}),\,\,
  \text{ or }\,\,
 ({\mathcal{B}},{\mathcal{A}}).  
\]
Then there is a canonical injective map
\[
\Phi:
({\mathcal{F}}'(G/Q;\zeta^{\ast}) \otimes \tau)^H
\hookrightarrow
\invHom{G}{{\mathcal{F}}(G/Q;\zeta)}{C(G/H;\tau)}.  
\]
\end{proposition}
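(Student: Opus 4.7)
The plan is to construct $\Phi$ by a Poisson-type formula built from the natural $G$-invariant duality pairing on the compact flag variety $G/Q$. Since $\zeta^{\ast}=\zeta^{\vee}\otimes\mathbb{C}_{2\rho}$ is by definition the bundle dual to $\mathcal{W}_\zeta$ (the factor $\mathbb{C}_{2\rho}=\chi_{G/Q}$ absorbing the density bundle $\Omega_{G/Q}$), integration on $G/Q$ of the pointwise pairing between a section of $\mathcal{W}_\zeta$ and a section of $\mathcal{W}_{\zeta^{\ast}}$ produces a non-degenerate $G$-invariant bilinear form
\[
\langle\,\cdot\,,\,\cdot\,\rangle:\mathcal{F}(G/Q;\zeta)\times\mathcal{F}'(G/Q;\zeta^{\ast})\to\mathbb{C}
\]
for each of the five listed dual pairs $(\mathcal{F},\mathcal{F}')$. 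Extending linearly in the $W_\tau$-factor, any $\varphi\in\mathcal{F}'(G/Q;\zeta^{\ast})\otimes W_\tau$ defines a continuous $W_\tau$-valued functional $\langle\,\cdot\,,\varphi\rangle$ on $\mathcal{F}(G/Q;\zeta)$, and I set
\[
(\Phi(\varphi)f)(g):=\langle L_{g^{-1}}f,\,\varphi\rangle\qquad(g\in G,\ f\in\mathcal{F}(G/Q;\zeta)).
\]

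Next I would verify that $\Phi(\varphi)$ belongs to $\invHom{G}{\mathcal{F}(G/Q;\zeta)}{C(G/H;\tau)}$. Continuity of $g\mapsto(\Phi(\varphi)f)(g)$ follows from joint continuity of the left translation action of $G$ on $\mathcal{F}(G/Q;\zeta)$ composed with the continuous functional $\langle\,\cdot\,,\varphi\rangle$. The right $H$-equivariance uses the $G$-invariance of the pairing in the form $\langle L_{h^{-1}}x,y\rangle=\langle x,L_h y\rangle$ together with the observation that the diagonal $H$-invariance of $\varphi$ is equivalent to $(L_h\otimes\operatorname{id})\varphi=(\operatorname{id}\otimes\tau(h)^{-1})\varphi$ in $\mathcal{F}'(G/Q;\zeta^{\ast})\otimes W_\tau$; combining these yields $(\Phi(\varphi)f)(gh)=\tau(h)^{-1}(\Phi(\varphi)f)(g)$, so $\Phi(\varphi)f\in C(G/H;\tau)$. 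Left $G$-equivariance of $f\mapsto\Phi(\varphi)f$ is immediate from $L_{g^{-1}}L_{g_0}f=L_{(g_0^{-1}g)^{-1}}f$. Finally, injectivity of $\Phi$ reduces, by evaluating at $g=e$, to the statement that $\langle f,\varphi\rangle=0$ for all $f\in\mathcal{F}(G/Q;\zeta)$ forces $\varphi=0$, which is exactly non-degeneracy of the chosen dual pair.

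The main subtlety is to set up the duality pairing uniformly in all five cases. For $(C^\infty,\mathcal{D}')$, $(\mathcal{D}',C^\infty)$, and $(L^2,L^2)$ the construction is the standard Schwartz/$L^2$ pairing on the compact manifold $G/Q$. For the analytic/hyperfunction pairs $(\mathcal{A},\mathcal{B})$ and $(\mathcal{B},\mathcal{A})$ one invokes Sato's theory on the compact real-analytic manifold $G/Q$, using that hyperfunction sections canonically pair with real-analytic sections of the dual bundle; non-degeneracy in this case is the flabbiness/duality of the hyperfunction sheaf. In each case the orbit map $g\mapsto L_{g^{-1}}f$ is continuous in the native topology of $\mathcal{F}(G/Q;\zeta)$, so the composition with $\langle\,\cdot\,,\varphi\rangle$ is genuinely continuous in $g$, and the construction above delivers the injective map asserted in Proposition~\ref{prop:Poisson-1}.
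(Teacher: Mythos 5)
Your construction is the same as the paper's: both define the map by pairing $\pi(g)^{-1}f$ against the $H$-invariant vector $\varphi$ via the $G$-invariant non-degenerate duality between $\mathcal{F}(G/Q;\zeta)$ and $\mathcal{F}'(G/Q;\zeta^\ast)$, then read off the $H$-equivariance, $G$-intertwining, and injectivity from the corresponding properties of that pairing. The paper packages this by first building the intermediate map $\Psi$ into $\invHom{G}{\mathcal{F}(G/Q;\zeta)}{C(G)}$ and then tensoring with $\tau$ and taking $H$-invariants, but the substance is identical.
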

\begin{proof}
The proof is essentially the same
 with that of \cite[Lemma 3.2]{xtoshitoshima}
 which treated the case
 where 
$
   ({\mathcal{F}}, {\mathcal{F}}')=({\mathcal{A}}, {\mathcal{B}})
$
and where $Q$ is a minimal parabolic subgroup of $G$.  
For the sake of completeness,
 we repeat the proof 
 with appropriate modifications.

The natural $G$-invariant non-degenerate bilinear form
\[
\langle\, , \, \rangle :
 {\mathcal{F}}(G/Q;\zeta) \times {\mathcal{F}}'(G/Q;\zeta^{\ast})
  \to 
  {\mathbb{C}}
\]
 induces an injective $G$-homomorphism
\[
\Psi: {\mathcal{F}}'(G/Q;\zeta^{\ast}) \hookrightarrow
      \invHom {G}{{\mathcal{F}}(G/Q;\zeta)}{C(G)}
\]
by 
\[
\Psi(\chi)(u)(g):=\langle \pi(g)^{-1}u, \chi\rangle
\quad
\text{for }\,\,
 \chi\in {\mathcal{F}}'(G/Q;\zeta^{\ast})
\,\,{\text{ and }}\,\,
 u \in {\mathcal{F}}(G/Q;\zeta), 
\]
where $\pi$ is the regular representation
 of $G$ on ${\mathcal{F}}(G/Q;\zeta)$.

Taking the tensor product
 with the finite-dimensional representation $\tau$
 followed by collecting $H$-invariant elements,
 we get the linear map $\Phi$
 in Proposition \ref{prop:Poisson-1}.  
\end{proof}

\begin{example}
[Poisson integral transform]
\label{ex:Poisson}
{\rm{
We apply Proposition \ref{prop:Poisson-1}
 in the following setting:
\begin{align*}
&({\mathcal{F}},{\mathcal{F}}')=({\mathcal{B}},{\mathcal{A}}),
\\
&H=K, 
\\
&Q: \text{a minimal parabolic subgroup 
 of $G$}, 
\\
&\tau:\text{ the trivial one-dimensional representation 
 ${\bf{1}}$ of $K$}, 
\\
& \zeta:\text{ a one-dimensional representation 
 of $Q$ 
 such that $\zeta|_{Q \cap K}$ is trivial.  }
\end{align*}
Then ${\mathcal{A}}(G/Q;\zeta^{\ast})$
 is identified with ${\mathcal{A}}(K/(Q \cap K))$
 as a $K$-module,
 and the constant function 
 ${\bf{1}}_K$ on $K/(Q \cap K)$
 gives rise to an element
 of 
$
   ({\mathcal{A}}(G/Q; \zeta^{\ast}) \otimes \tau)^K
$.  
Then ${\mathcal{P}}_{\mu}:=
 \Phi({\bf{1}}_K)$
 in Proposition \ref{prop:Poisson-1}
 coincides with the Poisson integral transform
 for the Riemannian symmetric space
 $G/K$
 (\cite[Chapter 2]{Helgason}):
\[
   {\mathcal{P}}_{\mu}:
   {\mathcal{B}}(G/Q;\zeta) \to C(G/K), 
   \quad
   f \mapsto ({\mathcal{P}}_{\mu}f)(g)= \int_K f(gk) d k.  
\]
See Proposition \ref{prop:Poisson}
 for the preceding results 
 on the image of ${\mathcal{P}}_{\mu}$.  
}}
\end{example}

\begin{proof}
[Proof of Proposition \ref{prop:N}]
The proof is parallel to that of \cite[Theorem 3.17]{xtoshitoshima}.

Let $U_i$ ($i=1,2,\cdots,m$)
 be disjoint $H$-invariant open subsets in $G/Q$.  
We define
\[
\chi_i(g):=
\begin{cases}
1
&\text{if }\,\, g \in U_i, 
\\
0
&\text{if }\,\, g \not\in U_i.  
\end{cases}
\]
Then $\chi_i \in L^2(G/Q) \simeq L^2(K/Q \cap K)$ $(i=1,\cdots,m)$, 
 and they are $H$-invariant 
 and linearly independent.  

We take linearly independent elements $u_1$, $\cdots$, $u_n$
 in $\invHom {H}{\sigma|_H}{\tau}$.  
Taking the dual 
 of the surjective
 $({\mathfrak {g}}, K)$-homomorphism
 $E(G/Q;\lambda(\sigma)) \twoheadrightarrow \sigma$, 
 we have an injective $({\mathfrak{g}},K)$-homomorphism
$\sigma^{\vee}
 \hookrightarrow
 E(G/Q;\lambda(\sigma)^{\ast})
 \subset
 {\mathcal{A}}(G/Q;\lambda(\sigma)^{\ast})$.  
Hence we may regard 
$u_j \in \invHom {H}{\sigma|_H}{\tau} \simeq (\sigma^{\vee} \otimes \tau)^H$
 as $H$-invariant elements
 of ${\mathcal{A}}(G/Q;\lambda(\sigma)^{\ast}) \otimes \tau$.  
Then $\chi_i u_j\in (L^2(G/Q;\lambda(\sigma)^{\ast}) \otimes \tau)^H$
 ($1 \le i \le m$, $1 \le j \le n$)
 are linearly independent.  

Proposition \ref{prop:N} now follows from 
Proposition \ref{prop:Poisson-1}
 with $({\mathcal{F}}, {\mathcal{F}}')=(L^2,L^2)$.  
\end{proof}

\begin{proposition}
\label{prop:L}
Let $Q$ and $Q'$ be parabolic subgroups
 of $G$ and $G'$.  
Suppose that there are $m$ disjoint
 $Q'$-invariant open sets
 in $G/Q$.  
Then 
\[
 \dim \invHom{G'}{L^2(G/Q, \Omega_{G/Q})}{L^2(G'/Q')} \ge m.  
\]
\end{proposition}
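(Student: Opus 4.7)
The strategy parallels that of Proposition \ref{prop:N}: I would construct $m$ linearly independent $G'$-intertwining operators by pairing sections of $\Omega_{G/Q}$ against the characteristic functions of the given $Q'$-invariant open subsets.

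First, I would apply Proposition \ref{prop:N} itself with the closed subgroup of $G$ taken to be $H = Q'$, viewed in $G$ via the chain $Q' \subset G' \subset G$, and with $\sigma$ and $\tau$ both trivial. The hypothesis is exactly what is assumed here, and the ``in particular'' conclusion gives
\[
 \dim \invHom{G}{L^2(G/Q,\Omega_{G/Q})}{C(G/Q')} \ge m,
\]
producing $m$ linearly independent $G$-intertwiners $S_1,\dots,S_m$. Tracing the proof, each has the explicit form
\[
 S_i(f)(g) = \langle \pi(g)^{-1}f,\mathbf{1}_{U_i}\rangle,
\]
obtained from Proposition \ref{prop:Poisson-1} applied with $(\mathcal{F},\mathcal{F}')=(L^2,L^2)$, $\zeta = \chi_{G/Q}$, $\tau = \mathbf{1}$; here $\pi$ denotes the left regular representation, $\mathbf{1}_{U_i}$ is the characteristic function of $U_i$, and the bracket is the natural pairing between $L^2(G/Q,\Omega_{G/Q})$ and $L^2(G/Q)$.

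Next, I would exploit the closed embedding $\iota : G'/Q' \hookrightarrow G/Q'$ induced by $G' \subset G$, which yields a $G'$-equivariant restriction map $\iota^{*}: C(G/Q') \to C(G'/Q')$. Since $G'/Q'$ is compact (via the identification $G'/Q' \cong K'/M'$ with $M' := K' \cap Q'$), there is a continuous $G'$-equivariant inclusion $C(G'/Q') \hookrightarrow L^2(G'/Q')$, and the compositions
\[
 T_i := \iota^{*} \circ S_i : L^2(G/Q,\Omega_{G/Q}) \to L^2(G'/Q')
\]
are $m$ continuous $G'$-intertwiners, yielding the desired lower bound provided they remain linearly independent.

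The remaining step, and the main obstacle, is to verify the linear independence of $T_1,\dots,T_m$ after these two restrictions. From the explicit formula, a relation $\sum_i c_i T_i = 0$ forces $\langle \pi(g')^{-1}f, \sum_i c_i \mathbf{1}_{U_i}\rangle = 0$ for every $f \in L^2(G/Q,\Omega_{G/Q})$ and every $g' \in G'$. Specializing to $g' = e$ and using that $f$ is arbitrary, together with the non-degeneracy of the pairing, forces $\sum_i c_i \mathbf{1}_{U_i} = 0$ in $L^2(G/Q)$; the pairwise disjointness of the $U_i$ then forces every $c_i = 0$. A secondary technical point, namely the compatibility of the $G'$-actions on $C(G'/Q')$ and $L^2(G'/Q')$, is a matter of convention which I would handle by fixing a suitable quasi-invariant measure on $G'/Q'$ so that $G'$ acts continuously on both spaces and the inclusion is equivariant.
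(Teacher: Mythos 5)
Your proof is correct, but it takes a genuinely different route from the paper's. The paper applies Proposition~\ref{prop:N} once, to the \emph{product} group $G\times G'$ with $H=\operatorname{diag}G'$, $\sigma=\tau=\mathbf{1}$, and parabolic $Q\times Q'$: the $m$ disjoint $Q'$-invariant open sets $U_i\subset G/Q$ are first promoted to $m$ disjoint $\operatorname{diag}G'$-invariant open sets in $(G\times G')/(Q\times Q')\cong G/Q\times G'/Q'$ (by saturating $U_i\times\{eQ'\}$ under the diagonal action), yielding $\dim\invHom{G\times G'}{\pi\boxtimes\tau}{C((G\times G')/\operatorname{diag}G')}\ge m$; the conclusion then follows at once from Frobenius reciprocity (Proposition~\ref{prop:Frob}) and Lemma~\ref{lem:rest}~(2), with no need to re-examine linear independence since these are dimension-preserving identifications. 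You instead apply Proposition~\ref{prop:N} to $G$ itself with $H=Q'$ (which uses the hypothesis verbatim, with no saturation step), and then compose with the $G'$-equivariant restriction $C(G/Q')\to C(G'/Q')$ and the continuous inclusion $C(G'/Q')\hookrightarrow L^2(G'/Q')$; the price is that composition need not preserve linear independence, so you must reargue it, which you do cleanly by evaluating at $eQ'$ and invoking non-degeneracy of the pairing together with disjointness of the $U_i$. Both arguments are sound; the paper's is shorter because the abstract reciprocity machinery does the bookkeeping, while yours is more explicit and arguably more transparent about where the lower bound actually comes from. One small point worth stating explicitly in your write-up is that each $U_i$ is nonempty, so that $\mathbf{1}_{U_1},\dots,\mathbf{1}_{U_m}$ are indeed linearly independent in $L^2(G/Q)$.
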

\begin{proof}
We apply Proposition \ref{prop:N}
 to $(G \times G'$, 
 $\operatorname{diag} G'$, 
 ${\bf{1}}$, ${\bf{1}}$, 
 $Q \times Q'$)
 for $(G$, $H$, $\sigma$, $\tau$, $Q$).  
Then we have 
\[
    \dim \invHom{G\times G'}{\pi \boxtimes \tau}{C(G \times G'/\operatorname{diag}G')} \ge m, 
\]
where $\pi$ is the Hilbert representation of $G$
 on $L^2(G/Q, \Omega_{G/Q})$
 and $\tau$ is that of $G'$
 on $L^2(G'/Q', \Omega_{G'/Q'})$.

By Proposition \ref{prop:Frob}, 
 we have
\[
    \dim \invHom{G'}{(\pi \boxtimes \tau)|_{\operatorname{diag}G'}}
                    {{\mathbb{C}}} \ge m.   
\]
By Lemma \ref{lem:rest} (2), 
 we get the required lower bound.  
\end{proof}

\subsection{Realization of small representations}
\label{subsec:Qseries}
We end this section
 with a refinement of \cite[Theorem A (2)]{xtoshitoshima}
 which was formulated originally 
 in the category
 of $({\mathfrak {g}}, K)$-modules
 and was proved
 when $Q$ is a minimal parabolic subgroup of $G$.

\begin{definition}
\label{def:Pseries}
{\rm{
Let $Q$ be a parabolic subgroup 
 of a real reductive Lie group $G$.  
Let $\pi$ be an irreducible admissible representation
 of $G$, 
and $\pi_K$ the underlying $({\mathfrak {g}}, K)$-module.  
We say $\pi$ (or $\pi_K$)
 belongs to {\it{$Q$-series}}
 if $\pi_K$ occurs 
 as a subquotient of the induced $({\mathfrak {g}}, K)$-module
 $E(G/Q;\tau)$
 for some finite-dimensional representation $\tau$ of $Q$.  
}}
\end{definition}
By Harish-Chandra's subquotient theorem \cite{HarishChandra54}, 
 all irreducible admissible representations
 of $G$
 belong to $P$-series
 where $P$ is a minimal parabolic subgroup of $G$.  
Loosely speaking,
 the larger a parabolic subgroup $Q$ is, 
the \lq\lq{smaller}\rq\rq\ a representation
 belonging to $Q$-series
 becomes,
 as the following lemma indicates:
\begin{lemma}
\label{lem:GK}
If $\pi_K$ belongs to $Q$-series,
 then its Gelfand--Kirillov dimension, 
 to be denoted by $\operatorname{DIM}(\pi_K)$, 
 satisfies
\[
   \operatorname{DIM}(\pi_K) \le \dim G/Q.  
\]
\end{lemma}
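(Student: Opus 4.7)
The plan is to reduce the lemma to bounding the Gelfand--Kirillov dimension of the induced module $E(G/Q;\tau)$ itself, and then invoke monotonicity of $\operatorname{DIM}$ under subquotients. For any short exact sequence $0 \to M_1 \to M_2 \to M_3 \to 0$ of finitely generated $U({\mathfrak g}_{\mathbb{C}})$-modules one has the standard identity $\operatorname{DIM}(M_2) = \max\{\operatorname{DIM}(M_1), \operatorname{DIM}(M_3)\}$; applied to $\pi_K$ as a subquotient of $E(G/Q;\tau)$ this yields $\operatorname{DIM}(\pi_K) \le \operatorname{DIM}(E(G/Q;\tau))$. So the lemma reduces to
\[
  \operatorname{DIM}(E(G/Q;\tau)) \le \dim_{\mathbb{R}} G/Q.
\]

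I would prove this bound by the compact picture. Every parabolic subgroup $Q$ of the real reductive group $G$ contains a minimal parabolic, so $G = KQ$; restriction of sections then gives a $K$-equivariant isomorphism $E(G/Q;\tau) \simeq E(K/(K \cap Q); \tau|_{K \cap Q})$, and in particular $E(G/Q;\tau)$ is generated as a $U({\mathfrak g}_{\mathbb{C}})$-module by a single finite-dimensional $K$-stable subspace $V_0$ (for example any sum of lowest $K$-types). Fix an opposite nilradical $\overline{\mathfrak n}_{\mathbb{C}}$ so that ${\mathfrak g}_{\mathbb{C}} = {\mathfrak q}_{\mathbb{C}} \oplus \overline{\mathfrak n}_{\mathbb{C}}$. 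By the PBW theorem,
\[
  U_n({\mathfrak g}_{\mathbb{C}}) V_0 \subset \sum_{i+j \le n} U_i(\overline{\mathfrak n}_{\mathbb{C}})\, U_j({\mathfrak q}_{\mathbb{C}}) V_0,
\]
and since ${\mathfrak q}_{\mathbb{C}}$ acts on each fixed $K$-type with uniformly bounded length, the leading growth of the filtration $\{U_n({\mathfrak g}_{\mathbb{C}}) V_0\}$ is governed by $\dim U_n(\overline{\mathfrak n}_{\mathbb{C}})$, which is polynomial of degree $\dim_{\mathbb{C}} \overline{\mathfrak n}_{\mathbb{C}} = \dim_{\mathbb{R}} G/Q$. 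This gives the required estimate.

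The main obstacle is making rigorous the claim that the ${\mathfrak q}_{\mathbb{C}}$-factor in the PBW expansion contributes only lower-order growth, since naively $\dim {\mathfrak q}_{\mathbb{C}} > 0$. A cleaner route is via associated varieties in the sense of Vogan, which identifies $\operatorname{DIM}$ with the complex dimension of a $K_{\mathbb{C}}$-invariant closed subvariety of $({\mathfrak g}_{\mathbb{C}}/{\mathfrak k}_{\mathbb{C}})^*$ contained in the image of the annihilator of ${\mathfrak q}_{\mathbb{C}}$. Since this image has complex dimension at most $\dim_{\mathbb{C}} G_{\mathbb{C}}/Q_{\mathbb{C}} = \dim_{\mathbb{R}} G/Q$, the same bound follows without an explicit growth calculation.
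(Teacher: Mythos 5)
The paper states this lemma without proof (it is treated as a standard fact about parabolically induced modules), so the question is whether your argument stands on its own. Your reduction to bounding $\operatorname{DIM}(E(G/Q;\tau))$ via monotonicity of $\operatorname{DIM}$ under subquotients is fine. But the PBW estimate has a genuine gap, and the associated-variety alternative is sketched too loosely to replace it.

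The crux of the PBW argument is the assertion that ``$\mathfrak{q}_{\mathbb{C}}$ acts on each fixed $K$-type with uniformly bounded length,'' which you use to argue that the factor $U_j(\mathfrak{q}_{\mathbb{C}})V_0$ contributes only bounded growth. This is false. The parabolic $\mathfrak{q}_{\mathbb{C}}$ is not contained in $\mathfrak{k}_{\mathbb{C}}$ (it contains $\mathfrak{a}_{\mathbb{C}}$ and the nilradical $\mathfrak{n}_{\mathbb{C}}$), so its elements move $K$-types, and iterating them produces arbitrarily many $K$-types. Already for a generic spherical principal series of $SL(2,\mathbb{R})$, with $Q$ the minimal parabolic and $V_0$ the spherical vector, $U_j(\mathfrak{q}_{\mathbb{C}})V_0$ grows linearly in $j$ --- in fact $U(\mathfrak{q}_{\mathbb{C}})V_0$ is the whole module. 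Plugging such growth into $U_n(\mathfrak{g}_{\mathbb{C}})V_0 \subset \sum_{i+j\le n} U_i(\overline{\mathfrak{n}}_{\mathbb{C}})\,U_j(\mathfrak{q}_{\mathbb{C}})V_0$ yields a degree bound of roughly $\dim\overline{\mathfrak{n}}_{\mathbb{C}} + \operatorname{DIM}(E(G/Q;\tau))$, which is circular and, taken at face value, overshoots the target by a factor of two.

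The associated-variety route points in a sensible direction, but as written it is not a proof. The phrase ``contained in the image of the annihilator of $\mathfrak{q}_{\mathbb{C}}$'' has no precise meaning here: $\mathfrak{q}_{\mathbb{C}}^{\perp}$ is a linear subspace of $\mathfrak{g}_{\mathbb{C}}^{*}$ of dimension $\dim\overline{\mathfrak{n}}_{\mathbb{C}}$, while $\operatorname{AV}(E(G/Q;\tau))$ is a $K_{\mathbb{C}}$-invariant cone in $(\mathfrak{g}_{\mathbb{C}}/\mathfrak{k}_{\mathbb{C}})^{*}\simeq\mathfrak{p}_{\mathbb{C}}$, and the claimed containment is asserted without justification. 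A correct version of this strategy would show that $\operatorname{Ann}_{U(\mathfrak{g}_{\mathbb{C}})}(E(G/Q;\tau))$ has associated variety contained in the closure of the Richardson orbit attached to $\mathfrak{q}_{\mathbb{C}}$ (of dimension $2\dim\overline{\mathfrak{n}}_{\mathbb{C}}$), then for the irreducible subquotient $\pi_K$ use $\operatorname{DIM}(\pi_K)=\tfrac12\dim\operatorname{AV}(\operatorname{Ann}\pi_K)$. Alternatively, and more elementarily: in the noncompact picture $E(G/Q;\tau)_K$ embeds into $\mathbb{C}[\overline{\mathfrak{n}}]\otimes W_{\tau}$, and $\mathfrak{g}_{\mathbb{C}}$ acts there by polynomial-coefficient vector fields each of which raises the polynomial degree by at most one; filtering by polynomial degree then gives $\dim U_n(\mathfrak{g}_{\mathbb{C}})V_0 = O(n^{\dim\overline{\mathfrak{n}}})$, which is exactly the bound you want. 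Either of these repairs would complete the argument, but as it stands the proposal does not establish the lemma.
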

The following result formulates
 that if a subgroup $H$ is 
 \lq\lq{small enough}\rq\rq\
 then the space $(\pi^{-\infty})^H$
 of $H$-invariant distribution vectors of $\pi$
 can be of infinite dimension
 even for a \lq\lq{small}\rq\rq\ admissible representations
 $\pi$:
\begin{corollary}
\label{cor:6.8}
Let $H$ be an algebraic subgroup of $G$, 
 and $Q$ a parabolic subgroup of $G$.  
Assume that $H$ does not have an open orbit in $G/Q$.  
Then for any algebraic finite-dimensional representation 
 $\tau$ of $H$, 
there exists an irreducible admissible Hilbert representation
 $\pi$ of $G$
 such that $\pi$ satisfies 
 the following two properties:
\begin{enumerate}
\item[$\bullet$]
$\pi$ belongs to $Q$-series, 
\item[$\bullet$]
$\dim \invHom {G}{\pi}{C(G/H;\tau)} =\infty$.  
\end{enumerate}
In particular,
$\dim \invHom {G}{\pi^{\infty}}{C^{\infty}(G/H;\tau)}
 =
 \dim \invHom {{\mathfrak {g}}, K}
 {\pi_K}{{\mathcal{A}}(G/H;\tau)}
 =
 \infty$.  
\end{corollary}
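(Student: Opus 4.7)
The plan is to realize $\pi$ as an irreducible subquotient of a degenerate principal series $L^2(G/Q;\lambda(\sigma))$, producing infinitely many symmetry breaking operators via Proposition \ref{prop:N} and then extracting a single irreducible factor by devissage along a composition series.

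First I would produce, for every $m \in \mathbb{N}$, pairwise disjoint $H$-invariant open subsets $U_1,\ldots,U_m \subset G/Q$. Since $H$ acts algebraically on the irreducible real algebraic variety $G/Q$ without an open orbit, a Rosenlicht-type argument provides a non-constant real-valued $H$-invariant continuous function $f$ on a dense $H$-stable open subset $\Omega \subset G/Q$; then $U_j := f^{-1}(I_j)$, for $m$ pairwise disjoint open intervals $I_j \subset f(\Omega)$, gives the required open sets. Next, given $\tau$, I would fix a finite-dimensional algebraic representation $\sigma$ of $G$ with $\invHom{H}{\sigma|_H}{\tau} \neq 0$. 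Such a $\sigma$ always exists by a standard fact in the representation theory of reductive algebraic groups: every algebraic finite-dimensional representation of a closed algebraic subgroup $H \subset G$ admits a nonzero $H$-equivariant map from the restriction of some finite-dimensional algebraic $G$-representation. Applying Proposition \ref{prop:N} to $U_1,\ldots,U_m$, $\sigma$, $\tau$ then gives
\[
 \dim_{\mathbb{C}}\invHom{G}{L^2(G/Q;\lambda(\sigma))}{C(G/H;\tau)} \,\ge\, m \cdot \dim \invHom{H}{\sigma|_H}{\tau},
\]
so the left-hand side is infinite as $m \to \infty$.

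Finally, the Hilbert representation $L^2(G/Q;\lambda(\sigma))$ is admissible and its underlying $(\mathfrak{g},K)$-module $E(G/Q;\lambda(\sigma))$ is of finite length, so there are only finitely many irreducible subquotients $\pi_1,\ldots,\pi_r$, each belonging to $Q$-series by Definition \ref{def:Pseries}. A devissage along a composition series gives $\dim \invHom{G}{L^2(G/Q;\lambda(\sigma))}{C(G/H;\tau)} \le \sum_{i=1}^r \dim \invHom{G}{\pi_i}{C(G/H;\tau)}$, so at least one $\pi := \pi_i$ must satisfy $\dim \invHom{G}{\pi}{C(G/H;\tau)} = \infty$. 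The concluding identity involving $\pi^\infty$, $C^\infty(G/H;\tau)$ and the $(\mathfrak{g},K)$-module $\pi_K$ follows from the injection of Lemma \ref{lem:rest}(1), Frobenius reciprocity (Proposition \ref{prop:Frob}), and the Casselman--Wallach equivalence of categories. The main obstacle is this extraction step: the infinitely many operators produced above must concentrate on at least one irreducible subquotient, which crucially relies on the parabolicity of $Q$ to ensure that $E(G/Q;\lambda(\sigma))$ has finite composition length.
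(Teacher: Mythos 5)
Your proposal follows essentially the same route as the paper's proof of Corollary~\ref{cor:6.8}: produce infinitely many pairwise disjoint $H$-invariant open sets in $G/Q$, feed them into Proposition~\ref{prop:N} to force $\dim\invHom{G}{L^2(G/Q;\lambda(\sigma))}{C(G/H;\tau)}=\infty$, and then devissage along the finite composition series to pin the infinite multiplicity on a single irreducible subquotient $\pi$. The paper delegates the first step to \cite[Lemma 3.5]{xtoshitoshima} (which is exactly the Rosenlicht-type quotient argument you sketch) and compresses the rest into one clause, so you are mostly filling in details that the paper leaves implicit. Two remarks are in order. First, you correctly notice that for a nontrivial $\tau$ one must first choose an irreducible finite-dimensional $G$-module $\sigma$ with $\invHom{H}{\sigma|_H}{\tau}\neq 0$ before Proposition~\ref{prop:N} gives anything, whereas the paper explicitly mentions only $L^2(G/Q,\Omega_{G/Q})$, i.e.\ the $\sigma=\tau={\bf 1}$ case. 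But your blanket assertion that such a $\sigma$ always exists for arbitrary algebraic $H\subset G$ and arbitrary algebraic $\tau$ is stated too strongly: the property that every finite-dimensional $H$-module occurs as an $H$-quotient of some $\sigma|_H$ is equivalent to $H$ being \emph{observable} in $G$ (i.e.\ $G/H$ quasi-affine), and this can fail for algebraic $H$ without an open orbit on $G/Q$ (e.g.\ a proper parabolic of a Levi). This gap is shared with the paper's terse text rather than a divergence from it, but you should either impose observability or first reduce to a $\tau$ that does lift. Second, the concluding chain of identifications is obtained by restricting to smooth vectors and then to $K$-finite vectors, using that a $K$-finite $\ZG{G}$-finite smooth section is real-analytic by elliptic regularity; it is not really an application of Frobenius reciprocity or the Casselman--Wallach equivalence, and citing those is a bit misleading, although the asserted equality is trivially forced once one of the three dimensions is shown to be infinite. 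Modulo these two clarifications the argument is sound and matches the paper's.
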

\begin{proof}
There exist infinitely many disjoint 
 $H$-invariant open sets
 in $G/Q$
 if $H$ does not have an open orbit in $G/Q$
 (see \cite[Lemma 3.5]{xtoshitoshima}).  
Hence Corollary \ref{cor:6.8}
 follows from Proposition \ref{prop:N}
 because there exist at most finitely many 
 irreducible subquotients 
 in the Hilbert representation
 of $G$
 on $L^2(G/Q, \Omega_{G/Q})$.  
\end{proof}

\begin{corollary}
\label{cor:6.9}
Let $G \supset G'$ be algebraic real reductive 
 Lie groups
 and $Q$ and $Q'$ parabolic subgroups
 of $G$ and $G'$, 
respectively. 
Assume that $Q'$ does not have 
 an open orbit in $G/Q$.  
Then there exist irreducible admissible Hilbert representations
 $\pi$ and $\tau$
 of $G$ and $G'$, 
respectively, 
such that $(\pi,\tau)$ satisfies
 the following two properties:
\begin{enumerate}
\item[$\bullet$]
$\pi$ belongs to $Q$-series,
 $\tau$ belongs to $Q'$-series.  
\item[$\bullet$]
$\dim \invHom{G'}{\pi|_{G'}}{\tau}=\infty$.  
\end{enumerate}
In particular,
 $\dim \invHom{G'}{\pi^{\infty}|_{G'}}{\tau^{\infty}}=\infty$.  
\end{corollary}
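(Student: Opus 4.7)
The plan is to deduce Corollary \ref{cor:6.9} from Corollary \ref{cor:6.8} applied to the product group $\tilde G := G \times G'$, its diagonal subgroup $\tilde H := \operatorname{diag} G'$, and the product parabolic $\tilde Q := Q \times Q'$ (with trivial $\tau$). I would first verify that the geometric hypothesis transfers: under the identification $\tilde G/\tilde Q \simeq G/Q \times G'/Q'$, the $\tilde H$-action is the diagonal $G'$-action, and an open $\tilde H$-orbit would, by projection onto the first factor together with transitivity of $G'$ on the second, yield an open $Q'$-orbit in $G/Q$. Hence the hypothesis of Corollary \ref{cor:6.9} forces the hypothesis of Corollary \ref{cor:6.8}.

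Corollary \ref{cor:6.8} then delivers an irreducible admissible Hilbert representation $\Pi$ of $\tilde G$ in $\tilde Q$-series with $\dim \operatorname{Hom}_{\tilde G}(\Pi, C(\tilde G/\tilde H)) = \infty$. By the standard decomposition of irreducible admissible representations of a product of real reductive groups (via the equivalence of categories of Section \ref{subsec:adm}), $\Pi$ factors as $\pi \boxtimes \tau_0$ for irreducible admissible Hilbert representations $\pi$ of $G$ and $\tau_0$ of $G'$; and since $\Pi$ is a subquotient of some induced module $E(\tilde G/\tilde Q;\sigma_1 \boxtimes \sigma_2) \simeq E(G/Q;\sigma_1) \boxtimes E(G'/Q';\sigma_2)$, the factors $\pi$ and $\tau_0$ individually lie in $Q$-series and $Q'$-series, respectively. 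Combining Frobenius reciprocity (Proposition \ref{prop:Frob}) with Lemma \ref{lem:rest}(2), I would then identify
\[
  \operatorname{Hom}_{\tilde G}(\pi \boxtimes \tau_0, C(\tilde G/\tilde H))
  \cong \operatorname{Hom}_{G'}(\pi \otimes \tau_0, \mathbb{C})
  \cong \operatorname{Hom}_{G'}(\pi|_{G'}, \tau_0^{\vee}),
\]
so setting $\tau := \tau_0^{\vee}$ produces the desired infinite-dimensional space of symmetry breaking operators. To see that $\tau$ is still in $Q'$-series, one uses that the contragredient of $E(G'/Q';\sigma)$ is isomorphic to an induced module $E(G'/Q';\sigma^{\vee} \otimes \chi_{G'/Q'})$ from the same parabolic $Q'$, so the $Q'$-series is closed under contragredient. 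The final assertion $\dim \operatorname{Hom}_{G'}(\pi^{\infty}|_{G'}, \tau^{\infty}) = \infty$ then follows at once from Lemma \ref{lem:rest}(1).

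The principal technical point I expect is in the middle step: carefully justifying that the irreducible admissible representation $\Pi$ of $G \times G'$ factors as an outer tensor product with each factor inheriting the appropriate parabolic series, rather than merely lying in the series attached to $\tilde Q = Q \times Q'$. Once the outer tensor factorization is established and combined with the observation that subquotients of $E(G/Q;\sigma_1) \boxtimes E(G'/Q';\sigma_2)$ are outer tensor products of subquotients of the factors, all remaining steps are direct applications of results established earlier in the paper.
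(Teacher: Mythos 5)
Your proof is correct, and it follows the same underlying idea as the paper's (the product-group/diagonal-subgroup reduction, ultimately resting on the integral intertwining operators of Proposition~\ref{prop:N}), but it is organized via a slightly different chain of citations. The paper cites Proposition~\ref{prop:L} directly: $Q'$ having no open orbit in $G/Q$ yields infinitely many disjoint $Q'$-invariant open sets, hence $\dim\invHom{G'}{L^2(G/Q,\Omega_{G/Q})}{L^2(G'/Q')}=\infty$, and then one extracts irreducible subquotients $\pi$ and $\tau$ of the two $L^2$-spaces exactly as in the proof of Corollary~\ref{cor:6.8}. You instead apply Corollary~\ref{cor:6.8} to $\tilde G=G\times G'$, $\tilde H=\operatorname{diag}G'$, $\tilde Q=Q\times Q'$, and then translate back using Frobenius reciprocity and Lemma~\ref{lem:rest}(2) --- which is precisely the content of the (omitted) proof of Proposition~\ref{prop:L}. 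So you are in effect re-deriving Proposition~\ref{prop:L} inside your argument. The route is a bit longer than the one the paper intends, and it forces you to handle two extra points the paper's route avoids: (a) the geometric transfer, namely that an open $\operatorname{diag}G'$-orbit in $G/Q\times G'/Q'$ would produce an open $Q'$-orbit in $G/Q$ by intersecting with the slice $G/Q\times\{eQ'\}$ (your sketch of this is correct); and (b) stability of $Q'$-series under contragredient, since your $\tau$ appears as $\tau_0^{\vee}$ rather than directly as a subquotient of $L^2(G'/Q',\Omega_{G'/Q'})$ (again correct, via $E(G'/Q';\sigma)^{\vee}\simeq E(G'/Q';\sigma^{\ast})$). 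The outer-tensor factorization of irreducible admissible representations of a product group, which you flag as the delicate step, is standard and your use of it is sound. In short: correct, but a slightly more roundabout path than the paper's invocation of Proposition~\ref{prop:L}.
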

\begin{proof}
Corollary \ref{cor:6.9}
 follows from Proposition \ref{prop:L}.  
Since the argument is similar to the proof
 of Corollary \ref{cor:6.8}, 
 and we omit it.  
\end{proof}
\section{Symmetry breaking operators
 and construction of Shintani functions}
\label{sec:break}
In this section
 we construct Shintani functions
 of moderate growth from 
symmetry breaking operators
 of the restriction
 of admissible smooth representations.

\begin{proposition}
\label{prop:break}
Let $\pi^{\infty}$ be a spherical, 
 admissible smooth representation
 of $G$, 
 and $\tau^{\infty}$ that of $G'$.  
Suppose that $\pi^{\infty}$ and $\tau^{\infty}$ have 
 $\ZG G$ and $\ZG {G'}$-infinitesimal characters
 $\lambda$ and $-\nu$, 
 respectively.  
\par\noindent
{\rm{1)}}\enspace
Let ${\bf{1}}_{\pi}$ and ${\bf{1}}_{\tau^{\vee}}$
 be non-zero spherical vectors
 of $\pi_K$ and $\tau_{K'}^{\vee}$, 
 respectively.  
Then there is a natural linear map
\begin{equation}
\label{eqn:break}
\invHom{G'}{\pi^{\infty}}{\tau^{\infty}}
\to 
\operatorname{Sh}_{\operatorname{mod}}(\lambda,\nu), 
\qquad
T \mapsto F
\end{equation}
defined by 
\[
F(g):=\langle T \circ \pi^{\infty}(g) {\bf{1}}_{\pi}, {\bf{1}}_{\tau^{\vee}}
\rangle 
\quad
\text{for }\,\, g \in G.  
\]
\par\noindent
{\rm{2)}}\enspace
Assume that the spherical vectors ${\bf{1}}_{\pi}$
 and ${\bf{1}}_{\tau^{\vee}}$
 are cyclic 
 in $\pi_K$ and $\tau_{K'}^{\vee}$, 
respectively.  
Then \eqref{eqn:break} is injective.  
In particular,
 if both $\pi^{\infty}$ and $\tau^{\infty}$ 
 are irreducible,
 \eqref{eqn:break} is injective.  
\end{proposition}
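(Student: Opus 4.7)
The plan is to verify, directly from the defining formula of $F$, the four properties needed for $F \in \operatorname{Sh}_{\operatorname{mod}}(\lambda,\nu)$: bi-$(K',K)$-invariance, the right and left infinitesimal character conditions, and moderate growth. The bi-invariance is immediate: the $K$-fixedness of ${\bf{1}}_\pi$ gives right $K$-invariance, while the $G'$-equivariance of $T$ combined with the $K'$-fixedness of ${\bf{1}}_{\tau^\vee}$ gives left $K'$-invariance. Moderate growth will follow at once from the observation at the end of Section \ref{subsec:mod}: the assignment $u \mapsto \langle Tu, {\bf{1}}_{\tau^\vee}\rangle$ is a continuous linear functional on the admissible smooth Fr\'echet representation $\pi^\infty$, which has moderate growth by Casselman--Wallach, so $F$ is a matrix coefficient of $\pi^\infty$ against the $K$-finite vector ${\bf{1}}_\pi$ and hence lies in $C^\infty_{\operatorname{mod}}(G)$.

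For the right infinitesimal character condition, I plan to differentiate inside the pairing, obtaining $(R_u F)(g) = \langle T\pi^\infty(g)\, d\pi^\infty(u){\bf{1}}_\pi, {\bf{1}}_{\tau^\vee}\rangle$, and then use that $u \in \ZG G$ acts on ${\bf{1}}_\pi \in \pi_K$ by the scalar $\chi_\lambda(u)$. For the left condition, the cleanest route is to dualize: write $F(g) = \langle \pi^\infty(g){\bf{1}}_\pi, T^{\ast}{\bf{1}}_{\tau^\vee}\rangle$, where $T^{\ast}: (\tau^\vee)^{-\infty} \to (\pi^\vee)^{-\infty}$ is the $G'$-equivariant transpose of $T$, and transport the left differentiation across the pairing to the second slot, giving $(L_v F)(g) = \langle \pi^\infty(g){\bf{1}}_\pi, T^{\ast} d\tau^\vee(v){\bf{1}}_{\tau^\vee}\rangle$. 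The hypothesis that $\tau^\infty$ has infinitesimal character $-\nu$ is exactly what is needed so that, under the paper's normalization of the Harish-Chandra isomorphism, $\tau^\vee$ acts on the spherical vector ${\bf{1}}_{\tau^\vee}$ by $\chi_\nu(v)$. Keeping the sign conventions straight between a representation and its contragredient is the one nontrivial bookkeeping point in Part 1.

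For Part 2, I would use a two-stage cyclicity argument. Assume $F \equiv 0$. Right-translating by $\exp(tX)$ and differentiating iteratively turns $F \equiv 0$ into $\langle T\, d\pi(u){\bf{1}}_\pi, {\bf{1}}_{\tau^\vee}\rangle = 0$ for every $u \in U({\mathfrak g}_{\mathbb C})$, and cyclicity of ${\bf{1}}_\pi$ in $\pi_K$ upgrades this to $\langle Tw, {\bf{1}}_{\tau^\vee}\rangle = 0$ for all $w \in \pi_K$. Next, left-translating by $G'$ (using $G'$-equivariance of $T$ as in the left infinitesimal character argument) and again differentiating gives $\langle Tw, d\tau^\vee(v){\bf{1}}_{\tau^\vee}\rangle = 0$ for every $v \in U({\mathfrak g}'_{\mathbb C})$, and cyclicity of ${\bf{1}}_{\tau^\vee}$ in $\tau^\vee_{K'}$ then yields $\langle Tw, v'\rangle = 0$ for all $v' \in \tau^\vee_{K'}$. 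The admissibility of $\tau^\infty$ as a $G'$-representation ensures that $\tau^\vee_{K'}$ separates points of $\tau^\infty$, so $Tw = 0$ for all $w \in \pi_K$, and continuity of $T$ together with density of $\pi_K$ in $\pi^\infty$ (Casselman--Wallach) finally forces $T = 0$. The last clause is automatic, since every nonzero vector in an irreducible admissible smooth representation is cyclic.
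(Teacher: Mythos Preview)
Your proof is correct and takes essentially the same approach as the paper. The paper streamlines things slightly by first recording the single identity $F(hg) = \langle T\pi^\infty(g){\bf 1}_\pi, (\tau^\vee)^\infty(h^{-1}){\bf 1}_{\tau^\vee}\rangle$ for $h \in G'$ and then reading off both the left infinitesimal character and the injectivity in Part~2 directly from it---avoiding your detour through $T^{\ast}$ and distribution vectors, and in Part~2 applying the two cyclicity hypotheses in the opposite order (first ${\bf 1}_{\tau^\vee}$ to get $T\pi^\infty(g){\bf 1}_\pi = 0$, then ${\bf 1}_\pi$); for moderate growth the paper simply writes out the seminorm chain underlying the matrix-coefficient fact you cite from Section~\ref{subsec:mod}.
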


\begin{remark}
{\rm{
In the setting of Proposition \ref{prop:break}, 
 if we drop the assumption
 that ${\bf{1}}_{\pi}$ is cyclic, 
then the homomorphism \eqref{eqn:break}
 may not be injective.  
In fact,
 we shall see in Section \ref{sec:sbon}
 that there is a countable set
 of $(\lambda,\nu)$
 for which the following three conditions are satisfied:
\begin{enumerate}
\item[$\bullet$]
$\dim_{\mathbb{C}}\invHom{G'}{\pi^{\infty}}{\tau^{\infty}}=2$, 
\item[$\bullet$]
$\dim_{\mathbb{C}}\operatorname{Sh}_{\operatorname{mod}}(\lambda,\nu)=1$, 
\item[$\bullet$]
${\bf{1}}_{\tau^{\vee}}$ is cyclic in $\tau^{\vee}$.  
\end{enumerate}
}}
\end{remark}
\begin{proof}
[Proof of Proposition \ref{prop:break}]
1)\enspace
Since $T \in \invHom {G'}{\pi^{\infty}}{\tau^{\infty}}$, 
 the function $F \in C^{\infty}(G)$ satisfies
\begin{align}
F(hg)=& \langle \tau^{\infty}(h) \circ \tau \circ \pi^{\infty} (g) 
                {\bf{1}}_{\pi}, 
                {\bf{1}}_{\tau^{\vee}}\rangle 
\notag
\\
     =& \langle T \circ \pi^{\infty} (g) {\bf{1}}_{\pi}, 
                ({\tau^{\vee}})^{\infty}(h^{-1}) {\bf{1}}_{\tau^{\vee}}\rangle 
\label{eqn:Fgh}
\end{align}
 for all $h \in G'$ and $g \in G$.  
Therefore we have
\begin{align*}
F(k' g k) =&F(g)
\quad
\text{for }\,\, k ' \in K' \,\,\text{ and }\,\, k \in K, 
\\
(L_Y F)(g)=&\langle T \circ \pi^{\infty}(g) {\bf{1}}_{\pi}, 
                  d \tau^{\vee} (Y) {\bf{1}}_{\tau^{\vee}} \rangle 
\quad
\text{for }\,\, Y \in {\mathfrak {g}}' \subset U({\mathfrak {g}}_{\mathbb{C}}'), 
\\
(R_X F)(g)=&\langle T \circ \pi^{\infty}(g) d \pi (X) {\bf{1}}_{\pi}, 
                   {\bf{1}}_{\tau^{\vee}} \rangle 
\quad
\text{for }\,\, X \in {\mathfrak {g}} \subset U({\mathfrak {g}}_{\mathbb{C}}).  
\end{align*}
Since $u \in \ZG G$ acts on $\pi^{\infty}$
 as the scalar multiple
 of $\chi_{\lambda}(u)$, 
 we have $d\pi^{\infty}(u) {\bf{1}}_{\pi}=\chi_{\lambda}(u){\bf{1}}_{\pi}$, 
 and therefore
 $R_u F =\chi_{\lambda}(u) F$.  
Likewise,
 for $v \in \ZG {G'}$, 
 we have $d (\tau^{\vee})^{\infty}(v) {\bf{1}}_{\tau^{\vee}}
 =\chi_{\nu}(v){\bf{1}}_{\tau^{\vee}}$, 
 and thus $L_v F =\chi_{\nu}(v) F$.  
Hence $F \in \operatorname{Sh}(\lambda,\nu)$.

Let $V_{\pi}^{\infty}$ and $W_{\tau}^{\infty}$
 be the representation spaces 
 of ${\pi}^{\infty}$ and ${\tau}^{\infty}$, 
respectively.  
First we find a continuous seminorm $| \cdot |_1$ on $W_{\tau}^{\infty}$
 and a constant $C_1$
 such that 
\[
   |\langle w, {\bf{1}}_{\tau^{\vee}}\rangle| \le C_1 |w|_1
\quad
  \text{ for any }\,\,
  w \in W_{\tau}^{\infty}.  
\]
Second, 
since $T:V_{\pi}^{\infty} \to W_{\tau}^{\infty}$
 is continuous, 
 there exist a continuous seminorm $| \cdot |_2$ on $V_{\pi}^{\infty}$
 and a constant $C_2$
 such that 
\[
   |T v |_1 \le C_2 |v|_2
\quad
  \text{ for any }\,\,
  v\in V_{\pi}^{\infty}.  
\]
Third, 
since $\pi^{\infty}$ has moderate growth,
 there exist constants $C_3>0$, $d \in {\mathbb{R}}$
 and a continuous seminorm 
 $| \cdot |_3$ on $V_{\pi}^{\infty}$
 such that 
\[
  |{\pi}^{\infty}(g) d {\pi}(u) {\bf{1}}_{\pi}|_2  
  \le C_3  |d {\pi}^{\infty} (u) {\bf{1}}_{\pi}|_3 \| g \|^d
  \quad
  \text{ for any $g \in G$ and for any $u
         \in U({\mathfrak {g}}_{\mathbb{C}})$}.  
\]
Therefore
$
   (R_uF)(g) = \langle T \circ \pi^{\infty}(g) d \pi (u) {\bf{1}}_{\pi}, 
                   {\bf{1}}_{\tau^{\vee}} \rangle 
$
satisfies
the following inequality:
\[
  |(R_u F)(g)| 
  \le 
  C_1 C_2 C_3  |d {\pi}^{\infty} (u) {\bf{1}}_{\pi}|_3 \| g \|^d
  \quad
  \text{ for any }\,\, g \in G.  
\]
Hence $F \in C^{\infty}(G)$ has moderate growth.  

\par\noindent
2)\enspace
Suppose $F\equiv 0$.  
Since ${\bf{1}}_{\tau^{\vee}}$ is a cyclic vector,
 we have $T \circ \pi^{\infty}(g){\bf{1}}_{\pi}=0$
 for any $g \in G$
 by \eqref{eqn:Fgh}.  
Since ${\bf{1}}_{\pi}$ is a cyclic vector,
 we have $T=0$.  
Therefore the map \eqref{eqn:break} is injective.   
\end{proof}

\begin{corollary}
\label{cor:Shtriv}
Suppose that there are $m$ disjoint $P'$-invariant open
 sets 
 in $G/P$.  
Then 
\begin{equation}
\label{eqn:Shtriv}
  \dim_{\mathbb{C}} \operatorname{Sh}_{\operatorname{mod}}
  (\rho_{\mathfrak {g}}, \rho_{\mathfrak {g}'})\ge m.  
\end{equation}
In particular,
 if $\operatorname{Sh}_{\operatorname{mod}}
  (\rho_{\mathfrak {g}}, \rho_{\mathfrak {g}'})$
 is finite-dimensional, 
 then the pair $(G,G')$
 of reductive groups satisfies {\rm{(PP)}}.  
\end{corollary}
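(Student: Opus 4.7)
The plan is to combine Proposition~\ref{prop:L} with Proposition~\ref{prop:break}. First I would apply Proposition~\ref{prop:L} with $Q=P$ and $Q'=P'$ the chosen minimal parabolic subgroups; the hypothesis of $m$ disjoint $P'$-invariant open subsets of $G/P$ then produces $m$ linearly independent continuous $G'$-intertwining operators
\[
T_1,\ldots,T_m\colon L^2(G/P,\Omega_{G/P})\longrightarrow L^2(G'/P').
\]
Passage to smooth vectors via Lemma~\ref{lem:rest}(1) preserves linear independence and yields symmetry breaking operators $T_j^{\infty}\colon\pi^{\infty}\to\tau^{\infty}$, where $\pi^{\infty}:=L^2(G/P,\Omega_{G/P})^{\infty}$ and $\tau^{\infty}:=L^2(G'/P')^{\infty}$ are spherical admissible smooth representations. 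Since the trivial representation occurs as their unique spherical irreducible quotient (recall $\lambda(\mathbf{1})={\mathbb{C}}_{2\rho}$ from Section~\ref{subsec:Qseries}), their $\ZG{G}$- and $\ZG{G'}$-infinitesimal characters are $\rho_{\mathfrak{g}}$ and $\rho_{\mathfrak{g}'}$ respectively.

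Next, Proposition~\ref{prop:break}(1) converts each $T_j^{\infty}$ into a Shintani function $F_j\in\operatorname{Sh}_{\operatorname{mod}}(\rho_{\mathfrak{g}},\rho_{\mathfrak{g}'})$; here I use that $-\rho_{\mathfrak{g}'}$ and $\rho_{\mathfrak{g}'}$ lie in a common $W(\mathfrak{j}'_{\mathbb{C}})$-orbit (they are related by the longest Weyl element) and hence define the same character of $\ZG{G'}$. The main obstacle is establishing linear independence of $F_1,\ldots,F_m$: Proposition~\ref{prop:break}(2) would suffice but requires cyclicity of the spherical vectors, which fails in general for a reducible unitary principal series at the trivial parameter. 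I would circumvent this by exploiting the explicit construction from the proofs of Propositions~\ref{prop:N} and~\ref{prop:Poisson-1}: each $T_j$ is built from the characteristic function $\chi_j$ of the disjoint $P'$-invariant open set $U_j\subset G/P$, so that
\[
F_j(g)=\langle T_j^{\infty}\,\pi^{\infty}(g)\mathbf{1}_{\pi},\,\mathbf{1}_{\tau^{\vee}}\rangle
\]
becomes a Poisson-type integral whose effective kernel is supported over $U_j$. The pairwise disjointness of the $U_j$ on the real flag variety $G/P$ then rules out any nontrivial linear relation $\sum c_j F_j\equiv 0$.

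The ``in particular'' assertion is then immediate. If $\operatorname{Sh}_{\operatorname{mod}}(\rho_{\mathfrak{g}},\rho_{\mathfrak{g}'})$ has finite dimension $d$, the inequality \eqref{eqn:Shtriv} forces $m\le d$ for every family of disjoint $P'$-invariant open subsets of $G/P$. But by \cite[Lemma 3.5]{xtoshitoshima}, already invoked in the proof of Corollary~\ref{cor:6.8}, the absence of an open $P'$-orbit in $G/P$ would produce infinitely many such disjoint open sets, contradicting this bound. Therefore $P'$ must have an open orbit in $G/P$, which is precisely condition~$(\mathrm{PP4})$ of Definition~\ref{def:pp}, so that $(G,G')$ satisfies~$(\mathrm{PP})$.
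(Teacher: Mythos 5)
Your overall strategy tracks the paper's proof: apply Proposition~\ref{prop:L} with $(Q,Q')=(P,P')$ to produce $m$ linearly independent Hilbert intertwiners, pass to smooth vectors via Lemma~\ref{lem:rest}(1), feed the result into Proposition~\ref{prop:break}, and conjugate $-\rho_{\mathfrak g'}$ to $\rho_{\mathfrak g'}$ by the longest Weyl element. The treatment of the ``in particular'' clause via the contrapositive and \cite[Lemma~3.5]{xtoshitoshima} is likewise the paper's argument (your reference to (PP4) should be (PP3), but the five conditions are equivalent so this is harmless).

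The genuine gap is in the injectivity step. You assert that the cyclicity hypothesis of Proposition~\ref{prop:break}(2) ``fails in general for a reducible unitary principal series at the trivial parameter.'' This is a misidentification of the representations involved: $\pi=L^2(G/P,\Omega_{G/P})$ and $\tau^{\vee}=L^2(G'/P',\Omega_{G'/P'})$ have inducing parameters $\mu=2\rho_{\mathfrak n}$ and $\mu'=2\rho_{\mathfrak n'}$, not the trivial parameter. These satisfy $\operatorname{Re}\langle\mu-\rho_{\mathfrak n},\alpha\rangle=\langle\rho_{\mathfrak n},\alpha\rangle\ge 0$ for every $\alpha\in\Sigma^{+}(\mathfrak g,\mathfrak a)$, so by Proposition~\ref{prop:Poisson}(2) (Kostant) the spherical vectors of $\pi$ and $\tau^{\vee}$ \emph{are} cyclic, even though these modules are reducible (the trivial representation sits as the unique irreducible quotient, not a subrepresentation). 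Thus Proposition~\ref{prop:break}(2) applies without any modification, which is exactly what the paper does.

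Moreover, the proposed workaround via disjoint kernel supports does not actually bypass cyclicity. Disjointness makes $\chi_1,\dots,\chi_m$ linearly independent as $\operatorname{diag}G'$-invariant distribution vectors, but the passage $\chi\mapsto\bigl(g\mapsto\langle\chi,\;(\pi\boxtimes\tau^{\vee})^{\infty}(g,e)(\mathbf 1_{\pi}\otimes\mathbf 1_{\tau^{\vee}})\rangle\bigr)$ is a Poisson-type transform, and the claim that ``pairwise disjointness rules out any nontrivial linear relation $\sum c_j F_j\equiv 0$'' is precisely the assertion that this transform is injective on $\operatorname{span}\{\chi_j\}$. That injectivity is \emph{equivalent} to the cyclicity of the spherical vectors (equivalently, the dominance of the parameter, cf.\ Proposition~\ref{prop:Poisson}(4)); it does not follow from disjointness of supports alone, since a priori the pairing against the fixed $K$-spherical test vectors could kill the difference. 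So the detour forces you to reprove a special case of the very fact you were trying to avoid; noting that Proposition~\ref{prop:Poisson}(2) gives cyclicity at $\mu=2\rho_{\mathfrak n}$ removes the obstacle and collapses your argument to the paper's proof.
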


\begin{proof}
We denote by $\pi$ the Hilbert representation of $G$
 on $L^2(G/P, \Omega_{G/P})$, 
 and by $\tau$ that of $G'$
 on $L^2(G'/P')$.  
By Proposition \ref{prop:L}, 
 we have
\[
  \dim_{\mathbb{C}} \invHom{G'}{\pi|_{G'}}{\tau} \ge m.  
\]
On the other hand,
 since both $\pi$ and $\tau^{\vee}$
 contain spherical cyclic vectors,
 we have
\[
\dim_{\mathbb{C}} \operatorname{Sh}_{\operatorname{mod}}
     (\rho_{\mathfrak {g}}, -\rho_{\mathfrak {g}'})
\ge 
\dim_{\mathbb{C}} \invHom{G'}{\pi^{\infty}|_{G'}}{\tau^{\infty}}
\]
{}from Proposition \ref{prop:break}.  
Combining these inequalities with \eqref{eqn:Hsmooth}, 
 we have obtained
\[
\dim_{\mathbb{C}} \operatorname{Sh}_{\operatorname{mod}}
     (\rho_{\mathfrak {g}}, -\rho_{\mathfrak {g}'})
\ge m.  
\]
Since $-\rho_{\mathfrak {g}'}$ is conjugate
 to $\rho_{\mathfrak {g}'}$
 by the longest element
 of the Weyl group $W({\mathfrak {j}}_{\mathbb{C}}')$, 
 we have proved \eqref{eqn:Shtriv}.  

Finally,
 if $(G,G')$ does not satisfy (PP), 
 then there exist infinitely many disjoint
 $P'$-invariant open sets in $G/P$, 
 and therefore we get 
$
  \dim_{\mathbb{C}}
  \operatorname{Sh}_{\operatorname{mod}}
  (\rho_{{\mathfrak {g}}}, \rho_{{\mathfrak {g}}'})
  = \infty
$ from \eqref{eqn:Shtriv}.  
\end{proof}

\section{Boundary values of Shintani functions}
\label{sec:PS}

In this section we realize Shintani functions
 as joint eigenfunctions
 of invariant differential operators
 on the Riemannian symmetric space
 $X=(G \times G')/(K \times K')$, 
and then as hyperfunctions
 on the minimal boundary 
 $Y=(G \times G')/(P \times P')$
 of the compactification of $X$. 
The main results
 of this section 
 are Theorems \ref{thm:nonzero} and \ref{thm:ps}.  
We prove these theorems
 in Sections \ref{subsec:nz}
 and \ref{subsec:ps}, 
 respectively,
 after giving a brief summary
 of the preceding results 
 of harmonic analysis 
 on Riemannian symmetric spaces
 in Sections \ref{subsec:DGK} and \ref{subsec:Pb}.  

\subsection{Symmetry breaking of principal series representations}
\label{subsec:PS}
Denote by $\theta$ the Cartan involution 
of the Lie algebra ${\mathfrak {g}}$
 corresponding to the maximal compact subgroup $K$
 of $G$.  
We take a maximal abelian subspace ${\mathfrak {a}}$
 in the vector space $\{X \in {\mathfrak {g}}: \theta X = -X\}$, 
 and set 
\[
   W({\mathfrak {a}}):=N_K({\mathfrak {a}})/Z_K({\mathfrak {a}}).  
\]
We fix a positive system 
 $\Sigma^+({\mathfrak {g}}, {\mathfrak {a}})$
 of the restricted root system 
 $\Sigma({\mathfrak {g}}, {\mathfrak {a}})$, 
 and define a minimal parabolic subalgebra
 ${\mathfrak {p}}$ of ${\mathfrak {g}}$ by 
\[
   {\mathfrak {p}}
   ={\mathfrak {m}}+{\mathfrak {a}}+{\mathfrak {n}}
   ={\mathfrak {l}}+{\mathfrak {n}}, 
\]
 where 
$
   {\mathfrak {l}}:=Z_{\mathfrak {g}}({\mathfrak {a}})
  =\{X \in {\mathfrak {a}}
    :[H,X]=0\,\,\text{ for all }\,\,H \in{\mathfrak {a}} \}
$, 
 ${\mathfrak {m}}:={\mathfrak {l}} \cap {\mathfrak {k}}$, 
 and ${\mathfrak {n}}$ is the sum
 of the root spaces
 for all $\alpha \in \Sigma^+({\mathfrak {g}}, {\mathfrak {a}})$.  
Let $P=MAN$ be the minimal parabolic subgroup
 of $G$
 with Lie algebra ${\mathfrak {p}}$.

We take a Cartan subalgebra ${\mathfrak {t}}$
 in ${\mathfrak {m}}$.  
Then ${\mathfrak {j}}:={\mathfrak {t}}+{\mathfrak {a}}$
 is a maximally split Cartan subalgebra
 of ${\mathfrak {g}}$.  
We fix a positive system
 $\Delta^+({\mathfrak {m}}_{\mathbb{C}},{\mathfrak {t}}_{\mathbb{C}})$.  
Let $\rho_{{\mathfrak {n}}} \in {\mathfrak {a}}^{\vee}$
 be half the sum 
of the elements in $\Sigma^+({\mathfrak {g}}, {\mathfrak {a}})$
 counted with multiplicities,
 and $\rho_{{\mathfrak {l}}} \in {\mathfrak {t}}_{\mathbb{C}}^{\vee}$
 that of $\Delta^+({\mathfrak {m}}_{\mathbb{C}},{\mathfrak {t}}_{\mathbb{C}})$.
The positive systems
 $\Sigma^+({\mathfrak {g}}, {\mathfrak {a}})$
 and $\Delta^+({\mathfrak {m}}_{\mathbb{C}},{\mathfrak {t}}_{\mathbb{C}})$
 determine naturally 
 a positive system 
 $\Delta^+({\mathfrak {g}}_{\mathbb{C}},{\mathfrak {j}}_{\mathbb{C}})$.  
Then we have 
\[
   \rho_{{\mathfrak {g}}}=\rho_{{\mathfrak {l}}}+\rho_{{\mathfrak {n}}}
 \in {{\mathfrak {j}}}_{\mathbb{C}}^{\vee}
 ={{\mathfrak {t}}}_{\mathbb{C}}^{\vee}+{{\mathfrak {a}}}_{\mathbb{C}}^{\vee}, 
\]  
where we regard
 ${{\mathfrak {t}}}_{\mathbb{C}}^{\vee}$
 and ${{\mathfrak {a}}}_{\mathbb{C}}^{\vee}$
 as subspaces of ${{\mathfrak {j}}}_{\mathbb{C}}^{\vee}$
  via the direct sum decomposition 
 ${\mathfrak {j}}={\mathfrak {t}}+{\mathfrak {a}}$.  
Then $\rho_{{\mathfrak {l}}}+{\mathfrak {a}}_{\mathbb{C}}^{\vee}
 =\rho_{{\mathfrak {g}}}+ {\mathfrak {a}}_{\mathbb{C}}^{\vee}$
 is an affine subspace of 
 ${\mathfrak {j}}_{\mathbb{C}}^{\vee}$.

Analogous notation is applied
 to the reductive subgroup $G'$.  
In particular,
 ${\mathfrak {j}}'={\mathfrak {t}}'+{\mathfrak {a}}'$
 is a maximally split Cartan subalgebra
 of ${\mathfrak {g}}'$.

We recall 
$(\lambda,\nu) \in 
 {\mathfrak {j}}_{\mathbb{C}}^{\vee}/W({\mathfrak {j}}_{\mathbb{C}})
 \times
 ({\mathfrak {j}}_{\mathbb{C}}')^{\vee}/W({\mathfrak {j}}_{\mathbb{C}}')
 \simeq
 \HomCalg {\ZG G \times \ZG {G'}}{{\mathbb{C}}}$. 
Let us begin with a non-vanishing condition
 for $\operatorname{Sh} (\lambda,\nu)$.  

\begin{theorem}
\label{thm:nonzero}
If $\operatorname{Sh}(\lambda,\nu)\ne \{0\}$, 
then 
\begin{equation}
\label{eqn:ln}
  \lambda \in 
  W({\mathfrak {j}}_{\mathbb{C}}) 
 (\rho_{{\mathfrak {l}}} + {\mathfrak {a}}_{\mathbb{C}}^{\vee})
 \,\,\text{ and }\,\,
  \nu \in 
  W({\mathfrak {j}}_{\mathbb{C}}') 
 (\rho_{{\mathfrak {l}}'} + ({\mathfrak {a}}_{\mathbb{C}}')^{\vee}).  
\end{equation}
\end{theorem}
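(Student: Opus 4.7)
The plan is to realize a nonzero Shintani function as a joint eigenfunction of the algebra $\mathbb{D}(G/K)$ of $G$-invariant differential operators on the Riemannian symmetric space $G/K$ (and the analogous statement for $G'/K'$), and then invoke Harish-Chandra's structure theorem together with the $\rho_{\mathfrak{g}}$-shifted normalization of the Harish-Chandra isomorphism fixed in Section \ref{subsec:HC} to read off the constraint on the infinitesimal character. Conditions (1) and (2) of Definition \ref{def:Sh} immediately give the reduction: passing to the right-$K$-invariant quotient, a nonzero $f \in \operatorname{Sh}(\lambda,\nu)$ descends to a nonzero $\bar f \in C^{\infty}(G/K;\chi_{\lambda}^R)$, and since $\ZG{G}$ commutes with left $G$-translation, its action on $C^{\infty}(G/K)$ factors through the natural surjection $\ZG{G} \twoheadrightarrow \mathbb{D}(G/K)$.

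Harish-Chandra's theorem identifies $\mathbb{D}(G/K)$ with $S({\mathfrak{a}}_{\mathbb{C}})^{W({\mathfrak{a}})}$, so its characters are parametrized by ${\mathfrak{a}}_{\mathbb{C}}^{\vee}/W({\mathfrak{a}})$. The existence of the nonzero joint eigenfunction $\bar f$ forces $\chi_{\lambda}$ to factor through $\mathbb{D}(G/K)$, so it must arise from some $\mu \in {\mathfrak{a}}_{\mathbb{C}}^{\vee}$. Under the normalization of Section \ref{subsec:HC}, the resulting $\ZG{G}$-character $\chi_{\lambda}$ corresponds to $\rho_{\mathfrak{l}} + \mu \in {\mathfrak{t}}_{\mathbb{C}}^{\vee} + {\mathfrak{a}}_{\mathbb{C}}^{\vee} = {\mathfrak{j}}_{\mathbb{C}}^{\vee}$; the shift can be pinned down concretely by checking the trivial representation of $G$, which carries the $\mathbb{D}(G/K)$-character with parameter $\mu = \rho_{\mathfrak{n}}$ and the $\ZG{G}$-character $\chi_{\rho_{\mathfrak{g}}} = \chi_{\rho_{\mathfrak{l}} + \rho_{\mathfrak{n}}}$. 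Therefore $\lambda \in W({\mathfrak{j}}_{\mathbb{C}})(\rho_{\mathfrak{l}} + {\mathfrak{a}}_{\mathbb{C}}^{\vee})$.

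For the constraint on $\nu$ I would run the symmetric argument on the left. Condition (3) of Definition \ref{def:Sh} says $L_v f = \chi_{\nu}(v) f$ for all $v \in \ZG{G'}$, and condition (1) gives left $K'$-invariance. Choosing any $g_0 \in G$ with $f(g_0) \neq 0$, the function $g' \mapsto f(g' g_0)$ descends to a nonzero element of $C^{\infty}(G'/K';\chi_{\nu}^L)$ on the Riemannian symmetric space $G'/K'$, and the same Harish-Chandra analysis applied to $\mathbb{D}(G'/K')$ produces $\nu \in W({\mathfrak{j}}_{\mathbb{C}}')(\rho_{{\mathfrak{l}}'} + ({\mathfrak{a}}_{\mathbb{C}}')^{\vee})$. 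Alternatively, Lemma \ref{lem:ShGK} realizes $f$ directly as a $G'$-invariant function on $G/K \times G'/K'$, and one can apply Harish-Chandra on each factor simultaneously. The main technical obstacle is the bookkeeping for the $\rho$-shifts: one must carefully reconcile the Harish-Chandra isomorphism for $\ZG{G}$ (shifted by the full $\rho_{\mathfrak{g}}$) with the Harish-Chandra isomorphism for the symmetric space $\mathbb{D}(G/K)$ (shifted only by $\rho_{\mathfrak{n}}$), so as to confirm that the image of the embedding $\HomCalg{\mathbb{D}(G/K)}{{\mathbb{C}}} \hookrightarrow \HomCalg{\ZG{G}}{{\mathbb{C}}}$ is exactly $W({\mathfrak{j}}_{\mathbb{C}})(\rho_{\mathfrak{l}} + {\mathfrak{a}}_{\mathbb{C}}^{\vee})/W({\mathfrak{j}}_{\mathbb{C}})$, with ${\mathfrak{t}}_{\mathbb{C}}^{\vee}$-component locked to $\rho_{\mathfrak{l}}$ rather than some other translate.
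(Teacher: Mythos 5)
Your route is genuinely different from the paper's. The paper realizes $\operatorname{Sh}(\lambda,\nu)$ as $\operatorname{diag}G'$-invariants in $C^{\infty}((G\times G')/(K\times K');\chi_{\lambda,\nu}^{R})$ (Lemma \ref{lem:ShGK}) and then uses the theory of boundary values with regular singularities, filtering $V_{\lambda,\nu}$ so that each successive quotient embeds in a hyperfunction-valued spherical principal series of $G\times G'$, and finally compares $\ZG{G\times G'}$-infinitesimal characters using Proposition \ref{prop:Poisson}~(1). You instead work on each factor separately and read off the constraint directly from the eigenvalue of the invariant-differential-operator algebra $\mathbb{D}(G/K)$ via the Harish-Chandra isomorphism for symmetric spaces. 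Your version is more elementary and avoids the boundary-value machinery entirely, and your $\rho$-shift bookkeeping (pinning down $\psi_\mu\circ R=\chi_{\mu+\rho_{\mathfrak{l}}}$ by checking the trivial representation) is the right way to reconcile the two Harish-Chandra normalizations.

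However, there is a genuine gap in your step ``its action on $C^{\infty}(G/K)$ factors through the natural surjection $\ZG{G}\twoheadrightarrow\mathbb{D}(G/K)$.'' This map is \emph{not} surjective in general: as recalled in Section~\ref{subsec:DGK}, the homomorphism $R:\ZG{G}\to\mathbb{D}(G/K)$ fails to be surjective precisely for the pairs $(\mathfrak{e}_{6(-14)},\mathfrak{so}(10)+\mathbb{R})$, $(\mathfrak{e}_{6(-26)},\mathfrak{f}_{4(-52)})$, $(\mathfrak{e}_{7(-25)},\mathfrak{e}_{6(-78)}+\mathbb{R})$, and $(\mathfrak{e}_{8(-24)},\mathfrak{e}_{7(-133)}+\mathfrak{su}(2))$, and the theorem is claimed for arbitrary real reductive $G$. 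What you may conclude without surjectivity is only that $\chi_\lambda$ annihilates $\ker R=\ZG{G}\cap U(\mathfrak{g}_{\mathbb{C}})\mathfrak{k}_{\mathbb{C}}$ (because every element of $\ker R$ acts as the zero operator on $C^{\infty}(G/K)$ while $\bar f\ne 0$), hence $\chi_\lambda$ descends to a $\mathbb{C}$-algebra character of the subalgebra $R(\ZG{G})\subset\mathbb{D}(G/K)$. To upgrade this to a character $\psi_\mu$ of all of $\mathbb{D}(G/K)$ you need the additional input that $\mathbb{D}(G/K)$ is a finitely generated $R(\ZG{G})$-module (equivalently, integral over it), so that every $\mathbb{C}$-point of $\operatorname{Spec}R(\ZG{G})$ lifts to a $\mathbb{C}$-point of $\operatorname{Spec}\mathbb{D}(G/K)$; this finiteness is precisely the fact the paper invokes before taking boundary values. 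With that supplement, $\chi_\lambda=\psi_\mu\circ R=\chi_{\mu+\rho_{\mathfrak{l}}}$ and the conclusion follows. A second, more cosmetic, point: your function $g'\mapsto f(g'g_0)$ is \emph{left}-$K'$-invariant and so lives on $K'\backslash G'$ rather than $G'/K'$; after passing through $g'\mapsto(g')^{-1}$ the $L$-eigenvalue is transferred by an anti-automorphism of $\ZG{G'}$, which replaces $\nu$ by a $W(\mathfrak{j}'_{\mathbb{C}})$-conjugate and so does not affect the final constraint, but the statement as written is not quite what you get.
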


We shall give a proof of Theorem \ref{thm:nonzero}
 in Section \ref{subsec:nz}.  
\vskip 1pc

Next we consider a construction
of Shintani functions
 under the assumption \eqref{eqn:ln}.  
Suppose $\lambda \in {\mathfrak {j}}_{\mathbb{C}}^{\vee}$
 satisfies $\lambda - \rho_{{\mathfrak {l}}} \in {\mathfrak {a}}_{\mathbb{C}}
^{\vee}$.  
Then there exists $\lambda_+ \in {\mathfrak {a}}_{\mathbb{C}}^{\vee}$
 such that $\lambda_+$
 satisfies the following two conditions:
\begin{align}
  &\lambda_+ - \rho_{\mathfrak {n}}
   = w (\lambda - \rho_{\mathfrak {l}})
\,\,\text{ for some }\,\, w \in W({\mathfrak {a}}).  
\label{eqn:lmd+a}
\\
  &\operatorname{Re}
   \langle \lambda_+ - \rho_{\mathfrak {n}}, \alpha \rangle
   \ge 0
\,\,\text{ for any } \alpha \in \Sigma^+({\mathfrak {g}}, {\mathfrak {a}}).  
\label{eqn:lmd+}
\end{align}
Similarly,
 suppose $\nu \in ({\mathfrak {j}}_{\mathbb{C}}')^{\vee}$
 satisfies 
 $\nu - \rho_{{\mathfrak {l}}'} \in ({\mathfrak {a}}_{\mathbb{C}}')
^{\vee}$.  
Then there exists $\nu_- \in ({\mathfrak {a}}_{\mathbb{C}}')^{\vee}$
 satisfying the following two conditions:
\begin{align}
  &\nu_- - \rho_{\mathfrak {n}'}
   = w' (-\nu + \rho_{\mathfrak {l}'})
\,\,\text{ for some }\,\, w' \in W({\mathfrak {a}}').  
\notag
\\
  &\operatorname{Re}
   \langle \nu_- - \rho_{\mathfrak {n}'}, \alpha \rangle
   \le 0
\,\,\text{ for any } \alpha \in \Sigma^+({\mathfrak {g}}', {\mathfrak {a}}').  
\label{eqn:nu-}
\end{align}

\begin{theorem}
\label{thm:ps}
Suppose that 
$
    \lambda \in {\mathfrak {j}}_{\mathbb{C}}^{\vee}
$
 and 
$
   \nu \in ({\mathfrak {j}}_{\mathbb{C}}')^{\vee}
$
 satisfy 
$
   \lambda + \rho_{{\mathfrak {l}}} 
   \in {\mathfrak {a}}_{\mathbb{C}}^{\vee}
$
 and 
$
   \nu+ \rho_{{\mathfrak {l}}'} 
\in ({\mathfrak {a}}_{\mathbb{C}}')^{\vee}
$.  
Let $\lambda_+$ and $\nu_-$
 be defined as above.  
\begin{enumerate}
\item[{\rm{1)}}]
There is a natural injective linear map
\begin{equation}
\label{eqn:ps}
\invHom{G'}{C^{\infty}(G/P;\lambda_+)}{C^{\infty}(G'/P';\nu_-)}
 \hookrightarrow
 \operatorname{Sh}_{\operatorname{mod}}(\lambda,\nu).  
\end{equation}
\item[{\rm{2)}}]
If $G$, $G'$ are classical groups,
 then \eqref{eqn:ps} is a bijection:
\begin{equation}
\label{eqn:bs}
\invHom{G'}{C^{\infty}(G/P;\lambda_+)}{C^{\infty}(G'/P';\nu_-)}
 \overset {\sim} \to 
 \operatorname{Sh}_{\operatorname{mod}}(\lambda,\nu).  
\end{equation}
\end{enumerate}
\end{theorem}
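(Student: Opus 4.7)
The plan is to build \eqref{eqn:ps} by specializing Proposition \ref{prop:break} to the unramified principal series, and to invert it in the classical case via boundary value theory on the Riemannian symmetric space. For part (1), I take $\pi^{\infty}=C^{\infty}(G/P;\lambda_+)$ and $\tau^{\infty}=C^{\infty}(G'/P';\nu_-)$, with the constant functions as nonzero spherical vectors. The hypothesis \eqref{eqn:lmd+a} together with the Harish-Chandra isomorphism of Section \ref{subsec:HC} identifies the $\ZG{G}$-infinitesimal character of $\pi^{\infty}$ with the $W({\mathfrak{j}}_\mathbb{C})$-orbit of $\lambda$, since $w \in W({\mathfrak{a}})$ fixes $\rho_{\mathfrak{l}} \in {\mathfrak{t}}_\mathbb{C}^{\vee}$; the analogous computation shows that $\tau^{\infty}$ has $\ZG{G'}$-infinitesimal character $-\nu$ in the normalization of Proposition \ref{prop:break}. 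That proposition then attaches a Shintani function in $\operatorname{Sh}_{\operatorname{mod}}(\lambda,\nu)$ to each $T$, which is the map \eqref{eqn:ps}. Injectivity follows from Proposition \ref{prop:break}(2) once both spherical vectors are cyclic, which is a theorem of Kostant: the chamber condition \eqref{eqn:lmd+} places $\lambda_+$ in the closed positive Weyl chamber of $\Sigma^+({\mathfrak{g}},{\mathfrak{a}})$ and forces ${\bf 1}_{\pi}$ to generate $\pi_K$ over $U({\mathfrak{g}}_\mathbb{C})$, and the dual condition \eqref{eqn:nu-} on $\nu_-$, transferred to the contragredient spherical principal series $(\tau^{\vee})_{K'}$, gives cyclicity of ${\bf 1}_{\tau^{\vee}}$.

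For part (2), I invert \eqref{eqn:ps} by passing to the Riemannian symmetric space. By Lemma \ref{lem:ShGK}, $\operatorname{Sh}_{\operatorname{mod}}(\lambda,\nu)$ is identified with a space of $G'$-invariant, $(K \times K')$-spherical, moderate-growth joint eigenfunctions on $X=(G \times G')/(K \times K')$ for the algebra of invariant differential operators. Applied to the product reductive group $G \times G'$, the boundary value theory of Kashiwara--Kowata--Minemura--Okamoto--Oshima--Tanaka \cite{KKMOOT}, together with its moderate-growth refinements (the Oshima--Sekiguchi-type Poisson isomorphism), yields a canonical isomorphism between such joint eigenfunctions and smooth sections of the line bundle on the minimal boundary $Y=(G\times G')/(P \times P')$ determined by $(\lambda_+,\nu_-)$. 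Taking $\operatorname{diag}G'$-invariants and applying Frobenius reciprocity (Proposition \ref{prop:Frob}) then recovers the space $\invHom{G'}{C^{\infty}(G/P;\lambda_+)}{C^{\infty}(G'/P';\nu_-)}$, producing the inverse of \eqref{eqn:ps} and the bijection \eqref{eqn:bs}.

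The main obstacle lies in part (2): I must establish surjectivity of the moderate-growth Poisson transform for the product group, namely that every $G'$-invariant joint eigenfunction of moderate growth on $X$ actually arises as the Poisson integral of a smooth section on $Y$, and not merely as a hyperfunction-valued boundary datum. This depends on non-vanishing of the appropriate product $c$-function at $(\lambda_+,\nu_-)$, whose zero locus the chamber conditions \eqref{eqn:lmd+} and \eqref{eqn:nu-} are precisely designed to avoid, together with explicit analytic control of the boundary value map in the moderate-growth category. The classical-groups hypothesis enters here, reflecting the present scope of the boundary value machinery on Riemannian symmetric spaces.
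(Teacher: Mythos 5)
Your Part (1) is essentially the paper's argument: apply Proposition \ref{prop:break} to the unramified principal series $C^{\infty}(G/P;\lambda_+)$ and $C^{\infty}(G'/P';\nu_-)$, compute the infinitesimal characters via Proposition \ref{prop:Poisson}(1), and obtain injectivity from cyclicity of spherical vectors (Kostant, via Proposition \ref{prop:Poisson}(2)); note that on the $G'$-side one must check that the contragredient parameter $\nu_-^{\ast}=-\nu_-+2\rho_{\mathfrak{n}'}$ lands in the dominant range, which is exactly what \eqref{eqn:nu-} is designed to ensure.

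For Part (2), however, there are two substantive gaps. First, the inversion step from $\operatorname{diag}G'$-invariant distribution sections on $(G\times G')/(P\times P')$ back to the space $\invHom{G'}{C^{\infty}(G/P;\lambda_+)}{C^{\infty}(G'/P';\nu_-)}$ is \emph{not} an instance of Frobenius reciprocity (Proposition \ref{prop:Frob}). Frobenius reciprocity gives $\invHom{H}{\pi|_H}{\mathbb{C}}\simeq\invHom{G}{\pi}{C(G/H)}$, which is not the required identification. What is needed is a Schwartz kernel theorem for symmetry breaking operators, establishing that continuous $G'$-equivariant operators between smooth principal series are in bijection with $\operatorname{diag}G'$-invariant distribution kernels on the product flag variety; the nontrivial direction (that an invariant distributional kernel integrates to a continuous operator) involves automatic continuity and is precisely what the paper cites from \cite[Proposition 3.2]{xtsbon}. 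Second, your explanation of where the classical-groups hypothesis enters is incorrect: the Poisson/boundary-value machinery (\cite{KKMOOT}, \cite{OS1}) applies to \emph{all} Riemannian symmetric spaces and the nondegeneracy conditions \eqref{eqn:lmd+}, \eqref{eqn:nu-} already handle the $c$-function issues. The restriction to classical groups is needed because Shintani functions are defined as $\ZG{G\times G'}$-eigenfunctions, whereas the Poisson transform diagnoses $\mathbb{D}((G\times G')/(K\times K'))$-eigenfunctions; these coincide only when the algebra homomorphism $R:\ZG{G\times G'}\to\mathbb{D}((G\times G')/(K\times K'))$ of \eqref{eqn:ZD} is surjective, which by Helgason's theorem \cite{Helgason92} holds for classical $G$, $G'$ but fails for a short list of exceptional pairs $({\mathfrak{g}},{\mathfrak{k}})$. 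Without addressing this discrepancy, your proposed inverse map is only defined on a possibly proper subspace of $\operatorname{Sh}_{\operatorname{mod}}(\lambda,\nu)$.
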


We shall prove Theorem \ref{thm:ps}
 in Section \ref{subsec:ps}.  

\begin{remark}
\label{rem:ZGgen}
{\rm{
As the proof shows,
 the bijection \eqref{eqn:bs} holds
 for generic $(\lambda,\nu)$
 even when $G$ or $G'$
 are exceptional groups.  
}}
\end{remark}
\subsection{Invariant differential operators}
\label{subsec:DGK}
In this and next subsections,
 we give a quick review
 of the preceding results
 of harmonic analysis 
 on Riemannian symmetric spaces.  
We denote by ${\mathbb{D}}(G/K)$
 the ${\mathbb{C}}$-algebra
 consisting of all $G$-invariant differential operators
 on the Riemannian symmetric space
 $G/K$.  
It is isomorphic
 to a polynomial ring 
 of $(\dim_{\mathbb{R}}{\mathfrak {a}})$-generators.  
More precisely,
 let $\gamma':U({\mathfrak {g}}_{\mathbb{C}})
 \to U({\mathfrak {a}}_{\mathbb{C}})
 = S({\mathfrak {a}}_{\mathbb{C}})$
 be the projection
 to the second factor
 of the decomposition 
$
   U({\mathfrak {g}}_{\mathbb{C}})
   =
   ({\mathfrak {k}}_{\mathbb{C}} U({\mathfrak {g}}_{\mathbb{C}})
    + 
    U({\mathfrak {g}}_{\mathbb{C}}) {\mathfrak {n}}_{\mathbb{C}})
    \oplus U({\mathfrak {a}}_{\mathbb{C}}).  
$
Then we have the Harish-Chandra isomorphism
\begin{equation}
\label{eqn:HCiso}
   {\mathbb{D}}(G/K) 
\underset R{\overset \sim \leftarrow}
    U({\mathfrak {g}}_{\mathbb{C}})^K
   / 
    U({\mathfrak {g}}_{\mathbb{C}})^K
    \cap 
    U({\mathfrak {g}}_{\mathbb{C}}){\mathfrak {k}}_{\mathbb{C}}
\underset \gamma{\overset \sim \to} 
   S({\mathfrak {a}}_{\mathbb{C}})^{W({\mathfrak {a}})},
\end{equation}
where  
$\gamma:U({\mathfrak {g}}_{\mathbb{C}})
 \to S({\mathfrak {a}}_{\mathbb{C}})$
 is defined by 
\[
  \langle \gamma(u), \lambda \rangle
  =
  \langle \gamma'(u), \lambda - \rho_{{\mathfrak {n}}}\rangle
\qquad
 \text{ for all $\lambda \in {\mathfrak {a}}_{\mathbb{C}}^{\vee}$, }
\]
which is a generalization of \eqref{eqn:HCisom}, 
 see \cite[Chapter II]{Helgason}.  
Through \eqref{eqn:HCiso}, 
 we have a bijection
\begin{equation}
\label{eqn:ZGK}
  \HomCalg{{\mathbb{D}}(G/K)}{{\mathbb{C}}}
  \simeq
  {\mathfrak {a}}_{\mathbb{C}}^{\vee}/W({\mathfrak {a}}), 
  \qquad
  \psi_{\mu} \leftrightarrow \mu, 
\end{equation}
given by 
$
  \psi_{\mu}(R_v)
  =
  \langle \gamma(v), \mu \rangle
  =
  \langle \gamma'(v), \lambda-\rho_{{\mathfrak {n}}} \rangle
$
 for 
$v \in U({\mathfrak {g}}_{\mathbb{C}})^K$.

Comparing the two bijections
$\HomCalg{\ZG G}{{\mathbb{C}}}
\simeq 
{\mathfrak {j}}_{\mathbb{C}}^{\vee}/W({\mathfrak {j}}_{\mathbb{C}})
$
 (see \eqref{eqn:ZG})
 and 
 \eqref{eqn:ZGK}
via the ${\mathbb{C}}$-algebra homomorphism
\begin{equation}
\label{eqn:ZD}
   \ZG G \subset U({\mathfrak {g}}_{\mathbb{C}})^K
         \overset R \to {\mathbb{D}}(G/K),
\end{equation}
 we have
\begin{equation}
\label{eqn:ZDGK}
\psi_{\mu} \circ R = \chi_{\mu + \rho_{{\mathfrak {n}}}}
\quad
\text{ on $\ZG G$
 for all $\mu \in {\mathfrak {a}}_{\mathbb{C}}^{\vee}$}.  
\end{equation}
By \eqref{eqn:ZDGK}, 
 we have 
\begin{equation}
\label{eqn:GKZG}
C^{\infty}(G/K;{\mathcal{M}}_{\mu})
\subset 
C^{\infty}(G/K;\chi_{\mu + \rho_{\mathfrak {l}}}^R).  
\end{equation}
For a simple Lie group $G$, 
 it is known \cite{Helgason92}
 that the ${\mathbb{C}}$-algebra homomorphism
 \eqref{eqn:ZD} is surjective 
 if and only if $({\mathfrak {g}}, {\mathfrak {k}})$
 is not one of the following pairs:
\begin{equation*}
({\mathfrak {e}}_{6(-14)}, {\mathfrak {so}}(10)+{\mathbb{R}}), \,
({\mathfrak {e}}_{6(-26)}, {\mathfrak {f}}_{4(-52)}), \,
({\mathfrak {e}}_{7(-25)}, {\mathfrak {e}}_{6(-78)}+{\mathbb{R}}),\, 
({\mathfrak {e}}_{8(-24)}, 
{\mathfrak {e}}_{7(-133)}+{\mathfrak {su}}(2)).
\end{equation*}

For ${\mathcal{F}}={\mathcal{A}}, {\mathcal{B}}$, 
 $C^{\infty}$, 
 or ${\mathcal{D}}'$, 
 we denote by ${\mathcal{F}}(G/K;{\mathcal{M}}_{\mu})$
 the space of all $F \in {\mathcal{F}}(G/K)$
 such that $F$ satisfies
 the system of the following partial
 differential equations:
\begin{equation}
   D F = \psi_{\mu} (D) F 
  \quad
  \text{ for all }\,\, D \in {\mathbb{D}}(G/K).  
\tag{${\mathcal{M}}_{\mu}$}
\end{equation}

Since the Laplacian $\Delta$ 
 on the Riemannian symmetric space $G/K$
 is an elliptic differential operator
 and belongs to ${\mathbb{D}}(G/K)$, 
 we have
\[
     {\mathcal{A}}(G/K;{\mathcal{M}}_{\mu})
     =
     {\mathcal{B}}(G/K;{\mathcal{M}}_{\mu})
     =
     C^{\infty}(G/K;{\mathcal{M}}_{\mu})
     =
     {\mathcal{D}}'(G/K;{\mathcal{M}}_{\mu})
\]
 by the elliptic regularity theorem.

\subsection{Poisson transform and boundary maps}
\label{subsec:Pb}
Given $\mu \in {\mathfrak {a}}_{\mathbb{C}}^{\vee}$, 
 we lift and extend it
 to a one-dimensional representation
 of the minimal parabolic subgroup $P=MAN$
 by 
\[
   P \to {\mathbb{C}}^{\times}, 
   \quad
   m \exp H n \mapsto e^{\langle \mu, H\rangle}
\]
 for $m \in M$, 
 $H \in {\mathfrak {a}}$, 
and $n \in N$.

\begin{remark}
\label{rem:shift}
{\rm{
In the field of harmonic analysis 
 on symmetric spaces
 people sometimes adopt the opposite signature
 of the (normalized) parabolic induction 
 which is used in the representation theory
 of real reductive groups.  
Since our definition
of parabolic induction
 does not involve the \lq\lq{$\rho$-shift}\rq\rq\
 ({\it{i.e.}}, 
 unnormalized parabolic induction
 where $\sqrt{-1}{\mathfrak {a}}^{\vee}
 + \rho_{\mathfrak {n}}$ is the unitary axis), 
 the $G$-module $C^{\infty}(G/P;\mu)$ in our notation
 corresponds to $C^{\infty}(G/P;{\mathcal{L}}_{\rho_{\mathfrak {n}}-\mu})$
 in \cite{Helgason,OS1}.  
}}
\end{remark}
With this remark in mind,
 we summarize some known results
 that we need:
\begin{proposition}
\label{prop:Poisson}
\begin{enumerate}
\item[{\rm{1)}}]
The $({\mathfrak {g}}, K)$-module
$E(G/P; \mu)$ has $\ZG G$-infinitesimal character 
 $\mu + \rho_{{\mathfrak {g}}}
  = \mu + \rho_{{\mathfrak {n}}} + \rho_{{\mathfrak {l}}}
  \in {\mathfrak {j}}_{\mathbb{C}}^{\vee}/W({\mathfrak {j}}_{\mathbb{C}})$.  

\item[{\rm{2)}}]
The $({\mathfrak {g}}, K)$-module
 $E(G/P; \mu)$ is spherical 
 for all $\mu \in {\mathfrak {a}}_{\mathbb{C}}^{\vee}$.  
Furthermore,
 the unique {\rm{(}}up to scalar{\rm{)}} non-zero spherical vector
 is cyclic
 if $\mu$ satisfies
\[
 \operatorname{Re} 
 \langle \mu-\rho_{\mathfrak {n}}, \alpha\rangle
  \ge 0
  \,\,
\text{ for any }\,\,
  \alpha \in \Sigma^+({\mathfrak {g}}, {\mathfrak {a}}).  
\]
\item[{\rm{3)}}]
For all $\mu \in {\mathfrak {a}}_{\mathbb{C}}^{\vee}$, 
 the Poisson transform ${\mathcal{P}}_{\mu}$ maps
 into the space of joint eigenfunctions
 of the ${\mathbb{C}}$-algebra ${\mathbb{D}}(G/K)$:
\begin{equation}
\label{eqn:BA}
   {\mathcal{P}}_{\mu}:
   {\mathcal{B}}(G/P;\mu)
   \to 
   {\mathcal{A}}(G/K;{\mathcal{M}}_{\rho_{\mathfrak {n}}-\mu}).  
\end{equation}
\item[{\rm{4)}}]
The Poisson transform \eqref{eqn:BA}
  is bijective
 if $\mu$ satisfies
\begin{equation}
\label{eqn:neg}
 \operatorname{Re} 
 \langle \mu-\rho_{\mathfrak {n}}, \alpha\rangle
  \le 0
  \,\,
\text{ for any }\,\,
  \alpha \in \Sigma^+({\mathfrak {g}}, {\mathfrak {a}}).  
\end{equation}
\item[{\rm{5)}}]
The Poisson transform ${\mathcal{P}}_{\mu}$ induces a bijection
\begin{equation*}
   {\mathcal{P}}_{\mu}:
   {\mathcal{D}}'(G/P;\mu)
   \to 
   {\mathcal{A}}_{\operatorname{mod}}(G/K;{\mathcal{M}}_{\rho_{\mathfrak {n}}-\mu})
\end{equation*}
if \eqref{eqn:neg} is satisfied.  
\end{enumerate}
\end{proposition}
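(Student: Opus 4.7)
The plan is to treat the five parts as largely independent statements, each an instance or adaptation of a classical result in harmonic analysis on Riemannian symmetric spaces (Harish-Chandra, Helgason, Kostant, Kashiwara--Kowata--Minemura--Okamoto--Oshima--Tanaka, and Oshima--Sekiguchi), with careful attention to the sign/shift conventions flagged in Remark~\ref{rem:shift}. Since most of these facts are classical under the ``normalized'' convention, the bulk of the proof will consist of verifying how the ``unnormalized'' line bundle $\mathcal{L}_\mu$ used here translates eigenvalues and positivity conditions.

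For (1), I would evaluate $u \in \ZG G$ on the canonical $P$-equivariant generator of $E(G/P;\mu)$ at the identity coset using the triangular decomposition from Section~\ref{subsec:HC}. The ${\mathfrak n}_{\mathbb C}^+$-part annihilates the induced line (since $\mu$ is trivial on ${\mathfrak n}$), the ${\mathfrak n}_{\mathbb C}^-$-factor vanishes upon pairing with the line at the basepoint, and the ${\mathfrak j}_{\mathbb C}$-part acts by evaluation at $\mu$ (extended trivially from ${\mathfrak a}_{\mathbb C}^\vee$ to ${\mathfrak j}_{\mathbb C}^\vee$ by zero on ${\mathfrak t}$). The $\rho_{\mathfrak g}$-shift built into the Harish-Chandra normalization \eqref{eqn:HCisom} then produces exactly the infinitesimal character $\chi_{\mu + \rho_{\mathfrak g}}$, which equals $\chi_{\mu + \rho_{\mathfrak n} + \rho_{\mathfrak l}}$ as stated.

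For (2), the Iwasawa decomposition $G = KAN$ and the identification $K/M \simeq G/P$ produce a canonical non-zero $K$-fixed element ${\bf 1}_\mu^\pi \in E(G/P;\mu)$, unique up to scalar because $M$ acts trivially on the induced line. Cyclicity of ${\bf 1}_\mu^\pi$ under $\operatorname{Re}\langle \mu - \rho_{\mathfrak n}, \alpha\rangle \ge 0$ is Kostant's classical theorem on the $(\mathfrak{g},K)$-submodule generated by the spherical vector in a spherical principal series: in this half-space, the long intertwining operator into the opposite induced module is defined and non-degenerate on the spherical line, forcing $U({\mathfrak g}_{\mathbb C}){\bf 1}_\mu^\pi$ to exhaust $E(G/P;\mu)$. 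I would cite this rather than re-derive it. For (3), the Poisson transform ${\mathcal P}_\mu$ is, by construction via Example~\ref{ex:Poisson} and Proposition~\ref{prop:Poisson-1}, a $G$-intertwining operator, hence commutes with the right $\ZG G$-action on both sides. Combining the infinitesimal character from (1) with the translation identity $\psi_\eta \circ R = \chi_{\eta + \rho_{\mathfrak n}}$ of \eqref{eqn:ZDGK} and the sign convention of Remark~\ref{rem:shift} forces the ${\mathbb D}(G/K)$-eigenvalue on the image to be $\psi_{\rho_{\mathfrak n} - \mu}$, so ${\mathcal P}_\mu f \in {\mathcal A}(G/K; {\mathcal M}_{\rho_{\mathfrak n} - \mu})$.

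For (4) and (5) I would invoke deep theorems as black boxes. Part (4) is Helgason's conjecture for hyperfunction boundary values, proved by Kashiwara--Kowata--Minemura--Okamoto--Oshima--Tanaka \cite{KKMOOT} using regular-singularities analysis on the boundary of the compactification of $G/K$; the regularity hypothesis \eqref{eqn:neg} is precisely the condition guaranteeing non-vanishing of Harish-Chandra's $c$-function (up to the sign translation of Remark~\ref{rem:shift}) and hence bijectivity. Part (5) is the distribution-valued counterpart due to Oshima--Sekiguchi, later refined by Ban--Schlichtkrull, identifying distributional boundary values on $G/P$ with moderate-growth joint eigenfunctions on $G/K$. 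The main obstacle is clearly (4), whose proof rests on substantial microlocal and $D$-module machinery far beyond what can be reproduced here; parts (1)--(3) reduce to careful bookkeeping with the two Harish-Chandra isomorphisms \eqref{eqn:HCisom} and \eqref{eqn:HCiso}, the $\rho$-shifts between them, and the sign conversion between our conventions and Helgason's.
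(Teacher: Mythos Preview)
Your approach matches the paper's: both treat this proposition as a compendium of classical results, citing Kostant for (2), Helgason for (3), Kashiwara \emph{et al.}\ \cite{KKMOOT} for (4), and Oshima--Sekiguchi for (5), with (1) being elementary bookkeeping. Your write-up is in fact more detailed than the paper's own proof, which is essentially a list of references.

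Two small points are worth tightening. First, your argument for (3) deduces the eigenvalue on the image only via the $\ZG G$-action and \eqref{eqn:ZDGK}; but the claim concerns the full algebra $\mathbb{D}(G/K)$, and the paper explicitly notes that $R:\ZG G \to \mathbb{D}(G/K)$ fails to be surjective for four exceptional Riemannian pairs. The usual remedy (Helgason) is either to differentiate the integral kernel directly, or to observe that for each $D \in \mathbb{D}(G/K)$ the $G$-maps $D \circ \mathcal{P}_\mu$ and $\psi_{\rho_{\mathfrak n}-\mu}(D)\,\mathcal{P}_\mu$ agree on the cyclic spherical vector. Second, for (5) the paper inserts a remark you omit: for $f \in \mathcal{A}(G/K;\mathcal{M}_\eta)$, moderate growth in the sense of Definition~\ref{def:mod} is equivalent to at-most-exponential growth $|f(x)| \le C\|x\|^d$. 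This identification is needed because the cited theorems of Oshima--Sekiguchi and Wallach are stated for the latter growth condition, not literally for $\mathcal{A}_{\operatorname{mod}}$ as defined here.
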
 

\begin{proof}
The first statement is elementary.  
See Kostant \cite{Kostant}
for (2), 
 and Helgason \cite{Helgason76}
 for (3).  
The fourth statement was proved 
 in Kashiwara {\it{et.al.}} \cite{KKMOOT}
 by using the theory of regular singularity 
 of a system of partial differential equations.  
For the proof of the fifth statement,
 see Oshima--Sekiguchi \cite{OS1}
 or Wallach \cite[Theorem 11.9.4]{WaI}.   
We note
 that for $f \in {\mathcal{A}}(G/K;{\mathcal{M}}_{\rho_{{\mathfrak {n}}}-\mu})$, 
$f$ has moderate growth 
 (Definition \ref{def:mod})
 if and only if
 $f$ has at most exponential growth
 in the sense that
 there exist constants
 $d \in {\mathbb{R}}$
 and $C >0$
 such that
$
  |f(x)| \le C\| x \|^d
$
 for all 
$
  x \in G.  
$  
\end{proof}

\subsection{Proof of Theorem \ref{thm:nonzero}}
\label{subsec:nz}
In Lemma \ref{lem:ShPP}, 
 we realized the Shintani space $\operatorname{Sh} (\lambda,\nu)$
 in $C^{\infty}((G \times G')/\operatorname{diag}G')$.  
We give another realization
 of $\operatorname{Sh} (\lambda,\nu)$:
\begin{lemma}
\label{lem:ShGK}
The multiplication map
\[
 \psi: G \times G' \to G, 
  \quad
 (g,g')\mapsto (g')^{-1}g
\]
 induces the following bijection:
\begin{equation}
\label{eqn:ShGK}
  \psi^{\ast}:
  \operatorname{Sh} (\lambda,\nu)
  \overset \sim \to 
  C^{\infty}((G \times G')/(K \times K');\chi_{\lambda,\nu}^R)
^{\operatorname{diag}G'}.  
\end{equation}
\end{lemma}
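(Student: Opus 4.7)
The plan is to exhibit $\psi^{\ast}$ as a linear isomorphism with explicit inverse $F \mapsto (g \mapsto F(g, e))$. The geometric point I would rely on is that $\psi$ descends to a bijection $\operatorname{diag} G' \backslash (G \times G') \overset{\sim}{\to} G$, $[(g, g')] \mapsto (g')^{-1} g$, under which the right $(K \times K')$-action on $G \times G'$ becomes the $(K' \times K)$-action $x \mapsto (k')^{-1} x k$ on $G$. At the level of bi-invariant smooth functions (ignoring the differential equation conditions), this already identifies the underlying function space of the target with that of $\operatorname{Sh}(\lambda, \nu)$, handling condition (1) of Definition \ref{def:Sh} together with the $(K \times K')$-invariance and $\operatorname{diag} G'$-invariance on the target side.

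Next I would establish the pair of identities, for $u \in U(\mathfrak{g}_{\mathbb{C}})$ and $v \in U(\mathfrak{g}'_{\mathbb{C}})$,
\[
R_{(u, 0)} \psi^{\ast} f = \psi^{\ast}(R_u f), \qquad R_{(0, v)} \psi^{\ast} f = \psi^{\ast}(L_v f).
\]
The first is immediate from the definition. For the second, the Lie algebra computation
\[
\frac{d}{dt}\bigg|_{t=0} f\bigl((g' \exp(tY))^{-1} g\bigr) = \frac{d}{dt}\bigg|_{t=0} f\bigl(\exp(-tY) (g')^{-1} g\bigr)
\]
matches the sign convention in \eqref{eqn:LR} precisely, and the extension from $\mathfrak{g}'$ to $U(\mathfrak{g}'_{\mathbb{C}})$ will be automatic because both $L$ on $G$ and $R$ on the $G'$-factor of $G \times G'$ are algebra homomorphisms. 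Specializing to $u \in \ZG{G}$ and $v \in \ZG{G'}$ will then translate conditions (2) and (3) of Definition \ref{def:Sh} into the right $\ZG{G \times G'}$-eigenfunction property with infinitesimal character $(\lambda, \nu)$.

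Putting these pieces together, $\psi^{\ast}$ will take values in the asserted target, and injectivity will be immediate because $\psi$ is surjective (take $g' = e$). For surjectivity I would take any $F$ in the target, define $f(g) := F(g, e)$, and use $\operatorname{diag} G'$-invariance to conclude $F(g, g') = F((g')^{-1} g, e) = \psi^{\ast} f(g, g')$; the three Shintani conditions on $f$ will then follow by reading the identities above backwards. The only real hurdle is keeping track of the sign flip in the second differential identity and the order of noncommuting operators when passing from $\mathfrak{g}'$ to $U(\mathfrak{g}'_{\mathbb{C}})$; once that bookkeeping is settled, the remaining verifications are formal.
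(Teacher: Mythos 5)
Your proposal is correct and follows essentially the same route as the paper's proof: identify $\psi^{\ast}$ as a bijection on underlying function spaces via the quotient $\operatorname{diag}G'\backslash(G\times G')\simeq G$, and then transport the differential equations using the identities $R_{(u,0)}\psi^{\ast}f=\psi^{\ast}(R_u f)$ and $R_{(0,v)}\psi^{\ast}f=\psi^{\ast}(L_v f)$, which the paper records as $R_X R_Y(\psi^{\ast}f)=\psi^{\ast}(R_X L_Y f)$. Your sign bookkeeping for the left-translation computation is right, and your explicit inverse $F\mapsto F(\cdot,e)$ merely makes the surjectivity step the paper takes for granted a bit more concrete.
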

\begin{proof}
We set
$
   C^{\infty}(K' \backslash G/K)
   :=
   \{f \in C^{\infty}(G)
    :
     f(k'gk)=f(g)
     \quad
     \text{for all }\,\,
     k' \in K'
     \,\,\text{and}\,\, k \in K\}.  
$
The pull-back $\psi^{\ast}$ 
 of functions induces
 a bijective linear map
\begin{equation*}
\begin{array}{cccc}
&C^{\infty}(G) &\,\,\overset \sim \to \,\,&C^{\infty}(G \times G')^{\operatorname{diag}G'}
\\
& \cup         &                & \cup
\\
&C^{\infty}(K' \backslash G/K) 
&\,\,\underset{\psi^{\ast}}{\overset \sim \to}\,\, 
&C^{\infty}((G\times G')/(K \times K'))^{\operatorname{diag}G'}.  
\end{array}
\end{equation*}
On the other hand,
 for $X \in {\mathfrak {g}}$
 and $Y \in {\mathfrak {g}}'$, 
 we have
\[
 R_X R_Y (\psi^{\ast} f)
 =
 \psi^{\ast} (R_X L_Y f).  
\]
Thus Lemma \ref{lem:ShGK} is proved.  
\end{proof}

\begin{proof}
[Proof of Theorem \ref{thm:nonzero}]
Suppose $\operatorname{Sh} (\lambda,\nu) \ne \{0\}$.  
Then, 
by Lemma \ref{lem:ShGK}, 
we have
\[
   V_{\lambda,\nu}
   :=
   C^{\infty}((G \times G')/(K \times K');\chi_{\lambda,\nu}^R)
   \ne \{0\}.  
\]
Since $R(\ZG{G \times G'})$ is an ideal
 in ${\mathbb{D}}((G \times G')/(K \times K'))$
 of finite-codimension,
 we can take the boundary values
 of $V_{\lambda,\nu}$
 inductively to the hyperfunction-valued
 principal series representations
 of $G \times G'$
 as in \cite[Section 2]{xtoshitoshima}.  
To be more precise,
 there exist $\mu_1, \cdots, \mu_N
 \in {\mathfrak {a}}_{\mathbb{C}}^{\vee}
 \times ({\mathfrak {a}}_{\mathbb{C}}')^{\vee}$
 and $(G \times G')$-invariant subspaces
\[
  \{0\} =V(0) \subset V(1) \subset \cdots \subset 
  V(N) = V_{\lambda,\nu}
\]
 such that the quotient space
 $V(j)/V(j-1)$ is isomorphic to a subrepresentation
 of the spherical principal series representation
 ${\mathcal{B}}((G \times G')/(P \times P');\mu_j)$
 as $(G \times G')$-modules.  

Comparing the $\ZG{G \times G'}$-infinitesimal characters
 of $V_{\lambda,\nu}$ 
 and ${\mathcal{B}}((G \times G')/(P \times P');\mu_j)$, 
 we get Theorem \ref{thm:nonzero}.  
\end{proof}

\subsection{Proof of Theorem \ref{thm:ps}}
\label{subsec:ps}
\begin{proof}
[Proof of Theorem \ref{thm:ps}]
1)\enspace
Since $\lambda_+$ satisfies \eqref{eqn:lmd+}, 
 the $({\mathfrak {g}}, K)$-module
 $E(G/P;\lambda_+)$ contains 
 a cyclic spherical vector
 by Proposition \ref{prop:Poisson}.  
Similarly,
 the $({\mathfrak {g}}', K')$-module
\[
   E(G'/P';\nu_-)_{K'}^{\vee} \simeq E(G'/P';\nu_-^{\ast})_{K'}
\]
 has a cyclic vector
 because $\nu_-^{\ast}=- \nu_- + 2 \rho_{{\mathfrak {n}}'}$
 satisfies
\[
  \operatorname{Re}
  \langle
  \nu_-^{\ast}-\rho_{{\mathfrak {n}}'}, \alpha
  \rangle
  \ge 0
  \quad
  \text{for any $\alpha \in \Sigma^+({\mathfrak {g}}', {\mathfrak {a}}')$}
\]
 by \eqref{eqn:nu-}.  
Hence the first statement follows from Proposition \ref{prop:break}.  

2)\enspace
In view of the definition
 of moderate growth
 (Definition \ref{def:mod}), 
 we see 
that the bijection $\psi^{\ast}$
 in \eqref{eqn:ShGK}
 induces the following bijection:
\begin{equation}
\label{eqn:ShA}
   \operatorname{Sh}_{\operatorname{mod}}(\lambda,\nu)
  \overset \sim \to 
  C_{\operatorname{mod}}^{\infty}
  ((G \times G')/(K \times K');\chi_{\lambda,\nu}^R)^{\operatorname{diag}G'}.  
\end{equation}

Since the ${\mathbb{C}}$-algebra homomorphism
 $R:\ZG{G \times G'} \to {\mathbb{D}}(G \times G'/K \times K')$
 is surjective 
 for classical groups $G$ and $G'$, 
 the isomorphism \eqref{eqn:ShA}
 implies the following bijection 
\[
   \operatorname{Sh}_{\operatorname{mod}}(\lambda,\nu)
  \simeq 
  C_{\operatorname{mod}}^{\infty}
  ((G \times G')/(K \times K'); {\mathcal{M}}_{(\lambda+\rho_{\mathfrak{l}}, \nu+\rho_{{\mathfrak{l}}'})})^{\operatorname{diag}G'}
\]
by \eqref{eqn:GKZG}.  
In turn,
 combining with the Poisson transform, 
 we have obtained the following natural isomorphism
 by Proposition \ref{prop:Poisson} (5):
\[
   \operatorname{Sh}_{\operatorname{mod}}(\lambda,\nu)
  \simeq 
  {\mathcal{D}}'
  ((G \times G')/(P \times P');
   \lambda_+^{\ast} \boxtimes \nu_-)^{\operatorname{diag}G'}.  
\]

By \cite[Proposition 3.2]{xtsbon},
 we proved the following natural bijection: 
\[
\invHom{G'}{C^{\infty}(G/P;\lambda_+)}{C^{\infty}(G'/P';\nu_-)}
\simeq
{\mathcal{D}}'((G \times G')/(P \times P');
 \lambda_+^{\ast} \boxtimes \nu_-)^{\operatorname{diag}G'}.  
\]
Hence we have completed the proof
 of Theorem \ref{thm:ps}.
\end{proof}

\section{Shintani functions for 
 $(O(n+1,1),O(n,1))$}
\label{sec:sbon}
It has been an open problem
 to find $\dim_{\mathbb{C}}\operatorname{Sh} (\lambda,\nu)$
 in the Archimedean case
 (\cite[Remark 5.6]{murasesugano}).  
In this section,
 by using a classification
of symmetry breaking operators
 between spherical principal series representations
 of the pair $(G,G')=(O(n+1,1),O(n,1))$
 in a recent work 
 \cite{xtsbon}
 with B. Speh, 
 we determine the dimension
 of $\operatorname{Sh} (\lambda,\nu)$
 in this case.

We denote by $[x]$
 the greatest integer
 that does not exceed $x$.  
For the pair
 $(G,G')=(O(n+1,1),O(n,1))$, 
 $(\ZG G,\ZG{G'})$-infinitesimal characters
 $(\lambda,\nu)$ are parametrized
 by 
\[
  {\mathfrak{j}}_{\mathbb{C}}^{\vee}
  /
  W({\mathfrak{j}}_{\mathbb{C}})
  \times
  ({\mathfrak{j}}_{\mathbb{C}}')^{\vee}
  /
  W({\mathfrak{j}}_{\mathbb{C}}')
  \simeq
  {\mathbb{C}}^{[\frac{n+2}{2}]}/W_{[\frac{n+2}{2}]}
  \times 
  {\mathbb{C}}^{[\frac{n+1}{2}]}/W_{[\frac{n+1}{2}]}
\]
 in the standard coordinates,
 where $W_k :={\mathfrak{S}}_k \ltimes ({\mathbb{Z}}/2{\mathbb{Z}})^k$.  

\begin{theorem}
\label{thm:sbon}
Let $(G,G')=(O(n+1,1),O(n,1))$.  
\begin{enumerate}
\item[{\rm{1)}}]
The following three conditions on $(\lambda,\nu)$ are equivalent:
\begin{enumerate}
\item[{\rm{(i)}}]
$\operatorname{Sh} (\lambda,\nu) \ne \{0\}$.  
\item[{\rm{(ii)}}]
$\operatorname{Sh}_{\operatorname{mod}}(\lambda,\nu)\ne \{0\}$.
\item[{\rm{(iii)}}]
In the standard coordinates
\begin{align}
\lambda&=w(\frac{n}{2}+t,\frac{n}{2}-1,\frac{n}{2}-2,\cdots,\frac{n}{2}-[\frac{n}{2}]), 
\label{eqn:solmd}
\\
\nu&=w'(\frac{n-1}{2}+s,\frac{n-1}{2}-1,\cdots,\frac{n-1}{2}-[\frac{n-1}{2}]), 
\notag
\end{align}
for some $t$, $s$ $\in {\mathbb{C}}$,
 $w \in W_{[\frac {n+2} 2]}$, 
 and $w' \in W_{[\frac {n+1} 2]}$.  
\end{enumerate}
\item[{\rm{2)}}]
If $(\lambda,\nu)$ satisfies {\rm{(iii)}} in {\rm{(1)}}, 
then 
\[
\dim_{\mathbb{C}} \operatorname{Sh}_{\operatorname{mod}}(\lambda,\nu)
=1.  
\]
\end{enumerate}
\end{theorem}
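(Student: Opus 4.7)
The strategy is to combine the bijection between Shintani spaces and spaces of symmetry breaking operators (Theorem \ref{thm:ps}(2), valid because $O(n+1,1)$ and $O(n,1)$ are classical) with the classification of symmetry breaking operators between spherical principal series for the pair $(O(n+1,1),O(n,1))$ carried out in the joint work \cite{xtsbon} with B.~Speh.

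The implication (ii) $\Rightarrow$ (i) is immediate from $\operatorname{Sh}_{\operatorname{mod}}(\lambda,\nu)\subset \operatorname{Sh}(\lambda,\nu)$. For (i) $\Rightarrow$ (iii), I would apply Theorem \ref{thm:nonzero}, which forces $\lambda \in W({\mathfrak{j}}_{\mathbb{C}})(\rho_{{\mathfrak{l}}}+{\mathfrak{a}}_{\mathbb{C}}^{\vee})$ and similarly for $\nu$. For $\mathfrak{g}=\mathfrak{o}(n+1,1)$ of real rank one, the Levi part is $\mathfrak{m}\cong\mathfrak{o}(n)$ and $\rho_{\mathfrak{n}}=\tfrac{n}{2}$, so expressing $\rho_{\mathfrak{l}}+\mathfrak{a}_{\mathbb{C}}^{\vee}$ in the standard coordinates of ${\mathfrak{j}}_{\mathbb{C}}^{\vee}$ yields precisely the affine line $(\tfrac{n}{2}+t,\tfrac{n}{2}-1,\ldots,\tfrac{n}{2}-[\tfrac{n}{2}])$ of \eqref{eqn:solmd}. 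The analogous computation for $\mathfrak{o}(n,1)$ gives the formula for $\nu$. This is an elementary but careful bookkeeping exercise in the root-system data.

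For (iii) $\Rightarrow$ (ii) together with the dimension formula in part (2), I would, given $(\lambda,\nu)$ as in \eqref{eqn:solmd}, choose representatives $\lambda_+\in\mathfrak{a}_{\mathbb{C}}^{\vee}$ and $\nu_-\in(\mathfrak{a}_{\mathbb{C}}')^{\vee}$ inside the Weyl-group orbits satisfying the dominance/antidominance conditions \eqref{eqn:lmd+} and \eqref{eqn:nu-}; in real rank one these reduce to single inequalities on the free parameters $t$ and $s$, so such representatives always exist. Theorem \ref{thm:ps}(2) then provides the isomorphism
\[
\operatorname{Sh}_{\operatorname{mod}}(\lambda,\nu)\;\simeq\;\invHom{G'}{C^{\infty}(G/P;\lambda_+)}{C^{\infty}(G'/P';\nu_-)},
\]
identifying the Shintani space with a Hom space between spherical principal series of $O(n+1,1)$ and $O(n,1)$, each parametrized by a single complex parameter. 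The main classification of \cite{xtsbon} enumerates all such symmetry breaking operators, and in the present regime produces a one-dimensional Hom space, giving $\dim_{\mathbb{C}}\operatorname{Sh}_{\operatorname{mod}}(\lambda,\nu)=1$ and, in particular, non-vanishing.

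The principal obstacle is the external classification in \cite{xtsbon}, which rests on a distribution-kernel realization of symmetry breaking operators followed by a meromorphic continuation and residue analysis; this is the deep analytic input. Within the current paper the only subtlety is to match the parametrization of Theorem \ref{thm:nonzero} with the principal-series parameter via \eqref{eqn:lmd+a}, and to check that the dominance regime exhausts each Weyl orbit. One further delicate point, flagged by the author in the Remark after Proposition \ref{prop:break}, is that the map from symmetry breaking operators to Shintani functions may have a kernel when the spherical vector is not cyclic; but in the present setup this is precluded by Proposition \ref{prop:Poisson}(2), so the bijection of Theorem \ref{thm:ps}(2) applies without loss.
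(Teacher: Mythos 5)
Your strategy coincides with the paper's: Theorem \ref{thm:nonzero} plus a Levi-data computation for (i) $\Rightarrow$ (iii), then Theorem \ref{thm:ps}(2) combined with the classification from \cite{xtsbon} for the dimension count and non-vanishing. However, the crucial arithmetic step is elided when you say the classification ``in the present regime produces a one-dimensional Hom space.'' By \cite[Theorem 1.1]{xtsbon}, $\dim \invHom{G'}{C^{\infty}(G/P;a)}{C^{\infty}(G'/P';b)}$ equals $1$ generically but jumps to $2$ on the discrete set $L_{\operatorname{even}} = \{(a,b) \in \mathbb{Z}^2 : a \le b \le 0,\ a \equiv b \bmod 2\}$; the proof is incomplete unless you verify that the dominant/antidominant choice avoids this set. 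The paper does so explicitly: taking $\operatorname{Re} t \ge n/2 > 0$ forces $\lambda_+ = t \not\le 0$, hence $(\lambda_+,\nu_-) = (t,s) \notin L_{\operatorname{even}}$, which is what pins the dimension at $1$. Without this observation, ``in the present regime'' is an assertion rather than an argument.

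Also, your closing paragraph slightly muddles the logic: the possible non-injectivity of the map \eqref{eqn:break} when $\mathbf{1}_\pi$ is not cyclic (the Remark after Proposition \ref{prop:break}) concerns \emph{other} representatives in the Weyl orbit; once one commits to the dominant $\lambda_+$ and antidominant $\nu_-$, Theorem \ref{thm:ps}(2) already delivers a bijection, and the Proposition \ref{prop:Poisson}(2) cyclicity input is absorbed into its proof. The remaining task is purely the $L_{\operatorname{even}}$ check above, not an independent injectivity argument.
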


\begin{proof}
It is sufficient 
 to prove the implication
 (i) $\Rightarrow$ (iii) and (2).

For $\lambda 
\in {\mathfrak {j}}_{\mathbb{C}}^{\vee}
/W({\mathfrak {j}}_{\mathbb{C}})
\simeq {\mathbb{C}}^{[\frac{n+1}{2}]}/W_{[\frac{n+1}{2}]}
$, 
 $\lambda$ belongs to $\rho_{{\mathfrak {l}}}+{\mathfrak {a}}_{\mathbb{C}}^{\vee} \mod W({\mathfrak {j}}_{\mathbb{C}})$
 if and only if 
 $\lambda$ is of the form 
 \eqref{eqn:solmd}
 for some $t \in {\mathbb{C}}$
 and $w \in W_{[\frac{n+2}{2}]}$.  
Similarly for $\nu 
\in ({\mathfrak {j}}_{\mathbb{C}}')^{\vee}
/W({\mathfrak {j}}_{\mathbb{C}}')$.  
Hence the implication (i) $\Rightarrow$ (iii) holds
 as a special case of Theorem \ref{thm:nonzero}.

Next, 
 suppose that $(\lambda,\nu)$ satisfies (iii).  
Without loss of generality,
 we may and do assume
 $\operatorname{Re} t \ge \frac n 2$
 and  
 $\operatorname{Re} s \le \frac {n-1} 2$.  
In this case the unique element $\lambda_+ \in {\mathfrak {a}}_{\mathbb{C}}$
 satisfying 
 \eqref{eqn:lmd+a} and \eqref{eqn:lmd+}
 is equal to $t$
 if we identify 
 ${\mathfrak {a}}_{\mathbb{C}}^{\vee}$
 with ${\mathbb{C}}$
 via the standard basis $\{e_1\}$
 of ${\mathfrak {a}}^{\vee}$
 such that 
 $\Sigma({\mathfrak {g}}, {\mathfrak {a}})
 =\{\pm e_1\}$.  
Similarly,
 $\nu_-=s$ via $({\mathfrak {a}}_{\mathbb{C}}')^{\vee}
 \simeq {\mathbb{C}}$.

We define a discrete subset
 of ${\mathfrak {a}}_{\mathbb{C}}^{\vee}
 \oplus ({\mathfrak {a}}_{\mathbb{C}}')^{\vee}
 \simeq {\mathbb{C}}^2$
 by 
\[
L_{\operatorname{even}}
  :=\{(a,b) \in {\mathbb{Z}}^2:
     a \le b \le 0,
     \,\,
     a \equiv b \mod 2\}.  
\]

According to \cite[Theorem 1.1]{xtsbon}, 
 we have
\[
\dim_{\mathbb{C}}
\invHom{G'}{C^{\infty}(G/P;a)}{C^{\infty}(G'/P';b)}
\simeq
\begin{cases}
1 
& \text{ if }\,\, (a,b) \in {\mathbb{C}}^2\setminus L_{\operatorname{even}}, 
\\
2 &
\text{ if }\,\, (a,b) \in L_{\operatorname{even}}.  
\end{cases}
\]

Since $(\lambda_+,\nu_-)
=(t,s) \not\in L_{\operatorname{even}}$, 
 we conclude that 
\[
\dim_{\mathbb{C}}\operatorname{Sh}_{\operatorname{mod}}(\lambda,\nu)
=
\dim_{\mathbb{C}}
\invHom{G'}{C^{\infty}(G/P;\lambda_+)}{C^{\infty}(G'/P';\nu_-)}
=1
\]
by Theorem \ref{thm:ps} (2).  
Thus Theorem \ref{thm:sbon} is proved.  
\end{proof}

\section{Concluding remarks}
\label{sec:conc}
We raise the following two related questions:
\begin{problem}
\label{prob:AB}
{\rm{
Find a condition
 on a pair
 of real reductive linear Lie groups
 $G \supset G'$
 such that the following properties (A)
 and (B)
 are satisfied.    
\begin{enumerate}
\item[{\rm{(A)}}]
All Shintani functions have moderate growth
 (Definition \ref{def:mod}), 
 namely,
 $\operatorname{Sh}_{\operatorname{mod}}(\lambda,\nu)
 =\operatorname{Sh} (\lambda,\nu)$
 for all $(\ZG G, \ZG {G'})$-infinitesimal characters
 $(\lambda,\nu)$.  
\item[{\rm{(B)}}]
The natural injective homomorphism
\begin{equation}
\label{eqn:HCrest}
\invHom{G'}{\pi^{\infty}|_{G'}}{\tau^{\infty}}
\hookrightarrow
\invHom{{\mathfrak {g}}',K'}{\pi_K}{\tau_{K'}}
\end{equation}
is bijective
 for any admissible smooth representations
 $\pi^{\infty}$ and $\tau^{\infty}$
 of $G$ and $G'$, 
respectively.  
\end{enumerate}
}}
\end{problem}
\begin{remark}
\label{rem:conj}
{\rm{
\begin{enumerate}
\item[1)]
If $G'=\{ e \}$
 then neither (A) nor (B) holds.  
\item[2)]
If $G=G'$ 
then (A) holds by the theory of
 asymptotic behaviors
 of Harish-Chandra's zonal spherical functions
 and (B) holds by the Casselman--Wallach 
theory 
 of the Fr{\'e}chet globalization
 \cite[Chapter 11]{WaI}.  
\item[3)]
If $G'=K$ then both (A) and (B) hold.  
\item[4)]
It is plausible
 that if $(G,G')$ satisfies 
 the geometric condition (PP)
 (Definition \ref{def:pp})
 then both (A) and (B) hold.  
\end{enumerate}
}}
\end{remark}
By using the argument in Sections \ref{sec:break} and \ref{sec:PS}
 on the construction of Shintani functions from 
symmetry breaking operators,
 we have the following:
\begin{proposition}
For a pair of real reductive classical Lie groups
 $G \supset G'$, 
 {\rm{(B)}} implies {\rm{(A)}}.  
\end{proposition}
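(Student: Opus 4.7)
The plan is to bracket $\operatorname{Sh}(\lambda,\nu)$ between two expressions that coincide under (B). The starting point is Theorem \ref{thm:ps} (2), which for classical $(G, G')$ gives
\[
\operatorname{Sh}_{\operatorname{mod}}(\lambda,\nu) \simeq \invHom{G'}{C^{\infty}(G/P;\lambda_+)}{C^{\infty}(G'/P';\nu_-)}.
\]
Applying hypothesis (B) to the admissible smooth representations $\pi^{\infty} = C^{\infty}(G/P;\lambda_+)$ and $\tau^{\infty} = C^{\infty}(G'/P';\nu_-)$ identifies the right-hand side with the algebraic Hom space $\invHom{\mathfrak{g}',K'}{E(G/P;\lambda_+)_K}{E(G'/P';\nu_-)_{K'}}$. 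Thus (A) will follow once we establish the analogous identification
\[
\operatorname{Sh}(\lambda,\nu) \simeq \invHom{\mathfrak{g}',K'}{E(G/P;\lambda_+)_K}{E(G'/P';\nu_-)_{K'}} \qquad (\ast)
\]
for the full Shintani space, since then the two containments $\operatorname{Sh}_{\operatorname{mod}}(\lambda,\nu) \subseteq \operatorname{Sh}(\lambda,\nu)$ collapse.

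To prove $(\ast)$ I would repeat the chain of isomorphisms used in the proof of Theorem \ref{thm:ps} (2), but replace the distribution boundary value map (Proposition \ref{prop:Poisson} (5)) by its hyperfunction analogue (Proposition \ref{prop:Poisson} (4)) throughout. Via Lemma \ref{lem:ShGK}, the full Shintani space is realized as $\operatorname{diag}G'$-invariants in $C^{\infty}((G\times G')/(K\times K'); \chi_{\lambda,\nu}^{R})$; for classical $G, G'$ the surjectivity of $R : \ZG{G\times G'} \to {\mathbb{D}}((G\times G')/(K\times K'))$ combined with \eqref{eqn:GKZG} rewrites this as $\operatorname{diag}G'$-invariants in the joint eigenspace $C^{\infty}((G\times G')/(K\times K'); {\mathcal{M}}_{\mu})$; and the Kashiwara--Kowata--Minemura--Okamoto--Oshima--Tanaka theorem (Proposition \ref{prop:Poisson} (4)) identifies the latter, without any growth hypothesis, with
\[
{\mathcal{B}}((G\times G')/(P\times P'); \lambda_+^{\ast}\boxtimes \nu_-)^{\operatorname{diag}G'}.
\]

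The hard part is the remaining link, a hyperfunction-level Frobenius reciprocity
\[
{\mathcal{B}}((G\times G')/(P\times P'); \lambda_+^{\ast}\boxtimes \nu_-)^{\operatorname{diag}G'} \simeq \invHom{\mathfrak{g}',K'}{E(G/P;\lambda_+)_K}{E(G'/P';\nu_-)_{K'}},
\]
which is the maximal-globalization counterpart of \cite[Proposition 3.2]{xtsbon}; the latter realizes the \emph{distribution}-valued invariants as smooth $G'$-intertwiners and is already the key input of Theorem \ref{thm:ps} (2). I would derive this hyperfunction Frobenius from the canonical $(G\times G')$-equivariant duality between the hyperfunction principal series ${\mathcal{B}}(G/P;\lambda_+^{\ast})$ and the analytic principal series ${\mathcal{A}}(G/P;\lambda_+)$, whose $K$-finite vectors recover $E(G/P;\lambda_+)_K$, together with an abstract Frobenius argument and the fact that any $\operatorname{diag}G'$-invariant functional on an admissible Harish-Chandra module of $G\times G'$ is automatically $K'$-finite. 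Once this identification is in place, chaining all of the above isomorphisms yields $(\ast)$ and hence (A). The only technically subtle ingredient is the last step; everything else is a direct transcription of Theorem \ref{thm:ps} (2) with distributions replaced by hyperfunctions, together with a single application of (B).
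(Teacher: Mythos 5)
Your overall strategy starts the same way the paper does, via Theorem~\ref{thm:ps}~(2) applied to the pair $\pi^{\infty}=C^{\infty}(G/P;\lambda_+)$, $\tau^{\infty}=C^{\infty}(G'/P';\nu_-)$, but the step you call $(\ast)$ is where the argument breaks. You want to prove, \emph{unconditionally}, the isomorphism
\[
\operatorname{Sh}(\lambda,\nu) \;\simeq\; \invHom{\mathfrak{g}',K'}{\pi_K}{\tau_{K'}}.
\]
What the boundary-value machinery actually gives, by the hyperfunction analogue of Theorem~\ref{thm:ps}~(2), is
\[
\operatorname{Sh}(\lambda,\nu)\;\simeq\;\invHom{G'}{\pi^{\omega}}{\tau^{\omega}},
\]
where $\pi^{\omega}$ denotes the space of real analytic vectors of $\pi^{\infty}$ (minimal globalization). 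The passage from ${\mathcal{B}}((G\times G')/(P\times P');\lambda_+^{\ast}\boxtimes \nu_-)^{\operatorname{diag}G'}$ to intertwining operators is a Schwartz--Sato kernel theorem, which by its nature produces \emph{continuous} $G'$-maps between topologized globalizations, not abstract $(\mathfrak{g}',K')$-module maps. The inclusion $\invHom{G'}{\pi^{\omega}}{\tau^{\omega}}\hookrightarrow\invHom{\mathfrak{g}',K'}{\pi_K}{\tau_{K'}}$ is in general not an equality; equating the two is an automatic-continuity statement for symmetry breaking operators on minimal globalizations, which is precisely the analytic-globalization counterpart of the very hypothesis (B) you are allowed to assume only for smooth globalizations. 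Your sketch of the ``hard direction'' --- starting from a $(\mathfrak{g}',K')$-map $T$, passing to the bilinear form $B$ on $\pi_K\otimes(\tau^{\vee})_{K'}$, and then declaring that $B$ is a hyperfunction on $(G\times G')/(P\times P')$ --- is exactly where this gap lies: a linear functional on a Harish-Chandra module need not extend continuously to the minimal globalization, and the $\operatorname{diag}K'$-fixedness (which you rephrase as ``$K'$-finiteness'') does not remedy this, since it imposes no growth control across the infinitely many $K$-types of $\pi_K$.

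The paper's own proof avoids this by working with the intermediate space $\invHom{G'}{\pi^{\omega}}{\tau^{\omega}}$ rather than collapsing to the algebraic Hom. It establishes two isomorphisms, $\operatorname{Sh}_{\operatorname{mod}}(\lambda,\nu)\simeq\invHom{G'}{\pi^{\infty}}{\tau^{\infty}}$ and $\operatorname{Sh}(\lambda,\nu)\simeq\invHom{G'}{\pi^{\omega}}{\tau^{\omega}}$, then exhibits the concrete chain of injections
\[
\invHom{G'}{\pi^{\infty}}{\tau^{\infty}}\hookrightarrow\invHom{G'}{\pi^{\omega}}{\tau^{\omega}}\hookrightarrow\invHom{\mathfrak{g}',K'}{\pi_K}{\tau_{K'}},
\]
and invokes hypothesis (B) to force the composite to be a bijection, which then forces each link to be a bijection. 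This ``sandwich'' argument uses (B) only once and never asserts that the right-hand inclusion is an equality on its own. To repair your argument, replace $(\ast)$ by the weaker (and provable) statement $\operatorname{Sh}(\lambda,\nu)\simeq\invHom{G'}{\pi^{\omega}}{\tau^{\omega}}$, and then use the chain of injections together with (B) exactly as above.
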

\begin{proof}
Let $\lambda_+$ and $\nu_-$ be as in Theorem \ref{thm:ps}.  
We denote by $\pi^{\infty}$ 
 the admissible smooth representation 
of $G$ on $C^{\infty}(G/P;\lambda_+)$
 and $\tau^{\infty}$ 
 the admissible smooth representation 
of $G'$ on $C^{\infty}(G'/P';\nu_-)$.  
Then 
 by Theorem \ref{thm:ps} (2), 
 we have the following linear isomorphism:
\[
\invHom{G'}{\pi^{\infty}|_{G'}}{\tau^{\infty}} 
\overset \sim \to 
\operatorname{Sh}_{\operatorname{mod}}(\lambda,\nu).  
\]
Similarly to the proof of  
 Theorem \ref{thm:ps} (2), 
 we have the natural bijection
\[
\invHom{G'}{\pi^{\omega}|_{G'}}{\tau^{\omega}} 
\simeq
\operatorname{Sh}(\lambda,\nu), 
\]
where $\pi^{\omega}$ is a continuous representation of $G$
 on the space of real analytic vectors
 of $\pi^{\infty}$, 
 and $\tau^{\omega}$ that of $\tau^{\infty}$.

In view of the canonical injective homomorphisms
\[
  \invHom{G'}{\pi^{\infty}|_{G'}}{\tau^{\infty}}
  \hookrightarrow
  \invHom{G'}{\pi^{\omega}|_{G'}}{\tau^{\omega}}
  \hookrightarrow
  \invHom{{\mathfrak {g}}', K'}{\pi_K}{\tau_K}, 
\]
 we see that if (B) holds, 
 then the inclusion 
\[
  \operatorname{Sh}_{\operatorname{mod}}(\lambda,\nu)
  \hookrightarrow
   \operatorname{Sh}(\lambda,\nu)
\]
is bijective.  
Hence the implication (B) $\Rightarrow$ (A)
 is proved.  
\end{proof}

\vskip 1pc
\subsection*{Acknowledgments}
Parts of the results and the idea of the proof were delivered
 in various occasions including 
 Summer School on Number Theory in Nagano (Japan)
 in 1995 organized by F. Sato,
 Distinguished Sackler Lectures
 organized by J. Bernstein at Tel Aviv University (Israel)
 in May 2007,
 the conference in honor of E. B. Vinberg's 70th birthday
 in Bielefeld (Germany) in July 2007
 organized by H. Abels, V. Chernousov, G. Margulis, D. Poguntke,
 and K. Tent, 
 \lq\lq{Lie Groups, Lie Algebras and their Representations}\rq\rq\
 in November 2011 in Berkeley (USA) organized by J. Wolf, 
  \lq\lq{Group Actions
 with Applications
 in Geometry and Analysis}\rq\rq\ 
 at Reims University
 (France) in June, 2013
 organized by J. Faraut,
 J. Hilgert,
 B. {\O}rsted, 
 M. Pevzner, 
 and B. Speh, 
 and 
 \lq\lq{Representations of Reductive Groups}\rq\rq\
 at University of Utah (USA) in July, 2013
 organized by J.~Adams, P.~Trapa, and D.~Vogan.  
The author is grateful for their warm hospitality.

The author was  partially supported
 by Grant-in-Aid for Scientific Research
 (B) (22340026)  and (A)(25247006).

\vskip 1pc


\begin{thebibliography}{99}

\bibitem{aizenbudgourevitch}
A. Aizenbud, D. Gourevitch,
Multiplicity one theorem for $(GL_{n+1}({\mathbb{R}}), GL_{n}({\mathbb{R}}))$,
Selecta Math. \textbf{15} (2009), 271--294. 

\bibitem{Berger}
M. Berger, 
Les espaces sym{\'e}triques non compacts, 
Ann. Sci. {\'E}cole Norm. Sup. {\bf{74}} (1957),
85--177.

\bibitem{CKOP}
J.-L. Clerc, T. Kobayashi,  B. \O rsted, and M. Pevzner,
\textit{Generalized Bernstein--Reznikov integrals}, 
Math. Ann. \textbf{349} (2011),  
\href{http://dx.doi.org/10.1007/s00208-010-0516-4}
{395--431}.

\bibitem{GrossPrasad}
B. Gross, D. Prasad, 
On the decomposition of a representations of $SO_n$
when restricted to $SO_{n-1}$, 
Canad.~J.~Math.~{\textbf{44}}, (1992), 974--1002.

\bibitem{HarishChandra54}
Harish-Chanrda, Representations of semisimple Lie groups II,
Trans. Amer. Math. Soc. {\textbf{76}} (1954), 26--65. 


\bibitem{Helgason76}
S. Helgason,
A duality for symmetric spaces
 with applications
 to group representations,
II, 
Adv. Math. {\bf{22}}
 (1976), 187--219.  

\bibitem{Helgason92}
S. Helgason, 
Some results on invariant differential operators
on symmetric spaces, 
Amer. J. Math., \textbf{114} (1992), 789--811.

\bibitem{Helgason}
S. Helgason, 
Groups and geometric analysis. Integral geometry, invariant
differential operators, and spherical functions, 
Mathematical Surveys and
Monographs {\bf{83}}, 
American Mathematical Society, Providence, RI, 2000. 

\bibitem{KKMOOT}
M. Kashiwara, A. Kowata, K. Minemura, K. Okamoto, T. Oshima and M. Tanaka, 
Eigenfunctions of invariant differential operators
 on a symmetric space, 
Ann. of Math. {\bf{107}} (1978), 
1--39. 


\bibitem{Kb2}
        T. Kobayashi,
        Discrete decomposability of the restriction of $A_{\mathfrak{q}}(
            \lambda)$ with respect to reductive subgroups and its applications,
        Invent. Math. 
        \textbf{117}
        (1994),
\href{http://dx.doi.org/10.1007/BF01232239}
{181--205}; 
        Part II, Ann. of Math. (2) \textbf{147} (1998), 
\href{http://dx.doi.org/10.2307/120963}
{709--729}; 
        Part III, Invent. Math. \textbf{131} (1998), 
\href{http://dx.doi.org/10.1007/s002220050203}
{229--256}.

\bibitem{xtoshi95}
T. Kobayashi, 
Introduction to harmonic analysis on real spherical homogeneous
spaces, Proceedings of the 3rd Summer School on Number Theory
gHomogeneous Spaces and Automorphic Formsh in Nagano (F. Sato,
ed.), 1995, 22--41 (in Japanese).

\bibitem{xkeastwood60}
T. Kobayashi, 
F-method for symmetry breaking operators,
Differential Geom. Appl.
{\bf{33}}
(2014), 
272--289, 
Special issue in honour of M.~Eastwood,
\href{http://dx.doi.org/10.1016/j.difgeo.2013.10.003}
{doi:10.1016/j.difgeo.2013.10.003}.  
Published online 20 November 2013, 
(available at 
\href{http://arxiv.org/abs/1303.3541}
{arXiv:1303.3545}).  

\bibitem{xtoshimatsuki}
T. Kobayashi, T. Matsuki, 
Classification of finite-multiplicity symmetric pairs,
Transform. Groups {\bf{19}} (2014), 
457--493, 
Special issue in honour of Professor Dynkin
 for his 90th birthday, 
\href{http://dx.doi.org/10.1007/s00031-014-9265-x }
{doi:10.1007/s00031-014-9265-x}
(available at 
\href{http://arxiv.org/abs/1312.4246}
{arXiv:1312.4246}).  

\bibitem{xtoshitoshima}
T. Kobayashi, T. Oshima, 
Finite multiplicity theorems for induction
and restriction, 
Adv. Math. {\bf{248}} (2013), 
\href{http://dx.doi.org/10.1016/j.aim.2013.07.015}
{921--944}, 
(available at 
\href{http://arxiv.org/abs/1108.3477}
{arXiv:1108.3477}).

\bibitem{xtoshiyoshima}
T. Kobayashi, Y. Oshima, 
Classification of symmetric pairs with
discretely decomposable restrictions of 
$({\mathfrak {g}}, K)$-modules, 
Crelles Journal, 
 published online 13 July, 2013. 
19 pp. 
\href{http://dx.doi.org/10.1515/crelle-2013-0045}
{doi:10.1515/crelle-2013-0045}.

\bibitem{xtsbon}
T. Kobayashi, B. Speh,
Symmetry breaking
 for representations of rank one
 orthogonal groups, 
 preprint, 
 131 pages, 
\href{http://arxiv.org/abs/1310.3213}
{arXiv:1310.3213};
 (announcement:
 Intertwining operators and the restriction of representations
 of rank one orthogonal groups, 
C.~R.~Acad.~Sci.~Paris, Ser.~I, {\bf{352}}
(2014), 
89--94
).

\bibitem{Kostant}
B. Kostant, 
In the existence 
 and irreducibility of certain
series of representations, Bull. Amer. Math. Soc., \textbf{75}
(1969), 627--642. 

\bibitem{Kr}
	M. Kr\"amer,
	Multiplicity free subgroups of compact connected Lie groups,
	Arch.  Math. (Basel)
	\textbf{27}
	(1976),
\href{http://dx.doi.org/DOI:10.1007/BF01224637}
{28--36}.

\bibitem{murasesugano}
A. Murase, T. Sugano, 
Shintani functions and automorphic L-functions
 for $GL(n)$. 
Tohoku Math. J. (2) {\bf{48}} (1996), no. 2, 165--202.

\bibitem{OS1}
T. Oshima, J. Sekiguchi,
Eigenspaces of invariant differential operators
 on an affine symmetric space, 
Invent. Math. {\bf{57}} (1980), 1--81. 

\bibitem{SunZhu}
B. Sun and C.-B. Zhu, 
Multiplicity one theorems: the Archimedean case,
Ann. of Math. {\bf{175}}
(2012), 
23--44.  

\bibitem{WaI}
N. R. Wallach,
Real reductive groups. I, II, 
Pure and Applied Mathematics, 
{\bf{132}} Academic Press, Inc., Boston, MA, 1988. xx+412 pp. 


\end{thebibliography}
\end{document}